\documentclass{article}

\usepackage{amsmath}
\usepackage{amsthm}
\usepackage{amsfonts}
\usepackage{mathtools}
\usepackage[numbers]{natbib}
\usepackage{xcolor}
\usepackage{caption} 

\usepackage{hyperref}
\hypersetup{colorlinks=true, linkcolor=blue, citecolor=blue}

\newcommand{\R}{\mathbb{R}}

\newcommand{\cL}{\mathcal{L}}
\newcommand{\cA}{\mathcal{A}}
\newcommand{\cS}{\mathcal{S}}
\newcommand{\cO}{\mathcal{O}}

\usepackage{bm}
\newcommand{\xb}{\bm{x}}

\newcommand{\Ab}{\bm{A}}
\newcommand{\Bb}{\bm{B}}
\newcommand{\Eb}{\bm{E}}
\newcommand{\Lb}{\bm{L}}
\newcommand{\Mb}{\bm{M}}
\newcommand{\Pb}{\bm{P}}

\newcommand{\cLtau}{\cL_{\tau}}
\newcommand{\cStau}{\cS_{\tau}}

\DeclareMathOperator{\Id}{Id}

\renewcommand{\exp}[1]{\mathrm{e}^{#1}}

\newcommand{\domain}[1]{\mathcal{D}(#1)}

\newcommand{\eps}{\varepsilon}
\renewcommand{\phi}{\varphi}

\DeclarePairedDelimiter{\iprod}{(}{)}

\DeclarePairedDelimiterXPP\lnorm[1]{}\lVert\rVert{_2}{#1}


\newcommand\MTkillspecial[1]{
    \bgroup
    \catcode`\&=9
    \let\\\relax%
    \scantokens{#1}%
    \egroup
    }
\DeclarePairedDelimiter\norm{\lVert}{\rVert}
\reDeclarePairedDelimiterInnerWrapper\norm{star}{
\mathopen{#1\vphantom{\MTkillspecial{#2}}\kern-\nulldelimiterspace\right.}
#2
\mathclose{\left.\kern-\nulldelimiterspace\vphantom{\MTkillspecial{#2}}#3}}

\DeclarePairedDelimiter\normLHH{\lVert}{\rVert_{\cL(H)}}
\DeclarePairedDelimiter\maxnorm{\lVert}{\rVert_{\infty}}

\newcommand{\diff}[1]{\mathrm{d}#1}

\newcommand{\dt}{\diff{t}}
\newcommand{\ds}{\diff{s}}
\newcommand{\dr}{\diff{r}}

\newcommand{\ddx}[1]{\frac{\mathrm{d}}{\diff{#1}}}
\newcommand{\ddjx}[2]{\frac{\mathrm{d}^{#1}}{\diff{#2}^{#1}}}
\newcommand{\dddx}[1]{\ddjx{2}{#1}}
\newcommand{\ddddx}[1]{\ddjx{3}{#1}}

\newcommand{\ddr}{\ddx{r}}
\newcommand{\dds}{\ddx{s}}
\newcommand{\dddr}{\dddx{r}}
\newcommand{\ddds}{\dddx{s}}

\newcommand{\dddds}{\ddddx{s}}

\newcommand{\SH}{\Sigma(H)}
\newcommand{\SHP}{\Sigma^+(H)}

\newcommand{\CpH}[1]{C_p\big(#1, \SH\big)}
\newcommand{\CpHk}[2]{C_p^{#1}\big(#2, \SH\big)}

\newcommand{\CHk}[2]{C^{#1}\big(#2, \SH\big)}

\numberwithin{equation}{section}
\theoremstyle{plain}
\newtheorem{theorem}{Theorem}[section]
\newtheorem{lemma}[theorem]{Lemma}
\newtheorem{corollary}[theorem]{Corollary}
\newtheorem{assumption}{Assumption}

\theoremstyle{definition}
\newtheorem{remark}{Remark}[section]

\title{Convergence analysis of Lie and Strang splitting for operator-valued differential Riccati equations}

\author{Eskil Hansen \and Tony Stillfjord \and Teodor \r{A}berg}

\begin{document}

\maketitle

\begin{abstract}
  Differential Riccati equations (DREs) are semilinear matrix- or operator-valued differential equations with quadratic non-linearities. They arise in many different areas, and are particularly important in optimal control of linear quadratic regulators, where they provide the optimal feedback control laws. In the context of control of partial differential equations, these Riccati equations are operator-valued. To approximate their solutions, both spatial and temporal discretizations are needed. While the former have been well analyzed in the literature, there are very few rigorous convergence analyses of time stepping methods applied to DREs, particularly in the infinite-dimensional, operator-valued setting. 
  In view of this, we analyze two numerical time-stepping schemes, the Lie and Strang splitting methods, in such a setting. The analysis relies on the assumption that the uncontrolled system evolves via an operator that generates an analytic semigroup, and that either the initial condition is sufficiently smooth, or the nonlinearity in the DRE is sufficiently smoothing. These assumptions are mild, in the sense that they are not enough to even guarantee continuity in operator-norm of the exact solution to the DRE. However, they imply certain regularity in a pointwise sense, which can be leveraged to prove convergence in operator-norm with the classical orders.
  The results are illustrated by four numerical experiments, where convergence with the expected order is correlated with the relevant assumptions being fulfilled.
  The experiments also demonstrate that matrix-valued DREs which arise as spatial discretizations of operator-valued DREs behave similarly, unless the discretization is coarse.
\end{abstract}

\section{Introduction} \label{sec:introduction}
A differential Riccati equation (DRE) is of the form
\begin{equation}\label{eq:DRE}
  \dot{P}(t) = A^* P(t) + P(t)A + Q - P(t)SP(t),\quad t\in (0,T],\quad P(0) = P_0,
\end{equation}
where the given data $A$, $Q$ and $S$ and the solution values $P(t)$ are either real matrices or linear operators on a Hilbert space. By $A^*$ we mean either the adjoint of $A$ if it is an operator, or its Hermitian transpose if it is a matrix. 

A typical application where DREs arise is the linear quadratic regulator (LQR) problem, where one aims to steer the state vector $x(t)$ to a desirable state by adjusting an input function $u(t)$. The state $x$ satisfies
\begin{align*}
  \dot{x} &= Ax + Bu, \\
  y &= Ex,
\end{align*}
with given matrices or operators $A$, $B$ and $E$, and the typical goal is to minimize the cost functional
\begin{equation*}
J(u) = \int_{0}^{T}{\|y(t)\|^2 + \|u(t)\|^2 \, \dt} + \norm{Gx(T)}^2,
\end{equation*}
where  $G$ is a penalization operator for the final state. It is well-known that the optimal feedback control strategy for doing so is $u(t) = -B^*P(T-t)x(t)$, where $P$ satisfies the DRE~\eqref{eq:DRE} with $Q = E^*E$, $S = BB^*$ and $P_0 = G^*G$.

In the finite-dimensional LQR case, $x(t) \in \R^N$ for some $N$ and the above constitutes control of a system of $N$ ordinary differential equations (ODEs). This leads to a matrix-valued DRE. If $x(t)$ is instead a function in, e.g., $L^2(\Omega)$, where $\Omega$ is the spatial domain, we are concerned with control of a partial differential equation (PDE) and the corresponding DRE is operator-valued. Often, a system of ODEs arises as a spatial discretization of a PDE. In this case, when the discretization is refined, the solution to the matrix-valued DRE should tend to the solution of the corresponding operator-valued DRE.

In this paper, we analyze two time-stepping methods applied to operator-valued DREs. As indicated above, there are two main reasons for focusing on operator-valued rather than matrix-valued equations. First, it allows us to ignore the spatial discretization, and thereby decouple the temporal and spatial errors. A full discretization can be acquired by subsequently discretizing the time-discrete system in space. Secondly, given that finer spatial discretizations typically result in systems with less nice properties, the operator-valued DRE also corresponds to the ``worst'' possible spatial discretization in terms of analysis. Our temporal convergence results will automatically hold for the spatially discretized matrix-valued equations under minimal assumptions, regardless of how fine or coarse these discretizations are.

As our numerical experiments show, the fact that a time-stepping method is convergent for a DRE in $R^{N \times N}$ for any $N$ does not imply that it is convergent in the limit as $N \to \infty$. Basically, the error constants may depend on $N$ in non-obvious ways, or on regularity properties that only become apparent for large $N$. This means that a large $N$ might require unfeasibly small time steps before any decrease in the temporal errors is observed, unless the problem satisfies certain properties. Our operator-valued analysis exposes these properties.

The time-stepping methods that we are concerned with here are the Lie and Strang splitting schemes. Instead of approximating the solution to the full DRE~\eqref{eq:DRE} directly, these methods work by combining approximations of the solutions to the two subproblems $\dot{P} = FP$ and $\dot{P} = GP$, where, informally, 
\begin{align*}
  F P &=  A^*P + PA + Q, \\
  G P &= -PSP.
\end{align*}
 We denote the corresponding solution operators by $\exp{tF}$ and $\exp{tG}$, respectively. Then with the approximation $P^n \approx P(n\tau)$, the Lie splitting method is given by
 \begin{equation*}
   P^{n+1} = \exp{\tau F}\exp{\tau G} P^n,
 \end{equation*}
and the Strang splitting is defined by
 \begin{equation*}
   P^{n+1} = \exp{\tau/2F}\exp{\tau G}\exp{\tau/2F} P^n.
 \end{equation*}
 How to properly interpret and define the operators $F$ and $G$, as well as the solution operators $\exp{tF}$ and $\exp{tG}$ will be covered in Section~\ref{sec:setting}.

 Many other time-stepping methods for (matrix-valued) DRE have been suggested in the literature, e.g.\ BDF methods \cite{BennerMena2004}, Rosenbrock methods~\cite{BennerMena2013}, Peer methods~\cite{BennerLang2018}, and exponential integrators~\cite{LiZhangLiu2021}. The splitting schemes considered in this paper were introduced in~\cite{Stillfjord2015}, see also~\cite{Stillfjord2017}. The same methods but with a different decomposition of the right-hand-side were considered in~\cite{MenaOstermannPfurtschellerPiazzola2018,OstermannPiazzolaWalach2018}. A common, but surprising, theme in all these works is a lack of rigorous convergence analysis. We are only aware of three exceptions: First, \cite{HansenStillfjord2014} considers an operator-valued setting, but the analyzed method does not fully take advantage of the structure of the DRE and is therefore less efficient than the methods we consider here. Secondly, the methods considered in~\cite{OstermannPiazzolaWalach2018} are similar to ours, but uses a different decomposition of the right-hand-side that typically leads to larger errors for the same step size. Additionally, the analysis is only for the matrix-valued case. Finally, the unpublished manuscript~\cite{Stillfjord2015b} considers the same methods as in this paper in an operator-valued setting. However, this particular setting is too restrictive and the analysis is therefore only applicable to very few problems.
 The main goal of the present paper is thus to remedy this situation and provide a rigorous convergence analysis for Lie and Strang splitting that is applicable also in an operator-valued setting.

We note that spatial discretizations of operator-valued DREs have been analyzed to a much greater extent than temporal discretizations. Convergence without order is shown in e.g.~\cite{Rosen1991,BennerMena2016}. As far as we are aware, the first proof of convergence with almost optimal orders in a setting similar to ours is given in~\cite{KrollerKunisch1991}. The comprehensive book~\cite{LasieckaTriggiani2000} proves convergence with orders also in more general settings, but only for \emph{algebraic} Riccati equations. These results can, however, likely be extended to DREs.

Informally, the main assumptions on~\eqref{eq:DRE} in this paper is that $A$ generates an analytic semigroup (see, e.g., \cite[Chapter 2.5]{Pazy1983}), and that $B$ and $E$ are bounded operators. This allows for applications such as distributed control. Given this, our main result is that Lie and Strang splitting are convergent with the expected orders $1$ and $2$, respectively, under certain regularity assumptions. For Lie, we (again informally) require either that the initial condition $P_0$ is in the domain of $P \mapsto A^*P + PA$, or that $S$ satisfies a similar condition. The former condition is less restrictive than the usual one, where the operator would be applied twice. This is possible due to the smoothing properties of $A$. In the latter case, $P_0$ can be arbitrarily non-smooth, but the smoothness assumption on $S$ means that the solution $P(t)$ is nevertheless sufficiently smooth. For Strang splitting, we require both the above assumptions.

A main difficulty here is that the above-mentioned regularity only holds away from $t = 0$. The solution $t \mapsto P(t)$ is not differentiable or even continuous at $t = 0$ in operator-norm, even under the above assumptions. Thus the usual Taylor expansions which could be done in the matrix-valued case are no longer valid. However, the assumptions guarantee that for any $x$, $t \mapsto P(t)x$ is either continuous or continuously differentiable, and $P$ or $\dot{P}$ is uniformly bounded. This allows us to instead expand around $t=\epsilon > 0$ and by careful manipulation of the rest terms prove that they are uniformly bounded independent of $\epsilon$.

The paper is organized as follows. In Section~\ref{sec:setting}, we discuss the abstract setting and make the above informal statements mathematically rigorous. We also state and prove some preliminary lemmas on existence and smoothness of solutions of the DRE. The convergence analysis of Lie splitting is presented in Section~\ref{sec:analysis_Lie}, and followed by the analysis of Strang splitting in Section~\ref{sec:analysis_Strang}. These results are illustrated by three numerical experiments in Section~\ref{sec:experiments}, where the different assumptions are either satisfied or not. By repeatedly refining a spatial discretization, we approach the operator-valued case. Finally, we present our conclusions and ideas for future work in Section~\ref{sec:conclusions}.

\section{Setting}\label{sec:setting}
Given a Hilbert space $X$, we use $\iprod{\cdot,\cdot}_X$ to denote its inner product, or simply $\iprod{\cdot,\cdot}$ when the particular space is clear from the context. The induced operator norm is denoted by $\norm{\cdot}_X$ or simply $\norm{\cdot}$.
Given two general Hilbert spaces $X$ and $Y$, we denote by $\cL(X,Y)$ the space of linear bounded operators from $X$ to $Y$. If $X=Y$, we abbreviate $\cL(X) = \cL(X,X)$
If $L: X \to Y$ then $L^*: Y \to X$ denotes the Hilbert space adjoint of $L$ that satisfies $\iprod{Lx, y}_Y = \iprod{x, L^*y}_X$ for all $x \in X$ and $y \in Y$.

Now let $H$, $U$ and $Y$ be specific given Hilbert spaces, with $H$ complex. Since the solution $P(t)$ should be symmetric and positive definite, we will consider the Banach space $\SH = \{P \in \cL(H) \;|\; P = P^*\}$ with the $\cL(H)$-norm and the cone $\SHP = \{P \in \SH \;|\; P\ge 0 \}$.
We make the following assumption on the operators $A$, $Q$ and $S$ in~\eqref{eq:DRE}:

\begin{assumption}\label{ass:operators}
  There are operators $B \in \cL(U,H)$ and $E \in \cL(H,Y)$ such that $Q$ and $S$ can be factorized as $Q=E^*E$ and $S=BB^*$. Further, $P_0 \in \SHP$, and the unbounded operator $A: \domain{A} \subset H \to H$ is the generator of an analytic semigroup $t \mapsto \exp{tA} \in \cL(H)$.
\end{assumption}

\begin{remark}\label{rem:basic_operator_properties}
  It follows directly from the assumption that $Q, S \in \SHP$. Further, $A$ is closed and $\domain{A}$ is dense in $H$~\cite[Corollary 1.2.5]{Pazy1983}.
\end{remark}

\begin{remark}
  In the context of LQR problems, we assume that the control operator $B$ and the observation operator $E$ are both bounded. This allows for distributed control and observation but typically not boundary control and observation. Extending the work to less regular $B$ and $E$ is likely possible, but at the cost of reduced convergence orders and significantly more technical machinery.
\end{remark}

In the matrix-valued case when $P(t)$ and $A$ are in $\R^{N \times N}$, the right-hand side of~\eqref{eq:DRE} is well defined. In our context, we are interested in operator-valued problems and search for a solution $P$ with $P(t) \in \SHP \subset \SH$. Under Assumption~\ref{ass:operators}, the terms $Q$ and $P(t)SP(t)$ are still well defined as operators in $\SH$. However, since $A$ is unbounded we need to properly interpret $A^*P(t) + P(t)A$, which is not necessarily in $\SH$. We follow~\cite{Bensoussan2007} and define $\cA: \domain{\cA} \subset \SH \to \SH$ by
\begin{equation*}
  \iprod{\cA(P)x, y} = \phi_P(x, y), \quad x,y \in \domain{A}, P \in \domain{\cA}, 
\end{equation*}
where
\begin{equation*}
  \phi_P(x, y) = \iprod{Px,Ay} + \iprod{Ax,Py}, \quad x,y \in \domain{A},
\end{equation*}
and
\begin{equation*}
  \domain{\cA} = \{P \in \SH \;;\; \phi_P \text{ is continuous in } H \times H\}.
\end{equation*}
Essentially this means that $\domain{\cA}$ are those $P \in \SH$ for which $A^*P + PA$ can be extended to an operator in $\SH$. On $\domain{A}$, this extension agrees with the original expression:
\begin{lemma}[Proposition 3.1~\cite{Bensoussan2007}] \label{lemma:cAidentification}
  If $P \in \domain{\cA}$ then $x \in \domain{A}$ implies that $Px \in \domain{A^*}$ and
  \begin{equation*}
    \cA Px = A^*Px + PAx.
  \end{equation*}
\end{lemma}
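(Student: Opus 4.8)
The plan is to verify the defining property of the adjoint $A^*$ directly. Recall that $z \in \domain{A^*}$ precisely when the functional $y \mapsto \iprod{Ay, z}$, defined on the dense subspace $\domain{A}$, extends to a bounded functional on $H$; equivalently, when there exists $w \in H$ with $\iprod{Ay, z} = \iprod{y, w}$ for all $y \in \domain{A}$, in which case $A^* z = w$. Here $w$ is unique because $\domain{A}$ is dense, which holds by Remark~\ref{rem:basic_operator_properties}. Thus, fixing $x \in \domain{A}$, it suffices to exhibit such a $w$ for the vector $z = Px$ and to identify it as $\cA P x - P A x$.

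First I would record that $P \in \domain{\cA}$ means exactly that $\phi_P$ is continuous on $H \times H$, so $\cA P \in \SH \subset \cL(H)$ is the bounded operator with $\iprod{\cA P x, y} = \phi_P(x,y) = \iprod{Px, Ay} + \iprod{Ax, Py}$ for all $x, y \in \domain{A}$. Note also that $P A x$ is well defined and lies in $H$, since $A x \in H$ and $P \in \cL(H)$, and likewise $\cA P x \in H$.

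The core computation is then a short manipulation using self-adjointness of $P$, i.e.\ $P = P^*$. Rewriting the second term gives $\iprod{Ax, Py} = \iprod{P A x, y}$, and substituting this into the relation above and rearranging isolates the first term as
\[
  \iprod{Px, Ay} = \iprod{\cA P x - P A x,\, y}, \qquad y \in \domain{A}.
\]
Taking complex conjugates of both sides, using that $H$ is complex and the inner product is conjugate-symmetric, turns this into $\iprod{Ay, Px} = \iprod{y,\, \cA P x - P A x}$ for all $y \in \domain{A}$. By the characterization of the adjoint recalled above, this is exactly the statement that $Px \in \domain{A^*}$ with $A^*(Px) = \cA P x - P A x$, which rearranges to the claimed identity $\cA P x = A^* P x + P A x$.

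The only real subtlety — and the step I would be most careful about — is keeping the complex conjugations and the placement of $P$ consistent across the self-adjoint move and the final passage to the adjoint, since a sign or conjugation slip there would break the identification of $w$. Everything else is bookkeeping; no analytic-semigroup or regularity input beyond boundedness of $P$ and density of $\domain{A}$ is needed.
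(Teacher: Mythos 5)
Your proof is correct. Note that the paper does not prove this lemma at all — it is imported verbatim as Proposition 3.1 of the cited reference \cite{Bensoussan2007} — so there is no in-paper argument to compare against; your direct verification (isolating $\iprod{Px,Ay} = \iprod{\cA Px - PAx, y}$ via $P=P^*$, conjugating, and invoking the defining characterization of $\domain{A^*}$ together with density of $\domain{A}$) is a valid, self-contained replacement that uses exactly the right ingredients and nothing more. The one point you flag as delicate, the conjugation and adjoint conventions, is handled consistently with the paper's convention $\iprod{Lx,y}=\iprod{x,L^*y}$.
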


With this in mind, for the rest of the paper we will be considering the equation
\begin{equation} \label{eq:DREclassical}
  \dot{P} = \cA P + Q - PSP
\end{equation}
rather than~\eqref{eq:DRE}. According to Lemma~\ref{lemma:cAidentification}, if $P(t) \in \domain{\cA}$ then these equations coincide after multiplication with $x \in \domain{A}$.

The operator $A$ is a generator of an analytic semigroup $\exp{tA}$, and this gives rise to an analytic semigroup $\exp{t\cA}: \SH \to \SH$ generated by $\cA$, defined by
\begin{equation*}
  \exp{t\cA}P = \exp{tA^*} P \exp{tA}, \quad t \ge 0.
\end{equation*}
As noted in~\cite[Remark 3.1]{Bensoussan2007}, it is not strongly continuous, i.e.\  $\normLHH{\exp{t\cA}P - P} \not\to 0$ as $t \to 0^+$. However, it satisfies $\norm{\exp{t\cA}Px - Px} \to 0$ as $t \to 0^+$ for every $x \in H$ and $P \in \SH$.

In the following, throughout the paper, $C$ denotes an arbitrary constant which may be different from line to line. It will often depend on the problem data $T$, $\normLHH{Q}$ and $\normLHH{S}$, but for brevity we only specify its dependence on other parameters.
\begin{lemma}\label{lemma:analyticSG}
 There exist a constant $C$ such that the analytic semigroup $\exp{tA}$ generated by the operator $A$ satisfies the inequalities
  \begin{equation*}
    \normLHH{\exp{tA}} \le C \quad \text{and} \quad \normLHH{A\exp{tA}} \le \frac{C}{t}
  \end{equation*}
  for $t \in (0,T]$. Similarly, the analytic semigroup $\exp{t\cA}$ satisfies
  \begin{equation*}
    \normLHH{\exp{t\cA}P} \le C \normLHH{P} \quad \text{and} \quad \normLHH{\cA\exp{t\cA}P} \le \frac{C}{t}\normLHH{P}
  \end{equation*}
   for $t \in (0,T]$ and $P \in \SH$.
\end{lemma}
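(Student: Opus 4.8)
The plan is to treat the four estimates in two groups. The bounds on $\exp{tA}$ are classical facts from the theory of analytic semigroups and I would simply invoke them: since $A$ generates a $C_0$-semigroup there are constants $M\ge 1$ and $\omega\in\R$ with $\normLHH{\exp{tA}}\le M\exp{\omega t}$, which on the compact interval $t\in(0,T]$ gives the uniform bound $\normLHH{\exp{tA}}\le C$. The smoothing estimate $\normLHH{A\exp{tA}}\le C/t$ is the defining analytic-semigroup property (e.g.\ \cite[Theorem 2.5.2]{Pazy1983}), again after absorbing the factor $\exp{\omega t}$ into $C$ on $(0,T]$. I would also record here that, on a Hilbert space, $\exp{tA^*}=(\exp{tA})^*$ is the adjoint semigroup generated by $A^*$; since taking adjoints preserves the operator norm, it inherits both estimates with the same constant, i.e.\ $\normLHH{\exp{tA^*}}\le C$ and $\normLHH{A^*\exp{tA^*}}=\normLHH{A\exp{tA}}\le C/t$.

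The bound $\normLHH{\exp{t\cA}P}\le C\normLHH{P}$ is then immediate from the definition $\exp{t\cA}P=\exp{tA^*}P\exp{tA}$ and submultiplicativity of the operator norm, using the two bounds on $\exp{tA}$ and $\exp{tA^*}$: $\normLHH{\exp{tA^*}P\exp{tA}}\le\normLHH{\exp{tA^*}}\,\normLHH{P}\,\normLHH{\exp{tA}}\le C^2\normLHH{P}$.

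For the last estimate the key step is to produce an explicit bounded-operator expression for $\cA\exp{t\cA}P$. I would first show directly that $P_t:=\exp{tA^*}P\exp{tA}\in\domain{\cA}$ for $t>0$ by checking that $\phi_{P_t}$ extends continuously to $H\times H$: for $x,y\in\domain{A}$, using $\exp{tA^*}=(\exp{tA})^*$ and $\exp{tA}Ay=A\exp{tA}y$ one rewrites $\iprod{P_tx,Ay}=\iprod{P\exp{tA}x,A\exp{tA}y}$, and similarly for the second term, so that only the bounded operators $P$, $\exp{tA}$ and $A\exp{tA}$ appear. Reading off $\cA P_t$ from the resulting bilinear form yields the identity $\cA\exp{t\cA}P=A^*\exp{tA^*}P\exp{tA}+\exp{tA^*}PA\exp{tA}$ as an equality of bounded operators in $\SH$. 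Bounding each summand by submultiplicativity then finishes the proof: the first term is controlled by $\normLHH{A^*\exp{tA^*}}\,\normLHH{P}\,\normLHH{\exp{tA}}\le (C/t)\normLHH{P}$ and the second by $\normLHH{\exp{tA^*}}\,\normLHH{P}\,\normLHH{A\exp{tA}}\le (C/t)\normLHH{P}$.

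I expect the only real obstacle to be the rigorous justification of this product-rule identity for $\cA\exp{t\cA}P$. Because $\exp{t\cA}$ is \emph{not} strongly continuous in $\cL(H)$-norm, one cannot simply invoke the standard fact that an analytic semigroup maps into the domain of its generator, nor differentiate $t\mapsto\exp{t\cA}P$ in operator norm. This is why I would verify membership in $\domain{\cA}$ by hand through the bilinear form $\phi_{P_t}$ and its continuous extension, which simultaneously delivers the identity and the two bounded factors $A\exp{tA}$ and $A^*\exp{tA^*}$ that carry the $1/t$ smoothing; the remaining estimates are then routine.
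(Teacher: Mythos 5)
Your proposal is correct and follows essentially the same route as the paper: classical analytic-semigroup bounds for $\exp{tA}$, the adjoint relation $\exp{tA^*}=(\exp{tA})^*$, submultiplicativity for $\exp{t\cA}P$, and a hands-on verification that $\exp{t\cA}P\in\domain{\cA}$ via the bilinear form $\phi_{\exp{t\cA}P}$, using the commutation $\exp{tA}A=A\exp{tA}$ to shift the smoothing onto $A\exp{tA}$. The only cosmetic difference is that you extract the explicit operator identity $\cA\exp{t\cA}P=A^*\exp{tA^*}P\exp{tA}+\exp{tA^*}PA\exp{tA}$ and bound its two summands, whereas the paper bounds $\vert\phi_{\exp{t\cA}P}(x,y)\vert$ directly and concludes via the sup characterization of $\normLHH{\cA\exp{t\cA}P}$; the underlying computation is identical.
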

\begin{proof}
  The first two inequalities are standard results for analytic semigroups, see e.g.~\cite[Lemma 12.32]{RenardyRogers2004}. They are usually stated with $C\exp{\omega t}$ instead of $C$, where $\omega \in \R$, but the exponential is clearly bounded by $\max (1, \exp{\omega T})$ on the finite time interval $[0,T]$.
  
  Since $A$ is closed (Remark~\ref{rem:basic_operator_properties}), it holds that $(A^*)^* = A$~\cite[Theorem 8.5.7]{RenardyRogers2004} and thus $\bigl ( \exp{tA^*} \bigr)^* = \exp{tA}$~\cite[Corollary 1.10.6]{Pazy1983}.
  This means that the bound for $\exp{t\cA}$ follows directly by applying the 
  bounds of the individual semigroups $\exp{tA}$ and $\exp{tA^*}$. 
    We next verify that $\exp{t\cA}P$ lies in $\domain{\cA}$. This holds, since for any $t \in (0,T]$ and $x,y \in \domain{A}$,
    \begin{equation*}
    \begin{split}
        \vert \phi_{\exp{t\cA}P}(x,y)\vert 
        &= \vert \iprod{A x, \exp{tA^*}P\exp{tA}y} + \iprod{\exp{tA^*}P\exp{tA}x, Ay} \vert  \\
        & = \vert \iprod{ \exp{tA}Ax, P \exp{tA}y} + \iprod{P\exp{tA}x, \exp{tA}Ay}\vert \\
        & = \vert \iprod{ A\exp{tA}x, P \exp{tA}y} + \iprod{P\exp{tA}x, A\exp{tA}y}\vert \\
        & \leq \normLHH{A \exp{tA}}\norm{x}\normLHH{P}\normLHH{\exp{tA}}\norm{y} \\
        & \quad + \normLHH{A \exp{tA}}\norm{x}\normLHH{P}\normLHH{\exp{tA}}\norm{y} \\
        &\leq \frac{C}{t} \normLHH{P} \norm{x} \norm{y}.
    \end{split}
  \end{equation*}
As noted in Remark~\ref{rem:basic_operator_properties}, $\domain{A}$ is dense in $H$, and the bound thus holds for all $x, y \in H$. But, by definition,
\begin{equation*}
  \normLHH{\cA \exp{t\cA} P}
  = \sup_{x\in H} \sup_{y\in H} \frac{\vert \iprod{\cA(P)x, y} \vert}{\norm{x}\norm{y}} 
  =\sup_{x\in H} \sup_{y\in H} \frac{\vert \phi_{\exp{t\cA}P}(x,y) \vert}{\norm{x}\norm{y}},
\end{equation*}
so the above inequality finishes the proof.
\end{proof}

Our basic assumptions on the operators agree with those in~\cite{Bensoussan2007}, and imply the existence of a unique solution to the problem:
\begin{lemma}[{\cite[Theorem IV-1.3.1]{Bensoussan2007}}] 
  \label{lemma:existence}
  Under Assumption~\ref{ass:operators}, there is a (classical) solution $P$ to~\eqref{eq:DREclassical} on $t \in [0, T]$. That is, $P(0) = P_0$ and for any $t \in (0,T)$, $P(t)$ is differentiable with $P(t) \in \domain{\cA} \cap \SHP$ and $\dot{P}(t) = \cA P(t) + Q - P(t)SP(t)$.
  Further, for any $\eps > 0$, $P \in \CHk{\infty}{[\eps, T]}$.
\end{lemma}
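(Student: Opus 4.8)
The existence of a unique classical solution $P$ with $P(t)\in\domain{\cA}\cap\SHP$ and the pointwise differential identity are precisely the content of the cited result in \cite{Bensoussan2007}; that theorem also supplies the a priori bound $\sup_{t\in[0,T]}\normLHH{P(t)}\le C$, which I will use freely. The only genuinely new part is the smoothing claim $P\in\CHk{\infty}{[\eps,T]}$ for every $\eps>0$, i.e.\ operator-norm $C^\infty$-regularity away from $t=0$ (recall that at $t=0$ only the pointwise convergence $\exp{t\cA}P_0\to P_0$ survives). The plan is to prove this by a bootstrap based on a variation-of-constants representation. Fixing $t_0=\eps/2$ and writing $N(s)=Q-P(s)SP(s)$, one first records that the classical solution satisfies, for $t\in[\eps,T]$,
\[
  P(t)=\exp{(t-t_0)\cA}P(t_0)+\int_{t_0}^{t}\exp{(t-s)\cA}N(s)\,\ds .
\]
Deriving this is a minor technical point: differentiating $s\mapsto\exp{(t-s)\cA}P(s)$ on $[t_0,t]$, the two $\cA$-terms cancel (using that $\cA$ commutes with its semigroup on $\domain{\cA}$), leaving $\exp{(t-s)\cA}N(s)$; integrating gives the formula, with every term remaining bounded because $t_0>0$.

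The homogeneous term is harmless. Since $A$ generates an analytic semigroup, $t\mapsto\exp{tA}$ and $t\mapsto\exp{tA^{*}}$ are real-analytic in $\cL(H)$ for $t>0$, so $t\mapsto\exp{(t-t_0)\cA}P(t_0)=\exp{(t-t_0)A^{*}}P(t_0)\exp{(t-t_0)A}$ is $C^\infty$ on $[\eps,T]$, with all derivatives bounded there by Lemma~\ref{lemma:analyticSG}. For the integral term $v(t)=\int_{t_0}^{t}\exp{(t-s)\cA}N(s)\,\ds$ I would induct on the regularity of $N$. The base case is the delicate one: a priori $N$ is only bounded (as $P$ is bounded), and for analytic semigroups boundedness of the inhomogeneity does \emph{not} make $v$ differentiable. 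The standard difference estimates for analytic semigroups show that a bounded inhomogeneity already yields a locally Hölder continuous $v$, hence a locally Hölder continuous $P$ on $(0,T]$; since $P\mapsto PSP$ is a bounded quadratic (hence locally Lipschitz) map on $\SH$, also $N$ is locally Hölder. The classical regularity theorem for the inhomogeneous Cauchy problem with an analytic generator and a Hölder inhomogeneity (e.g.\ \cite[Thm.~4.3.2]{Pazy1983}) then gives $v\in\CHk{1}{[\eps,T]}$ with $\dot v=\cA v+N$, so $P\in\CHk{1}{[\eps,T]}$.

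Once one derivative is available the argument becomes self-improving. Differentiating $v$ after the substitution $\sigma=t-s$ and reverting yields the key identity
\[
  \dot v(t)=\exp{(t-t_0)\cA}N(t_0)+\int_{t_0}^{t}\exp{(t-s)\cA}\dot N(s)\,\ds ,
\]
in which the boundary term is again $C^\infty$ on $[\eps,T]$ and the remaining integral has exactly the same form as $v$, but with integrand $\dot N$. Crucially, the potentially singular factor $\cA\exp{(t-s)\cA}$ (with $\normLHH{\cA\exp{r\cA}}\le C/r$) has been traded for a derivative falling on $N$, so no $1/(t-s)$ singularity arises. Hence the map $g\mapsto\int_{t_0}^{\cdot}\exp{(\cdot-s)\cA}g(s)\,\ds$ sends $\CHk{k}{[\eps,T]}$ into $\CHk{k+1}{[\eps,T]}$. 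Because $P\mapsto Q-PSP$ is smooth, $P\in\CHk{k}{[\eps,T]}$ forces $N\in\CHk{k}{[\eps,T]}$, whence $v\in\CHk{k+1}{[\eps,T]}$ and therefore $P\in\CHk{k+1}{[\eps,T]}$. Starting from $P\in\CHk{1}{[\eps,T]}$ and iterating gives $P\in\CHk{k}{[\eps,T]}$ for every $k$, i.e.\ $P\in\CHk{\infty}{[\eps,T]}$; as $\eps>0$ was arbitrary, the claim follows.

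I expect the base step, passing from mere boundedness of $N$ to the first derivative of $P$, to be the main obstacle: this is exactly the place where the $1/(t-s)$ singularity of $\cA\exp{(t-s)\cA}$ must be controlled, and it is resolved only by first establishing Hölder continuity. The higher-order regularity is then essentially automatic, since differentiation shifts onto the (smooth) quadratic nonlinearity rather than onto the singular semigroup factor.
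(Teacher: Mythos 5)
The paper offers no proof of this lemma: it is quoted verbatim from \cite[Theorem IV-1.3.1]{Bensoussan2007}, including the $\CHk{\infty}{[\eps,T]}$ regularity, so your proposal is an attempt to reconstruct a result the authors deliberately take from the literature. Your outline is the standard parabolic bootstrap and its overall architecture is sound: the variation-of-constants formula from $t_0=\eps/2$, the harmlessness of the homogeneous term for $t-t_0\ge\eps/2$, the observation that a bounded inhomogeneity gives only H\"older continuity of $v$ (your log-Lipschitz estimate uses only $\normLHH{\cA\exp{r\cA}}\le C/r$ and is fine), and the higher-order step in which differentiation is shifted onto $N$ rather than onto the singular factor $\cA\exp{(t-s)\cA}$.

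The genuine gap is in the base case, exactly where you predicted the difficulty but then declared it resolved. You invoke \cite[Theorem 4.3.2]{Pazy1983} to pass from H\"older continuity of $N$ to $v\in\CHk{1}{[\eps,T]}$ with $\dot v=\cA v+N$. That theorem is stated and proved for \emph{strongly continuous} analytic semigroups, and $\exp{t\cA}$ on $\SH$ is explicitly not strongly continuous (Remark 3.1 of \cite{Bensoussan2007}, quoted in Section~\ref{sec:setting}): $\normLHH{\exp{\delta\cA}N-N}\not\to 0$ as $\delta\to 0^+$ for general $N\in\SH$. The proof of Pazy's theorem hinges on the splitting
\begin{equation*}
  \cA v(t)=\int_{t_0}^{t}\cA\exp{(t-s)\cA}\bigl(N(s)-N(t)\bigr)\,\ds+\Bigl(\exp{(t-t_0)\cA}N(t)-N(t)\Bigr),
\end{equation*}
whose second piece is obtained from $\int_{\delta}^{t-t_0}\cA\exp{r\cA}N(t)\,\dr=\exp{(t-t_0)\cA}N(t)-\exp{\delta\cA}N(t)$ by letting $\delta\to 0^+$; the limit exists only strongly (pointwise on $H$), not in $\cL(H)$-norm, so the identity $\dot v=\cA v+N$ in operator norm does not follow off the shelf. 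Closing this gap requires either redoing the regularity argument in the pointwise spaces $\CpH{\cdot}$ and then upgrading (which is the kind of $\eps\to 0^+$ bookkeeping the rest of this paper is built around), or exploiting the specific factorized structure $N(s)=E^*E-P(s)BB^*P(s)$, or simply following the proof in \cite{Bensoussan2007}. The subsequent induction $\CHk{k}{[\eps,T]}\to\CHk{k+1}{[\eps,T]}$ is unproblematic once the first derivative is in hand.
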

\noindent Here,
\begin{equation*}
  \CHk{\infty}{[\eps, T]} =
  \left\{
  \begin{aligned}
  &t \mapsto P(t) \in \SH \text{ is infinitely differentiable in } \cL(H) \\  
  &\text{for } t\in [\eps,T] \text{ and }\sup_{t\in (\eps,T)}\normLHH{\ddjx{k}{t}P(t)}< \infty, \; k = 0, 1, \ldots
\end{aligned}
\right\}.
\end{equation*}
Thus a solution exists regardless of how smooth $P_0$ is, and it is infinitely smooth away from $t = 0$. However, the solution will typically not be continuous at $t=0^+$ in the $\cL(H)$-norm. We can only guarantee that $P$ is strongly continuous. 
We therefore introduce the function space
\begin{equation*}
  \CpH{[0, T]}=
  \left\{
  \begin{aligned}
  t \mapsto P(t) \in \SH \;\vert\; &t \mapsto P(t)x \text{ is continuous in } H  \\  
  &\text{for any } x \in H
\end{aligned}
\right\},
\end{equation*}
equipped with the norm 
\begin{equation*}
  \maxnorm{P} = \sup_{t\in (0,T)}\normLHH{P(t)}.
\end{equation*}
As noted in~\cite[Section IV-1.2.1]{Bensoussan2007}, $\maxnorm{P}$ is finite for any $P \in \CpH{[0, T]}$ by the uniform boundedness theorem.

We also consider the spaces $\CpHk{k}{[0, T]}$ which consist of functions $t \mapsto P(t) \in \SH$ such that $t \mapsto P(t)x$ is $k$ times differentiable for any $x \in H$ and where the $k$-th derivatives belongs to $\CpH{[0, T]}$. 
With these definitions, we have the following regularity near $t=0$:
\begin{lemma}[{\cite[Proposition IV-1.3.2 and Theorem IV-1.3.1]{Bensoussan2007}}] 
  \label{lemma:regularity}
  Under Assumption~\ref{ass:operators} the solution $P$ guaranteed to exist by Lemma~\ref{lemma:existence} satisfies $P \in \CpH{[0, T]}$. If in addition $P_0 \in \domain{\cA}$, then $P \in \CpHk{1}{[0, T]}$ and for any $t \in [0,T]$, $\cA P \in \CpH{[0, T]}$.
\end{lemma}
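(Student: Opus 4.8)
The plan is to work with the mild (variation-of-constants) formulation of \eqref{eq:DREclassical}. Since Lemma~\ref{lemma:existence} guarantees that $P$ is a classical solution on $(0,T]$ with $P(s)\in\domain{\cA}$, the function $s\mapsto \exp{(t-s)\cA}P(s)$ is differentiable there with $\dds\bigl(\exp{(t-s)\cA}P(s)\bigr) = \exp{(t-s)\cA}f(s)$, where $f(s) := Q - P(s)SP(s)$. Integrating over $(0,t]$ (which is justified once the a priori bound below is in hand) yields
\[
  P(t) = \exp{t\cA}P_0 + \int_0^t \exp{(t-s)\cA} f(s)\,\ds,\qquad t\in(0,T].
\]
Everything then reduces to analyzing the two terms on the right as $t\to 0^+$, using the smoothing estimates of Lemma~\ref{lemma:analyticSG} together with the strong (but not norm) continuity $\norm{\exp{t\cA}Px - Px}\to 0$ recorded in the text.

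For the first claim I would first secure the a priori bound $\maxnorm{P}<\infty$ by a comparison argument exploiting positivity. Writing $D = P - \Pi$ with $\Pi(t) = \exp{t\cA}P_0 + \int_0^t \exp{(t-s)\cA}Q\,\ds$, one checks that $\dot D = \cA D - PSP$ with $D(0)=0$, so $D(t) = -\int_0^t \exp{(t-s)\cA}P(s)SP(s)\,\ds \le 0$ because $\exp{t\cA}$ preserves positivity; hence $0\le P(t)\le\Pi(t)$ and for self-adjoint operators this forces $\normLHH{P(t)}\le\normLHH{\Pi(t)}\le C\bigl(\normLHH{P_0}+\normLHH{Q}\bigr)$. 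Continuity of $t\mapsto P(t)x$ on $(0,T]$ is immediate from Lemma~\ref{lemma:existence}, so only the endpoint $t=0$ remains: the first term satisfies $\exp{t\cA}P_0 x\to P_0 x$ by strong continuity, while the integral term is bounded in norm by $C t\,\sup_{s}\normLHH{f(s)} \to 0$. Thus $P(t)x\to P_0 x$ and $P\in\CpH{[0,T]}$.

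For the second claim I would apply $\cA$ to the mild formula. The linear part is now harmless: since $P_0\in\domain{\cA}$ the semigroup commutes with its generator on the domain, so $\cA\exp{t\cA}P_0 = \exp{t\cA}\cA P_0 \to \cA P_0$ strongly. The nonlinear integral carries the singular kernel $\normLHH{\cA\exp{(t-s)\cA}}\le C/(t-s)$, which I would tame by the standard subtraction
\[
  \cA\!\int_0^t\! \exp{(t-s)\cA}f(s)\,\ds
  = \int_0^t \cA\exp{(t-s)\cA}\bigl(f(s)-f(t)\bigr)\,\ds + \bigl(\exp{t\cA}-\Id\bigr)f(t).
\]
The boundary term tends strongly to $0$ because $f(t)x\to f(0)x$ and $\exp{t\cA}\to\Id$ strongly. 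Since $f(s)-f(t) = -P(s)S(P(s)-P(t)) - (P(s)-P(t))SP(t)$, the remaining integral requires a Hölder-type modulus of continuity for $P$ near $t=0$. Once a pointwise Lipschitz estimate $\norm{(P(s)-P(t))x}\le C_x|t-s|$ is available, the integral applied to $x$ is bounded by $C_x\int_0^t (t-s)^{-1}|t-s|\,\ds = C_x t \to 0$, giving $\cA P(t)x\to\cA P_0 x$; combined with $f(t)x\to f(0)x$ this yields strong continuity of $\dot P = \cA P + f$ up to $t=0$, i.e.\ $P\in\CpHk{1}{[0,T]}$ and $\cA P\in\CpH{[0,T]}$.

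The main obstacle is exactly this modulus-of-continuity estimate. Because $P$ is \emph{not} norm-continuous at $t=0$, the usual parabolic machinery based on norm-Hölder continuity of the nonlinearity is unavailable, and all bounds must be taken pointwise in $x\in H$. Moreover, the pointwise Lipschitz bound on $P$ is essentially the bounded strong derivative that constitutes the conclusion $P\in\CpHk{1}{[0,T]}$, so a naive argument is circular unless it is set up as an a priori estimate. I would resolve this by a fixed-point/continuation scheme: formulate the mild equation for the pair $(P,\cA P)$ (equivalently, for $\dot P$) in the strong topology on a short interval $[0,\delta]$, and use the smoothing bounds of Lemma~\ref{lemma:analyticSG} together with the $\domain{\cA}$-regularity of $P_0$ to obtain a contraction whose fixed point automatically carries a uniform pointwise Lipschitz bound. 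The estimate is then propagated across $[0,T]$ using the $\CHk{\infty}{[\eps,T]}$ regularity already furnished by Lemma~\ref{lemma:existence}.
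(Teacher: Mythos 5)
The paper does not prove this lemma at all: it is imported verbatim from Bensoussan et al.\ (Proposition IV-1.3.2 and Theorem IV-1.3.1 of \cite{Bensoussan2007}), so there is no in-paper argument to compare against. Judged on its own terms, your first part is essentially the standard route and is sound in outline: the mild formula, positivity preservation of $\exp{t\cA}$ (which the paper does establish, in the proof of Lemma~\ref{lemma:schemes_SHP}), the comparison $0\le P(t)\le\Pi(t)$ and hence $\normLHH{P(t)}\le\normLHH{\Pi(t)}$, and strong continuity at $t=0$ from the strong continuity of $\exp{t\cA}$. The one ordering issue you should not wave away is that the comparison argument, run from $t=\eps$ (which is all that Lemma~\ref{lemma:existence} licenses), bounds $\normLHH{P(t)}$ in terms of $\normLHH{P(\eps)}$, which is not yet controlled as $\eps\to0^+$; the uniform bound really has to be extracted from the construction of the solution (approximation by bounded generators), not from the solution after the fact.

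The genuine gap is in the second claim. Your subtraction trick reduces everything to a pointwise Lipschitz (or at least H\"older) modulus $\norm{(P(s)-P(t))x}\le C_x\,|t-s|$ near $t=0$, and you correctly observe that this is essentially the conclusion $P\in\CpHk{1}{[0,T]}$ itself. The proposed escape --- a contraction for the pair $(P,\cA P)$ in the mild formulation --- is only gestured at, and as set up it does not close: the kernel $\normLHH{\cA\exp{(t-s)\cA}}\le C/(t-s)$ is \emph{not} integrable, so the map $P\mapsto\cA\int_0^t\exp{(t-s)\cA}(Q-PSP)\,\ds$ cannot be bounded, let alone contracted, on a space of merely bounded strongly continuous functions; to use the subtraction one must already work in a space encoding pointwise H\"older continuity, which reintroduces exactly the estimate you are trying to produce. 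The clean way out (and, in substance, the argument in \cite{Bensoussan2007}) is to avoid $\cA\exp{t\cA}$ entirely: formally differentiate the DRE to obtain the \emph{linear} Lyapunov-type equation $\dot V=\cA V-VSP-PSV$ with datum $V_0=\cA P_0+Q-P_0SP_0$, which lies in $\SH$ precisely because $P_0\in\domain{\cA}$; its mild form $V(t)=\exp{t\cA}V_0-\int_0^t\exp{(t-s)\cA}\bigl(V(s)SP(s)+P(s)SV(s)\bigr)\ds$ has no singular kernel, so a Picard iteration in $\CpH{[0,T]}$ converges, and one then identifies $V=\dot P$ and recovers $\cA P=V-Q+PSP\in\CpH{[0,T]}$. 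Without either that device or a worked-out a priori H\"older estimate, your proof of the second claim is incomplete.
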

\begin{remark}
  In~\cite{Bensoussan2007}, the function spaces $C_s([0,T], \SH)$ and $C_u([0,T], \SH)$ are considered, and the above results are stated as $P \in C_s([0,T], \SH)$. However, these two spaces contain the same sets of functions, and the results can thus equivalently be stated for $P \in C_u([0,T], \SH)$. We prefer to use this version here because $C_u([0,T], \SH)$ is a Banach space, in contrast to $C_s([0,T], \SH)$ which is equipped with a weaker topology. Our space $\CpH{[0, T]}$ is the same as $C_u([0,T], \SH)$ but has been renamed in an attempt to minimize confusion. The subscript $p$ is meant to be read as ``pointwise'' (for points $x \in H$).
\end{remark}

We also have positive semi-definiteness:
\begin{lemma}[{\cite[Proposition IV-1.3.3 and Theorem IV-1.2.1]{Bensoussan2007}}] \label{lemma:solution_SHP}
  For each $t \in [0,T]$, the solution $P$ to~\eqref{eq:DREclassical} guaranteed by Lemma~\ref{lemma:regularity} satisfies $P(t) \in \SHP$.
\end{lemma}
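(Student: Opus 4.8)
The plan is to exploit the \emph{closed-loop} structure of~\eqref{eq:DREclassical} and to represent the solution through a variation-of-constants formula in which every contribution is manifestly positive. Using that $S = S^*$, one may complete the square to rewrite the equation formally as
\begin{equation*}
  \dot{P} = A^* P + PA + Q - PSP = (A - SP)^* P + P(A - SP) + Q + PSP .
\end{equation*}
Let $\{\Phi(t,s)\}_{0 \le s \le t \le T}$ denote the evolution family obtained by perturbing the analytic generator $A$ with the bounded, pointwise-continuous and (by Lemma~\ref{lemma:regularity}) uniformly bounded operator $-SP(t)$; such a family exists by standard perturbation theory for analytic semigroups. The displayed equation is then a time-dependent Lyapunov equation for $P$, whose solution admits the representation
\begin{equation*}
  P(t) = \Phi(t,0)^* P_0 \Phi(t,0) + \int_0^t \Phi(t,s)^* \bigl(Q + P(s) S P(s)\bigr) \Phi(t,s) \, \ds .
\end{equation*}
Granting this, positivity is immediate: writing $M(s) = Q + P(s) S P(s)$, for any $x \in H$ and $0 \le s \le t$ we have $\iprod{\Phi(t,s)^* M(s) \Phi(t,s) x, x} = \iprod{M(s) \Phi(t,s) x, \Phi(t,s) x} \ge 0$ whenever $M(s) \ge 0$; and indeed $Q = E^*E \ge 0$, while $P(s) S P(s) = P(s) B B^* P(s)$ satisfies $\iprod{P(s) S P(s) x, x} = \norm{B^* P(s) x}^2 \ge 0$. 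Combined with $P_0 \in \SHP$, every term lies in $\SHP$, and hence $P(t) \in \SHP$.

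To establish the representation formula I would differentiate the right-hand side in $t$, using $\partial_t \Phi(t,s) = (A - SP(t)) \Phi(t,s)$ and $\partial_s \Phi(t,s) = -\Phi(t,s)(A - SP(s))$, together with the Leibniz rule for the integral; the boundary term at $s = t$ produces exactly $M(t)$, and the remaining terms reassemble into $(A - SP(t))^* P(t) + P(t)(A - SP(t)) + M(t)$, which matches the closed-loop form above. The main obstacle is making this computation rigorous, since $A$ is unbounded and $P$ is only strongly continuous near $t = 0$, so the manipulations cannot be carried out directly in $\cL(H)$. I would circumvent this by testing against $x, y \in \domain{A}$ and working with the sesquilinear form $\phi_P$ from Section~\ref{sec:setting}, where the pointwise regularity $P \in \CpH{[0,T]}$ (and $P \in \CpHk{1}{[0,T]}$ when $P_0 \in \domain{\cA}$) granted by Lemma~\ref{lemma:regularity} makes all expressions well defined; alternatively, one may replace $A$ by its Yosida approximation $A_\lambda$, for which the generator is bounded and both the representation and its verification become elementary, establish $P_\lambda(t) \in \SHP$ there, and pass to the limit $\lambda \to \infty$ using the strong convergence $A_\lambda \to A$ and the continuous dependence of the Riccati solution on the data.

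A conceptually cleaner but less self-contained alternative is the control-theoretic route: under Assumption~\ref{ass:operators} the quadratic form $x \mapsto \iprod{P(t)x, x}$ coincides with the optimal value of the associated finite-horizon LQR cost functional started from $x$, which is a sum of squared norms and therefore nonnegative. Since $P(t) = P(t)^*$ by virtue of $P(t) \in \SH$, nonnegativity of this quadratic form is equivalent to $P(t) \in \SHP$. I expect the variation-of-constants argument to be the more robust one to make fully rigorous in the present operator-valued, analytic-semigroup setting, with the behavior of the evolution family and of $P$ as $t \to 0^+$ being the delicate point.
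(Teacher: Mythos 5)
First, a point of reference: the paper does not prove this lemma at all --- it is quoted verbatim from \cite[Proposition IV-1.3.3 and Theorem IV-1.2.1]{Bensoussan2007} --- so any self-contained argument is by construction ``different from the paper''. Your overall strategy (rewrite the Riccati equation in closed-loop Lyapunov form and represent $P(t)$ as a sum of manifestly nonnegative contributions, or alternatively identify $\iprod{P(t)x,x}$ with an LQR value function) is the standard route and is essentially how the cited reference proceeds, so the plan itself is sound.

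There is, however, a concrete error in your representation formula. With $\Lambda(t) = A - SP(t)$ and $\Phi$ the forward evolution family, $\partial_t \Phi(t,s) = \Lambda(t)\Phi(t,s)$, differentiating $\Phi(t,0)^*P_0\Phi(t,0)$ in $t$ yields
\[
\Phi(t,0)^*\Lambda(t)^*P_0\Phi(t,0) + \Phi(t,0)^*P_0\Lambda(t)\Phi(t,0),
\]
which is \emph{not} $\Lambda(t)^*\bigl[\Phi(t,0)^*P_0\Phi(t,0)\bigr] + \bigl[\Phi(t,0)^*P_0\Phi(t,0)\bigr]\Lambda(t)$: since $\Lambda$ is genuinely time-dependent it does not commute with its own evolution operator, so the terms do not ``reassemble'' as you claim. (In the autonomous case $\Phi(t,s) = \exp{(t-s)A}$ everything commutes and the formula is correct, which is presumably why it looks right.) The repair is to use the evolution family $W(t,s)$ generated by the \emph{adjoint} closed-loop operator $\Lambda(t)^* = A^* - P(t)S$, i.e.\ $\partial_t W(t,s) = \Lambda(t)^*W(t,s)$ with $W(s,s) = I$, and to write
\[
P(t) = W(t,0)\,P_0\,W(t,0)^* + \int_0^t W(t,s)\bigl(Q + P(s)SP(s)\bigr)W(t,s)^*\,\ds ,
\]
for which the differentiation does close up (now $\partial_t W(t,s)^* = W(t,s)^*\Lambda(t)$ places the generators on the correct sides) and the Leibniz boundary term at $s=t$ is exactly $Q + P(t)SP(t)$. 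The positivity conclusion is unaffected, since $W M W^* \ge 0$ whenever $M \ge 0$, so the flaw is repairable without changing the structure of the argument. Your remaining concerns --- existence of the evolution family under a bounded, strongly continuous, uniformly bounded perturbation of the analytic generator, and making the differentiation rigorous via the form $\phi_P$ on $\domain{A}\times\domain{A}$ or a Yosida approximation --- are the right ones, and are handled along precisely those lines in \cite{Bensoussan2007}.
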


We now define the operators $F: \domain{\cA} \to \cL(H)$ and $G: \cL(H) \to \cL(H)$ by
\begin{align} \label{eq:FG}
  F P &= \cA P + Q,\\
  G P &= - PSP,
\end{align}
and consider the two subproblems of~\eqref{eq:DREclassical} that arise when we either omit the $G P$ part or the $F P$ part of the equation:
\begin{corollary} \label{cor:subproblem_solutions}
  Under Assumption~\ref{ass:operators} there exist unique solutions to the two subproblems
  \begin{equation} \label{eq:subproblem_F}
    \dot{P} = F P, \quad P(0) = P_0
  \end{equation}
  and
  \begin{equation} \label{eq:subproblem_G}
    \dot{P} = GP, \quad P(0) = P_0,
  \end{equation}
  which satisfy the stated regularity properties in Lemma~\ref{lemma:existence} and Lemma~\ref{lemma:regularity}. For the second subproblem, the conclusions of Lemma~\ref{lemma:regularity} hold also without $P_0 \in \domain{\cA}$.
\end{corollary}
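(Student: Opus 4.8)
The plan is to avoid any fresh existence or regularity analysis and instead recognize each of the two subproblems as a special case of the full equation~\eqref{eq:DREclassical}, so that Lemmas~\ref{lemma:existence}, \ref{lemma:regularity} and~\ref{lemma:solution_SHP} can be invoked verbatim. The key observation is that~\eqref{eq:subproblem_F} and~\eqref{eq:subproblem_G} each arise from~\eqref{eq:DREclassical} by switching off one of the two terms on the right-hand side, and that switching off a term corresponds to a choice of data that still satisfies Assumption~\ref{ass:operators}. The only work is to exhibit these choices and read off the consequences.

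For the $F$-subproblem~\eqref{eq:subproblem_F}, I would keep $A$ and $E$ as given but take $B = 0$, so that $S = BB^* = 0$ and the quadratic term $PSP$ disappears. Then~\eqref{eq:subproblem_F} is precisely~\eqref{eq:DREclassical} for this data, Assumption~\ref{ass:operators} still holds with the same analytic generator $A$ and the same $Q = E^*E$, and Lemmas~\ref{lemma:existence}, \ref{lemma:regularity} and~\ref{lemma:solution_SHP} immediately yield existence, uniqueness, the stated regularity, and membership in $\SHP$.

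For the $G$-subproblem~\eqref{eq:subproblem_G}, the equation $\dot P = -PSP$ makes no reference to $A$, so I am free to embed it into~\eqref{eq:DREclassical} with whichever generator is most convenient. I would keep $B$ as given but take $A = 0$ and $E = 0$, so that $Q = 0$ and the affine part $\cA P + Q$ vanishes. The zero operator trivially generates the analytic identity semigroup $\exp{tA} = \Id$, so Assumption~\ref{ass:operators} is again satisfied and~\eqref{eq:subproblem_G} becomes~\eqref{eq:DREclassical} for this data; existence, uniqueness and positivity then follow as before. The point deserving care, which is exactly what produces the sharpened statement, is the domain of $\cA$ for this choice: with $A = 0$ the form $\phi_P(x,y) = \iprod{Px, Ay} + \iprod{Ax, Py}$ is identically zero and hence continuous on $H \times H$ for every $P \in \SH$, so that $\domain{\cA} = \SH$ and $\cA \equiv 0$. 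Consequently the hypothesis $P_0 \in \domain{\cA}$ of Lemma~\ref{lemma:regularity} is automatically fulfilled by every $P_0 \in \SHP$, whence $P \in \CpHk{1}{[0,T]}$ (with the now-trivial $\cA P \equiv 0 \in \CpH{[0,T]}$) holds without any additional smoothness assumption on the initial value. I do not expect a genuine obstacle; the only things to verify carefully are this domain computation and the fact that the uniqueness supplied by the cited results transfers to each specialized problem, which it does because the specialized data once more satisfy Assumption~\ref{ass:operators}.
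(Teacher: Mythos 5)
Your proposal is correct and follows essentially the same route as the paper: both specialize \eqref{eq:DREclassical} by taking $S=0$ (via $B=0$) for the $F$-subproblem and $A=Q=0$ for the $G$-subproblem, then invoke Lemmas~\ref{lemma:existence} and~\ref{lemma:regularity}, with the final assertion coming from $\domain{\cA}=\SH$ when $A=0$. Your explicit verification that $\phi_P\equiv 0$ in that case is a slightly more detailed justification of the same observation the paper makes.
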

\begin{proof}
The subproblems can be seen as instances of~\eqref{eq:DREclassical}, where we either take $S = 0$ or we take $A = Q = 0$. These choices certainly fulfil the respective parts of Assumption~\ref{ass:operators}, so we can apply the previous lemmas to establish the existence and regularity properties. The final assertion follows simply by the fact that $\domain{\cA} = \SH$ if $A = 0$.
\end{proof}
We will frequently use the notation $t \mapsto \exp{tF}P_0$ and $t \mapsto \exp{tG}P_0$ to refer to the solutions to the subproblems~\eqref{eq:subproblem_F} and~\eqref{eq:subproblem_G}. In the case of~\eqref{eq:subproblem_F} we also have the representation 
\begin{equation*}
\exp{tF}P_0=\exp{t\cA}P_0+\int_0^t\exp{(t-s)\cA}Q\,\ds,
\end{equation*}
and the lack of $A$ in~\eqref{eq:subproblem_G} allows us to say more than Corollary~\ref{cor:subproblem_solutions}:
\begin{lemma}\label{lemma:NlinFlow}
   Under Assumption~\ref{ass:operators}, the solution to the problem~\eqref{eq:subproblem_G} is given by $\exp{tG}P_0 = (I + tP_0S)^{-1}P_0$, where
     \begin{equation*}
      \maxnorm{t\mapsto\exp{tG}P_0} \leq \big(1 + T \rho \normLHH{S}\big) \rho
    \end{equation*}
    for every $P_0$ with $\norm{P_0} \leq \rho$.
    Additionally, the derivatives of $t\mapsto\exp{tG}P_0$ are bounded by
    \begin{equation*}
     \maxnorm[\Big]{t\mapsto\ddjx{j}{t} \exp{tG}P_0} \le j! \normLHH{S}^j  \big(1 + T \rho \normLHH{S}\big)^{j+1} \rho^{j+1}
    \end{equation*}
    for $j = 1, 2, \ldots$.
  \end{lemma}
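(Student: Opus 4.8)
The plan is to verify the claimed closed-form expression directly and then appeal to the uniqueness granted by Corollary~\ref{cor:subproblem_solutions}. The crux is to establish that $I + tP_0S$ is boundedly invertible on $H$ for every $t \in [0,T]$, together with a bound on its inverse. In the matrix case this is immediate because $P_0S$ has nonnegative spectrum, but in the operator setting $P_0S$ need not be self-adjoint, so I would instead exploit the factorization $S = BB^*$ from Assumption~\ref{ass:operators}. Since $B^*P_0B \in \cL(U)$ is self-adjoint and nonnegative, $I + tB^*P_0B \ge I$ is invertible with $\norm{(I + tB^*P_0B)^{-1}} \le 1$. A push-through (Woodbury) identity then shows that
\[
  R(t) := I - tP_0B(I + tB^*P_0B)^{-1}B^*
\]
is a two-sided inverse of $I + tP_0S$, which I would confirm by expanding $R(t)(I + tP_0S)$ and $(I + tP_0S)R(t)$ and collecting the factor $(I + tB^*P_0B)^{-1}(I + tB^*P_0B) = I$. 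Using $\norm{B}^2 = \norm{BB^*} = \normLHH{S}$ and $\norm{P_0} \le \rho$, this representation yields $\normLHH{(I + tP_0S)^{-1}} = \normLHH{R(t)} \le 1 + t\rho\normLHH{S}$.

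With invertibility in hand, set $P(t) = (I + tP_0S)^{-1}P_0$. Since $t \mapsto I + tP_0S$ is affine and norm-continuously invertible on $[0,T]$, the map $t \mapsto (I + tP_0S)^{-1}$ is differentiable in $\cL(H)$ with the standard derivative $-(I + tP_0S)^{-1}(P_0S)(I + tP_0S)^{-1}$. Hence $\dot P(t) = -(I+tP_0S)^{-1}P_0S(I+tP_0S)^{-1}P_0 = -P(t)SP(t) = GP(t)$ and $P(0) = P_0$, so by the uniqueness in Corollary~\ref{cor:subproblem_solutions} we conclude $\exp{tG}P_0 = (I + tP_0S)^{-1}P_0$. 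Self-adjointness and positivity then come for free from Lemma~\ref{lemma:solution_SHP}; alternatively one checks $(I+tP_0S)^{-1}P_0 = P_0(I + tSP_0)^{-1}$ directly. The first norm bound follows since for $t \in (0,T)$ we have $\normLHH{\exp{tG}P_0} \le \normLHH{(I+tP_0S)^{-1}}\norm{P_0} \le (1 + T\rho\normLHH{S})\rho$, and taking the supremum in $t$ gives the stated estimate for $\maxnorm{t\mapsto\exp{tG}P_0}$.

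For the derivatives, the key simplification is that $P_0S$ commutes with $I + tP_0S$, and hence with its inverse, so that $(I+tP_0S)^{-1}$ behaves like a scalar power under differentiation. A short induction then gives
\[
  \ddjx{j}{t}(I + tP_0S)^{-1} = (-1)^j j!\,(P_0S)^j (I + tP_0S)^{-(j+1)},
\]
whence $\ddjx{j}{t}\exp{tG}P_0 = (-1)^j j!\,(P_0S)^j(I + tP_0S)^{-(j+1)}P_0$. Taking norms and inserting $\norm{P_0S} \le \rho\normLHH{S}$, the inverse bound above raised to the power $j+1$, and $\norm{P_0} \le \rho$ produces exactly
\[
  \normLHH[\big]{\ddjx{j}{t}\exp{tG}P_0} \le j!\,\normLHH{S}^j(1 + T\rho\normLHH{S})^{j+1}\rho^{j+1}.
\]

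The main obstacle is the bounded invertibility of $I + tP_0S$ in the infinite-dimensional setting, where one cannot simply invoke spectral positivity of $P_0S$; the factorization $S = BB^*$ together with the push-through identity is what makes this step clean and, crucially, supplies the quantitative bound $\normLHH{(I+tP_0S)^{-1}} \le 1 + t\rho\normLHH{S}$ that propagates unchanged into every subsequent estimate. The remaining steps are routine once commutativity of $P_0S$ with $(I+tP_0S)^{-1}$ is observed.
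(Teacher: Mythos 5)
Your proposal is correct and reaches every claimed bound, but it executes two of the three steps differently from the paper. For the bounded invertibility of $I + tP_0S$, the paper simply cites Lemma~2A.1 of Lasiecka--Triggiani, which gives $\normLHH{(I+P_1P_2)^{-1}} \le 1 + \normLHH{P_1}\normLHH{P_2}$ for $P_1,P_2 \in \SHP$; you rederive this from scratch via the factorization $S = BB^*$ and the push-through identity, which is self-contained and is in fact the same identity the paper later uses (with $P^{1/2}$ in place of $B$) in the proof of Lemma~\ref{lemma:schemes_SHP}. For verifying that $P(t) = (I+tP_0S)^{-1}P_0$ solves the ODE, the paper works with difference quotients by hand, whereas you invoke the standard differentiability of $t \mapsto Y(t)^{-1}$ for an affine, invertible $Y$; both are fine, and both implicitly rest on the uniqueness from Corollary~\ref{cor:subproblem_solutions} (or, more elementarily, on Picard--Lindel\"of in $\cL(H)$ via the Lipschitz bound of Lemma~\ref{lemma:LipBounds}, which also sidesteps any worry about whether your candidate is a priori in $\SH$). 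The most genuine divergence is in the derivative bounds: the paper differentiates $-PSP$ repeatedly and runs a Leibniz-type induction on the resulting sums, while you exploit the commutativity of $P_0S$ with $(I+tP_0S)^{-1}$ to get the closed form $\ddjx{j}{t}\exp{tG}P_0 = (-1)^j j!\,(P_0S)^j(I+tP_0S)^{-(j+1)}P_0$, from which the stated bound drops out in one line. Your route buys an explicit formula and a shorter estimate; the paper's route avoids the commutativity observation and generalizes more readily to situations where the flow is not given by a resolvent. No gaps.
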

  \begin{proof}
    We first note that for any $P_1, P_2 \in \SHP$, we have
\begin{equation*}
  \normLHH{(I + P_1 P_2)^{-1}} \le 1 + \normLHH{P_1} \normLHH{P_2} 
\end{equation*}
see e.g.~\cite[Lemma 2A.1]{LasieckaTriggiani2000}. Thus the function $P(t)$ given by
\begin{equation*}
  P(t) = (I + tP_0 S)^{-1} P_0
\end{equation*}
is well defined for all $t \ge 0$, and $\normLHH{P(t)} \leq \big(1 + t \normLHH{P_0} \normLHH{S}\big) \normLHH{P_0}$. As a consequence, the first inequality is satisfied.
To prove that $P$ solves the problem~\eqref{eq:subproblem_G}, we first note that
\begin{align*}
  &\normLHH{(I + (t+\tau)P_0  S)^{-1}  P_0 - (I + tP_0  S)^{-1}  P_0} =\\
&\quad= \normLHH[\Big]{\big(I + (t+\tau)P_0S\big)^{-1}\Big[ (I + tP_0S) - \big(I+(t+\tau)P_0S \big) \Big] (I + tP_0S)^{-1}P_0 ) } \\
                         &\quad= \normLHH[\big]{-\tau \big( I + (t+\tau)P_0S \big)^{-1} P_0S (I + tP_0S)^{-1}P_0 } \\
                         &\quad\le \tau\normLHH{(I + (t+\tau)P_0S)^{-1}} \normLHH{P_0S} \normLHH{(I + tP_0S)^{-1}} \normLHH{P_0S}  \\
                         &\quad\le \tau \big(1 + (t+\tau)\normLHH{P_0} \normLHH{S} \big) \big(1 + t \normLHH{P_0} \normLHH{S} \big) \normLHH{P_0}^2 \\
&\quad\le \tau \, C(t, \normLHH{P_0}, \normLHH{S}),
\end{align*}
and $t \mapsto P(t)$ is therefore continuous in $\cL(H)$. By the same construction we obtain that
\begin{equation*}
\lim_{\tau  \to 0}\normLHH{(P(t+\tau) - P(t))/\tau + P(t)SP(t) } = 0.
\end{equation*}
The function $t \mapsto P(t)$ is thus continuously differentiable and satisfies the equation, i.e., $P(t)=\exp{tG}P_0$. The bound for the higher derivatives follows by induction and the chain rule, since e.g.
\begin{align*}
  \ddjx{2}{s} \exp{sG}P_0 &= \dds\Big(-\exp{sG}P_0S\exp{sG}P_0\Big) \\
                          &= -\Big(\dds \exp{sG}P_0\Big) S \exp{sG}P_0 - \exp{sG}P_0S\Big(\dds \exp{sG}P_0\Big).
\end{align*}
\end{proof}

We also have the following explicit bounds on the function $G$ and a similar function that appears in the analysis of the Strang splitting method.
\begin{lemma}\label{lemma:LipBounds}
  The functions $G: \cL(H) \to \cL(H)$ and $M: \cL(H) \to \cL(H)$ given by 
  \begin{equation*}
    GP = -PSP\quad \text{and} \quad MP = PSPSP
  \end{equation*}
  are both locally Lipschitz continuous, and 
  \begin{align}
    \normLHH{GP_1 - GP_2} &\leq 2\rho \normLHH{S} \normLHH{P_1 - P_2} \\
    \normLHH{MP_1 - MP_2} &\leq 3\rho^2 \normLHH{S}^2 \normLHH{P_1 - P_2}
  \end{align}
  for all $P_1, P_2$ such that $\normLHH{P_i} \leq \rho$.
\end{lemma}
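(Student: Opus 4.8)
The plan is to prove both inequalities by the standard ``add and subtract'' telescoping identity, and then to deduce local Lipschitz continuity as an immediate corollary. Since $G$ and $M$ are polynomial in $P$ with the fixed operator $S$ inserted between factors, the only tools needed beyond this identity are submultiplicativity of the operator norm and the a priori bound $\normLHH{P_i} \le \rho$. The guiding idea is to rewrite each difference as a sum in which exactly one factor of $P$ is switched from $P_1$ to $P_2$ at a time, so that each summand carries a single factor of $P_1 - P_2$.

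For $G$, I would write
\begin{equation*}
  GP_1 - GP_2 = -\big(P_1 S P_1 - P_2 S P_2\big) = -P_1 S (P_1 - P_2) - (P_1 - P_2) S P_2,
\end{equation*}
and bound each of the two terms by submultiplicativity. The hypothesis $\normLHH{P_i} \le \rho$ turns each summand into $\rho \normLHH{S} \normLHH{P_1 - P_2}$, and adding the two contributions produces the stated factor $2$. For $M$, the same device is applied to a product of three copies of $P$, switching one copy at each step:
\begin{equation*}
  MP_1 - MP_2 = (P_1 - P_2) S P_1 S P_1 + P_2 S (P_1 - P_2) S P_1 + P_2 S P_2 S (P_1 - P_2).
\end{equation*}
Each of the three terms is bounded by $\rho^2 \normLHH{S}^2 \normLHH{P_1 - P_2}$, giving the factor $3$. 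Local Lipschitz continuity then follows at once, since on any bounded subset of $\cL(H)$ one may take $\rho$ to be an upper bound on the norms of its elements, and the two displayed estimates furnish the desired Lipschitz constants there.

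I do not expect any genuine obstacle in this argument: it reduces entirely to choosing the telescoping order and counting terms. The only point requiring a small amount of care is the bookkeeping for $M$, where one should verify that the three-term decomposition is exact -- that is, that the intermediate terms cancel in pairs -- before applying the norm estimates.
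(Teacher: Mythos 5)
Your proposal is correct and follows essentially the same add-and-subtract telescoping argument as the paper; the $G$ estimate is identical. The only cosmetic difference is in $M$: you telescope in three single-factor steps, each contributing $\rho^2\normLHH{S}^2$, whereas the paper telescopes in two steps and reuses the already-proven bound for $G$ (contributing $2\rho^2\normLHH{S}^2 + \rho^2\normLHH{S}^2$); both yield the same constant $3\rho^2\normLHH{S}^2$.
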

\begin{proof}
  Both claims follow from straightforward calculations. First,
  \begin{equation*}
    \begin{split}
      \normLHH{GP_1 &- GP_2} \\
      &\quad \leq \normLHH{P_1 S P_1 - P_1 S P_2 + P_1 S P_2 - P_2 S P_2} \\
      &\quad \leq \normLHH{P_1 S} \normLHH{P_1 - P_2} +  \normLHH{P_1 - P_2} \normLHH{S P_2 } \\
      &\quad \leq 2\rho \normLHH{S} \normLHH{P_1 - P_2},
    \end{split}
  \end{equation*}
  which proves the first bound, and
  \begin{equation*}
    \begin{split}
      \normLHH{MP_1 - &MP_2} \\
      &\leq \normLHH{P_1 S P_1 S P_1 - P_1 S P_2 S P_2 + P_1 S P_2 S P_2 - P_2 S P_2 S P_2 } \\
      &\leq \normLHH{P_1 S} \normLHH{GP_1 - GP_2} \\
      &\quad +  \normLHH{P_1 - P_2} \normLHH{S P_2 S P_2} \\
      &\leq 3\rho^2 \normLHH{S}^2 \normLHH{P_1 - P_2},
    \end{split}
  \end{equation*}
  which proves the second.
\end{proof}

Finally, the solution operators for both subproblems, and thus also the splitting schemes, are invariant on $\SHP$:
\begin{lemma}\label{lemma:schemes_SHP}
  Let Assumption~\ref{ass:operators} be satisfied and let $P \in \SHP$. Then for any $t \ge 0$, also $\exp{tF}P \in \SHP$ and $\exp{tG}P \in \SHP$.  
\end{lemma}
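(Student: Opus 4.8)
The plan is to treat the two solution operators separately, using the explicit representations already available. I will use that, since $H$ is complex, any $R \in \cL(H)$ satisfying $\iprod{Rx,x}\ge 0$ for all $x \in H$ is automatically self-adjoint and hence lies in $\SHP$; it therefore suffices to verify nonnegativity of the quadratic form $x \mapsto \iprod{Rx,x}$ in each case. Both operators reduce to $P$ at $t = 0$, so I restrict to $t > 0$.

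For $\exp{tF}P$ I would use the representation $\exp{tF}P = \exp{t\cA}P + \int_0^t \exp{(t-s)\cA}Q\,\ds$ and treat the two summands. For the first, $\exp{t\cA}P = \exp{tA^*}P\exp{tA}$ together with the identity $(\exp{tA})^* = \exp{tA^*}$ from the proof of Lemma~\ref{lemma:analyticSG} gives $\iprod{\exp{t\cA}Px,x} = \iprod{P\exp{tA}x,\exp{tA}x}\ge 0$. For the integral term $I_t := \int_0^t \exp{(t-s)\cA}Q\,\ds$ I would pass to the strong integral: for fixed $x$ the map $s \mapsto \exp{(t-s)\cA}Qx$ is continuous in $H$, so $I_t x = \int_0^t \exp{(t-s)\cA}Qx\,\ds$ and hence
\[
  \iprod{I_t x, x} = \int_0^t \iprod{Q\exp{(t-s)A}x,\exp{(t-s)A}x}\,\ds \ge 0,
\]
since $Q \ge 0$. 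Adding the two nonnegative quadratic forms shows $\exp{tF}P \in \SHP$.

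For $\exp{tG}P$ I would start from the closed form $\exp{tG}P = (I + tPS)^{-1}P$ of Lemma~\ref{lemma:NlinFlow}. Because a product of self-adjoint operators is in general neither self-adjoint nor positive, I symmetrize. Writing $P = P^{1/2}P^{1/2}$ with the positive square root $P^{1/2}$ and applying the push-through identity $(I + UV)^{-1}U = U(I + VU)^{-1}$ with $U = P^{1/2}$ and $V = tP^{1/2}S$ gives
\[
  \exp{tG}P = P^{1/2}\big(I + tP^{1/2}SP^{1/2}\big)^{-1}P^{1/2}.
\]
Since $S = BB^* \ge 0$, the operator $P^{1/2}SP^{1/2}$ is self-adjoint and positive semidefinite, so $I + tP^{1/2}SP^{1/2}\ge I$ is boundedly invertible with self-adjoint, positive semidefinite inverse $K$. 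Then $\iprod{\exp{tG}Px,x} = \iprod{KP^{1/2}x,P^{1/2}x}\ge 0$, so $\exp{tG}P \in \SHP$.

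The main obstacle is exactly this positivity of $\exp{tG}P$: the factor $(I + tPS)^{-1}$ is neither self-adjoint nor commuting with $P$, so the conclusion is invisible in the raw formula. The symmetrization through $P^{1/2}$ and the push-through identity is the decisive step, recasting the expression in the manifestly nonnegative sandwich form $P^{1/2}KP^{1/2}$. A secondary, more technical point is the interchange of the operator integral with the inner product in the $F$-part, which I handle by passing to the strong (pointwise) integral before testing against $x$; this is needed precisely because $\exp{t\cA}$ is not continuous in operator norm and the operator-valued integral should not be manipulated naively.
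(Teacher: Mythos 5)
Your proof is correct and follows essentially the same route as the paper: the $F$-part via positivity of $\exp{tA^*}P\exp{tA}$ plus the variation-of-constants integral of $Q$, and the $G$-part via the symmetrized sandwich form $P^{1/2}(I + tP^{1/2}SP^{1/2})^{-1}P^{1/2}$ (the paper obtains this from the inversion identity in Lasiecka--Triggiani rather than the push-through identity, and verifies positivity of the inverse by a direct computation rather than the $T \ge I$ argument, but these are cosmetic differences).
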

\begin{proof}
  Since
  \begin{equation*}
    \iprod{\exp{t\cA}Px, x}
    = \iprod{\exp{tA^*}P\exp{tA}x, x}
    = \iprod{P\exp{tA}x, \exp{tA}x}
    \ge 0
  \end{equation*}
  and $(\exp{tA^*}P\exp{tA})^* = \exp{tA^*}P\exp{tA}$ it follows that $\exp{t\cA}P \in \SHP$. Since $Q \in \SHP$, it then follows by the variation-of-constants formula that $\exp{tF}P \in \SHP$.

  For the nonlinear subproblem, we first note that since $P \in \SHP$, there exists an operator $P^{1/2} \in \SHP$ such that $P = P^{1/2} P^{1/2}$. By~\cite[Lemma 2A.1]{LasieckaTriggiani2000}, we have
\begin{equation*}
  (I + tPS)^{-1} = I - tP^{1/2} (I + tP^{1/2} S P^{1/2})^{-1} P^{1/2} S.
\end{equation*}
Since $\exp{tG}P = (I + tPS)^{-1}P$ by Lemma~\ref{lemma:NlinFlow}, this means that 
\begin{align*}
  \exp{tG}P &= P^{1/2} P^{1/2} - P^{1/2} \Big( (I + tP^{1/2} S P^{1/2})^{-1} t P^{1/2} S P^{1/2} \Big) P^{1/2} \\
                 &= P^{1/2} (I + tP^{1/2} S P^{1/2})^{-1} P^{1/2}.
\end{align*}
This expression is self-adjoint and positive semi-definite if $(I + tP^{1/2} S P^{1/2})^{-1}$ is. But if $x \in H$ and $y = (I + tP^{1/2} S P^{1/2})^{-1}x$ then
\begin{align*}
  \iprod{(I + tP^{1/2} S P^{1/2})^{-1}x, x} &= \iprod{y, y + tP^{1/2} S P^{1/2}y} \\
                                         &= \iprod{y, y} + t\iprod{P^{1/2}y, SP^{1/2}y} \ge 0
\end{align*}
as $S$ is positive semi-definite. Thus $\exp{tG}P \in \SHP$.
\end{proof}

\section{Convergence analysis for Lie splitting}\label{sec:analysis_Lie}

We now turn to the convergence analysis and consider first Lie splitting scheme for~\eqref{eq:DREclassical}. We consider an equidistant grid $t_n = n\tau$ with time step $\tau = T/N$ and approximate $P^n \approx P(t_n)$. In the notation of Section~\ref{sec:setting}, the method is then defined by $P^{n+1} = \cLtau P^n$, where the time-stepping operator $\cLtau$ is given by
\begin{align*}
  \cLtau = \exp{\tau F}\exp{\tau G}.
\end{align*}

We start by expressing one step of the method in a more useful way. By expanding the nonlinear flow in a first-order Taylor expansion, we get for any $P\in\SHP$ that
\begin{equation}\label{eq:LieScheme}
  \begin{split}
    \cLtau P &= \exp{\tau \cA} P  + \int_{0}^{\tau} \exp{(\tau - s)\cA} Q \, \ds + \tau \exp{\tau\cA} GP
    + \int_{0}^{\tau} (\tau-s ) \exp{\tau\cA} \ddds \exp{sG}P  \, \ds \\
    &=: L_0(P) +  L_1 + L_2(P) + L_3(P).
  \end{split}
\end{equation}
Next, we similarly reformulate the exact solution. Since Lemma~\ref{lemma:existence} guarantees $P \in \CHk{\infty}{[\eps, T]}$, we can take any $\eps > 0$ and do the following Taylor expansion :
\begin{equation}\label{eq:exactsol21}
  \begin{aligned}
    P(\tau)x &= P(\tau - \eps + \eps ) \\
    &= \exp{(\tau-\eps)\cA}P(\eps)
      + \int_{\eps}^{\tau} \exp{(\tau-s)\cA} \bigl( Q + GP(s) \bigr) \, \ds.
\end{aligned}
\end{equation}
According to Lemma~\ref{lemma:existence}, the operator $\exp{(\tau-s)\cA}$ maps any initial condition into a $\CHk{\infty}{[0, \tau - \eps]}$-function. The composition $s \mapsto \exp{(\tau-s)\cA}GP(s)$ is therefore in $\CHk{\infty}{[\eps, \tau - \eps]}$, since $s \mapsto GP(s) \in \CHk{\infty}{[\eps, T]}$ like $P$ itself.
We can therefore expand $\exp{(\tau-s)\cA}GP(s)$ too, which yields
\begin{equation*}
\begin{split}
    P(\tau) &= \exp{(\tau-\eps)\cA} P(\eps)
        + \int_{\eps}^{\tau} \exp{(\tau-s)\cA} Q  \, \ds
        + (\tau-\eps ) \exp{(\tau-\eps)\cA} GP(\eps) \\
        &\quad+ \int_{\eps}^{\tau-\eps} \int_{\eps}^{s} \ddr \Big(\exp{(\tau-r)\cA} GP(r) \Big) \, \dr \ds
        + \int_{\tau-\eps}^{\tau} \exp{(\tau-s)\cA} GP(s) \, \ds \\
    &=: I_0(\eps,P_0) + I_1(\eps,P_0)  + I_2(\eps,P_0) + I_3(\eps,P_0) + I_4(\eps, P_0).
\end{split}
\end{equation*}
Now let $x \in H$. Since $t\mapsto P(t)x$ is continuous, the first three terms satisfy 
\begin{equation*}
  \lim_{\eps \rightarrow 0^+} I_j(\eps,P_0)x = L_j(P_0)x\quad\text{ for }j = 0, 1, 2.
\end{equation*}
That is, the first three terms agree with the corresponding terms in the expansion of the scheme given in~\eqref{eq:LieScheme}. Further,
\begin{equation*}
  \maxnorm{s \mapsto \exp{(\tau-s)\cA} GP(s)} \le C\maxnorm{P}^2,
\end{equation*}
by Lemma~\ref{lemma:analyticSG}, so that
\begin{equation*}
\lim_{\eps \rightarrow 0^+} \normLHH{I_4(\eps, P_0)} = 0.
\end{equation*}
As a consequence, also $\lim_{\eps \rightarrow 0^+} I_3(\eps, P_0)x$ exists and equals $R(0,x)$, where
\begin{equation*}
R(t,x) = \int_{0}^{\tau} \int_{0}^{s} \ddr \Big(\exp{(\tau-r)\cA} GP(t + r) x\Big) \, \dr \ds
\end{equation*}
for $t\in[0,T-\tau]$. We can do the same expansions around $P(k\tau)$ instead of $P_0$ and thus have that
\begin{equation}\label{eq:Lie_expansion}
P((k+1)\tau)x = (L_0 + L_1 + L_2)(P(k\tau))x + R(k\tau,x),
\end{equation}
Ideally, we would now argue that $\norm{R(t,x) - L_3(P(k\tau))x} \le C\tau^2\norm{x}$, which would give us a bound for the local error of the method. However, this does not necessarily hold under only Assumption~\ref{ass:operators}. Thus, we now introduce the following extra assumptions:
\begin{assumption}\label{ass:P0regular}
  The initial condition $P_0 \in \domain{\cA}$.
\end{assumption}
\begin{assumption}\label{ass:Sregular}
 The operators $S$ and $AS$ both belong to $\cL(H)$.
\end{assumption}
\noindent Either Assumption~\ref{ass:P0regular} or Assumption~\ref{ass:Sregular} will lead to first-order convergence, except for a $|\log \tau|$-factor in the former case. This factor arises from using the analyticity properties of $\exp{tA}$ to essentially ignore one application of $\cA$, which allows us to only assume $P_0 \in \domain{\cA}$ rather than the usual $P_0 \in \domain{\cA^2}$. If the latter does hold, the proof can be modified to remove the $|\log \tau|$-factor.

\subsection{Lie splitting under Assumption~\ref{ass:P0regular}}\label{subsec:Lie_ass2}
Our approach in the case of Assumption~\ref{ass:P0regular} is similar to the one used in~\cite{OstermannPiazzolaWalach2018} for the matrix-valued case. We note first that
since the solution $P$ to~\eqref{eq:DREclassical} is in $\CpHk{1}{[0,T]}$ by Lemma~\ref{lemma:regularity}, there exists a $\gamma>0$ such that 
\begin{equation*}
  \max\{\maxnorm{P},\maxnorm{\dot{P}}\}\leq \gamma. 
\end{equation*}
We start with the following preliminary result:
\begin{lemma}\label{lemma:Lie1prelresult}
Let Assumptions~\ref{ass:operators} and~\ref{ass:P0regular} be fulfilled and let $P$ be the solution to~\eqref{eq:DREclassical}. Then the equality 
\begin{equation*}
R(t,x)=\int_{0}^{\tau} \int_{0}^{s} \lambda_1(t,r)x\, \dr \ds + \int_{0}^{\tau} \int_{0}^{s} \lambda_2(t,r)x  \, \dr \ds
\end{equation*}
of, improper, integrals is well defined for all $t\in [0,T-\tau]$ and $x\in H$. Here,
 \begin{align*}
    \lambda_1(t,r) &= \cA\exp{(\tau-r)\cA}  \big(P(t+r)SP(t+r)\big)\quad\text{and} \\
    \lambda_2(t,r) &=  -\exp{(\tau-r)\cA} \big(\dot{P}(t+r)SP(t+r) + P(t+r)S\dot{P}(t+r)\big).
  \end{align*}
\end{lemma}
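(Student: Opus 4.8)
The plan is to expand the $r$-derivative inside the definition of $R(t,x)$ by the product rule, recognize the two resulting summands as $\lambda_1$ and $\lambda_2$, and then check that the two double integrals obtained in this way converge as (possibly improper) integrals. Throughout I use that $GP(t+r) = -P(t+r)SP(t+r)$, and that Assumption~\ref{ass:P0regular} places us in the setting of Lemma~\ref{lemma:regularity}, so that $P \in \CpHk{1}{[0,T]}$ and $\max\{\maxnorm{P},\maxnorm{\dot P}\} \le \gamma$.

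First I would fix $t$ and $x \in H$ and differentiate $r \mapsto \exp{(\tau-r)\cA}GP(t+r)x$ for $r \in [0,\tau)$ in the strong ($H$-valued) sense. Splitting the difference quotient in the usual way into a piece where only the argument $GP(t+r)$ varies and a piece where only the semigroup varies, the first piece converges to $\exp{(\tau-r)\cA}\ddr\big(GP(t+r)\big)x$; here I use that $t \mapsto P(t)x$ is continuously differentiable with bounded derivative, that $S$ is bounded, and the uniform boundedness and strong continuity of $\exp{t\cA}$ from Lemma~\ref{lemma:analyticSG}. Since $\ddr\big(GP(t+r)\big) = -\big(\dot P(t+r)SP(t+r) + P(t+r)S\dot P(t+r)\big)$, this piece is exactly $\lambda_2(t,r)x$. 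The second piece is the derivative of the semigroup acting on the fixed operator $GP(t+r)$, which for $r < \tau$ equals $-\cA\exp{(\tau-r)\cA}GP(t+r)x$; carrying the sign of $GP=-PSP$ through gives precisely $\lambda_1(t,r)x$. Hence the integrand of $R$ equals $\lambda_1(t,r)x + \lambda_2(t,r)x$ pointwise in $r \in [0,\tau)$.

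It then remains to verify that each resulting double integral is well defined. The term $\lambda_2$ is harmless: by Lemma~\ref{lemma:analyticSG} and $\max\{\maxnorm{P},\maxnorm{\dot P}\}\le\gamma$ its integrand is bounded uniformly in $r$, so $\int_0^\tau\int_0^s\lambda_2(t,r)x \dr \ds$ is an ordinary Bochner integral. The term $\lambda_1$ is the delicate one, since $\normLHH{\cA\exp{(\tau-r)\cA}V}\le \frac{C}{\tau-r}\normLHH{V}$ blows up as $r \to \tau$, so that $\int_0^s \lambda_1(t,r)x \dr$ is genuinely improper at $s=\tau$. Using $\normLHH{\lambda_1(t,r)}\le \frac{C}{\tau-r}\maxnorm{P}^2\normLHH{S}$, however, the bound integrates to a finite quantity over the triangle $\{0\le r\le s\le\tau\}$, because $\int_0^\tau\int_0^s \frac{1}{\tau-r}\dr\ds = \tau$. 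Thus $\lambda_1(t,r)x$ is absolutely integrable there, the improper double integral converges, and by linearity the sum of the two double integrals equals $R(t,x)$.

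The step I expect to be the main obstacle is the rigorous differentiation: because $\exp{t\cA}$ is only strongly continuous and $\cA$ is unbounded, the product rule must be carried out for the $H$-valued maps $r\mapsto(\,\cdot\,)x$ rather than in operator norm, and one must ensure that the singular factor $\cA\exp{(\tau-r)\cA}$ is only ever formed after the semigroup has smoothed its argument, i.e.\ while $\tau-r>0$. The remaining quantitative point --- that the seemingly divergent inner integral of $\lambda_1$ becomes admissible once integrated against $\ds$ --- rests entirely on the integrability of the $1/(\tau-r)$ singularity established by Lemma~\ref{lemma:analyticSG}.
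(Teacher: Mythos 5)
Your proposal is correct, and the skeleton is the same as the paper's: differentiate $r\mapsto\exp{(\tau-r)\cA}GP(t+r)x$ pointwise to obtain $\lambda_1+\lambda_2$, then observe that $\lambda_2$ is uniformly bounded by $C\maxnorm{P}\maxnorm{\dot P}\normLHH{S}\le C(\gamma)$ thanks to $P\in\CpHk{1}{[0,T]}$, so its double integral is an ordinary Bochner integral. Where you diverge is in how the $\lambda_1$ integral is shown to be well defined. The paper argues indirectly: since $R(t,x)$ exists (as the $\eps\to 0^+$ limit established before the lemma) and the $\lambda_2$ integral exists, the limit $\lim_{\eps\to 0^+}\int_\eps^{\tau-\eps}\int_\eps^s\lambda_1(t,r)x\,\dr\ds$ exists as their difference, which is all that is claimed. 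You instead prove absolute convergence directly from $\normLHH{\lambda_1(t,r)}\le\frac{C}{\tau-r}\gamma^2\normLHH{S}$ together with $\int_0^\tau\int_0^s\frac{1}{\tau-r}\,\dr\ds=\tau$. Both are valid; your route gives the slightly stronger conclusion that the $\lambda_1$ integral converges absolutely rather than merely as a symmetric limit, and it anticipates the identity the paper itself invokes in the proof of Lemma~\ref{lemma:Lie1Cons} (where it is noted that this crude $1/(\tau-r)$ bound, while sufficient for well-definedness here, is not sharp enough for the accumulated error estimate). The paper's subtraction argument is shorter and requires no estimate on $\lambda_1$ at all. Your closing remark about forming $\cA\exp{(\tau-r)\cA}$ only for $\tau-r>0$ and differentiating in the strong sense is exactly the caution the paper exercises by restricting to $t+r>0$, $0\le r<\tau$, where Lemma~\ref{lemma:existence} even gives smoothness in operator norm.
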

\begin{proof}
Lemma~\ref{lemma:existence} and the chain rule gives that 
\begin{equation*}
\ddr \Big(\exp{(\tau-r)\cA} GP(t + r) \Big)=\lambda_1(t,r)+\lambda_2(t,r)
\end{equation*}
for $t+r > 0$ with $0 \le r < \tau$, where all three functions are in $\CHk{\infty}{[\eps, \tau - \eps]}$ with respect to $r$. Since $P \in \CpHk{1}{[0,T]}$, one has the bound
\begin{equation}\label{eq:I2bound}
 \sup_{r\in (0,\tau)}\normLHH{\lambda_2(t,r)} \le C \maxnorm{P} \maxnorm{\dot{P}} \normLHH{S} \le C(\gamma)
\end{equation}
for all $t\in [0,T-\tau]$. That is, $r\mapsto \lambda_2(t,r)$ is uniformly bounded and the integral $\int_{0}^{\tau} \int_{0}^{s} \lambda_2(t,r)x\, \dr \ds$ is therefore well defined. Furthermore,
\begin{equation*}
\lim_{\eps \rightarrow 0^+} \int_{\eps}^{\tau-\eps}\int_{\eps}^{s} \lambda_1(t,r)x\, \dr \ds = R(t,x)-\int_{0}^{\tau} \int_{0}^{s} \lambda_2(t,r)x\, \dr \ds.
\end{equation*}
Hence, the integral $\int_{0}^{\tau} \int_{0}^{s} \lambda_1(t,r)x\, \dr \ds$ is also well defined and that the sought after equality holds. 
\end{proof}

We can now prove consistency for the Lie scheme: 
\begin{lemma}\label{lemma:Lie1Cons}
Let Assumptions~\ref{ass:operators} and~\ref{ass:P0regular} be fulfilled and let $P$ be the solution to~\eqref{eq:DREclassical}.
If $\normLHH{\cLtau^kP_0} \leq \gamma+1$ for all $k=0,\ldots,n$, then 
\begin{equation*}
    \normLHH{P\bigl((n+1)\tau\bigr) - \cLtau^{n+1}P_0} \leq D\tau(1 + \vert \log \tau \vert),
  \end{equation*}
where $D=D(\gamma)$.  
\end{lemma}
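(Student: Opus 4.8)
The plan is to use a discrete variation-of-constants (Lady Windermere's fan) argument, but crucially to keep the analytic semigroup $\exp{(n-k)\tau\cA}$ explicit in the error propagation rather than immediately passing to the $\cL(H)$-norm; it is precisely the smoothing supplied by this semigroup that will produce the $\vert\log\tau\vert$ factor. Writing $e^n = P(n\tau) - \cLtau^n P_0$ and subtracting the scheme expansion~\eqref{eq:LieScheme} from the exact expansion~\eqref{eq:Lie_expansion}, I would first derive the one-step recursion
\begin{equation*}
  e^{n+1} = \exp{\tau\cA} e^n + \eta^n + d^n,
\end{equation*}
where $\eta^n = \tau\exp{\tau\cA}\big(GP(n\tau) - G\cLtau^n P_0\big) + \big(L_3(P(n\tau)) - L_3(\cLtau^n P_0)\big)$ collects the terms that are Lipschitz in the error, and $d^n = R(n\tau,\cdot) - L_3(P(n\tau))$ is the defect formed entirely from the exact solution. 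The hypothesis $\normLHH{\cLtau^k P_0}\le\gamma+1$ together with $\maxnorm{P}\le\gamma$ fixes a common bounded set, so Lemma~\ref{lemma:LipBounds} and the local Lipschitz continuity of the nonlinear flow (hence of $L_3$) give $\normLHH{\eta^n}\le C(\gamma)\tau\normLHH{e^n}$.

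Unrolling the recursion with $e^0 = 0$ yields $e^{n+1} = \sum_{k=0}^n \exp{(n-k)\tau\cA}(\eta^k + d^k)$. The uniform bound $\normLHH{\exp{(n-k)\tau\cA}P}\le C\normLHH{P}$ from Lemma~\ref{lemma:analyticSG} turns the $\eta$-sum into $C(\gamma)\tau\sum_{k=0}^n\normLHH{e^k}$, to be absorbed by a discrete Gronwall inequality at the very end. Everything therefore reduces to bounding $\sum_{k=0}^n\normLHH{\exp{(n-k)\tau\cA}d^k}$ by $D_0(\gamma)\tau(1+\vert\log\tau\vert)$.

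For this I would invoke Lemma~\ref{lemma:Lie1prelresult} to split $d^k$ into the $\lambda_2$-part together with $L_3(P(k\tau))$, and the $\lambda_1$-part. The former is uniformly $O(\tau^2)$ in $\cL(H)$ by~\eqref{eq:I2bound}, Lemma~\ref{lemma:NlinFlow} and the boundedness of $\exp{\tau\cA}$, so after applying $\exp{(n-k)\tau\cA}$ these terms contribute at most $C(\gamma)\tau^2(n+1)\le C(\gamma)\tau T$ to the sum. The delicate term is the singular part $\int_0^\tau\int_0^s\lambda_1(k\tau,r)\,\dr\ds$, with $\lambda_1(k\tau,r) = \cA\exp{(\tau-r)\cA}\big(P(k\tau+r)SP(k\tau+r)\big)$, whose $\cL(H)$-norm is only $O(\tau)$ and would defeat a naive summation. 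Here the key observation is that $\exp{(n-k)\tau\cA}\lambda_1(k\tau,r) = \cA\exp{((n-k+1)\tau-r)\cA}\big(P(k\tau+r)SP(k\tau+r)\big)$, so that for $k<n$ the composed exponent is bounded below by $(n-k)\tau$ and Lemma~\ref{lemma:analyticSG} gives $\normLHH{\exp{(n-k)\tau\cA}\lambda_1(k\tau,r)}\le C(\gamma)/((n-k)\tau)$. Integrating this over the triangle $0\le r\le s\le\tau$ produces a contribution $\le C(\gamma)\tau/(n-k)$, while the diagonal term $k=n$ is handled directly by the integrable singularity $1/(\tau-r)$ and gives $O(\tau)$. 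Summing the harmonic series $\sum_{k=0}^{n-1}1/(n-k)\le 1+\log n\le 1+\log(T/\tau)$ then yields exactly the $C(\gamma)\tau(1+\vert\log\tau\vert)$ bound.

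Finally I would combine the two sums into $\normLHH{e^{n+1}}\le C(\gamma)\tau\sum_{k=0}^n\normLHH{e^k} + D_0(\gamma)\tau(1+\vert\log\tau\vert)$ and apply the discrete Gronwall lemma to obtain the claim with $D = \exp{C(\gamma)T}D_0(\gamma)$. The main obstacle is the third step: recognizing that the $\lambda_1$-defect cannot be controlled step-by-step in operator norm and must instead be estimated only \emph{after} being smoothed by the propagator $\exp{(n-k)\tau\cA}$, trading one application of $\cA$ for a factor $1/((n-k)\tau)$ whose summation is logarithmic. This also requires some care in combining the improper double integral of Lemma~\ref{lemma:Lie1prelresult} with the semigroup, which for $k<n$ is harmless since the composed exponent stays bounded away from zero and the integrand becomes genuinely bounded.
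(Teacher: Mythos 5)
Your proposal is correct and follows essentially the same route as the paper: unroll the error recursion keeping the propagator $\exp{(n-k)\tau\cA}$ explicit, split the defect via Lemma~\ref{lemma:Lie1prelresult}, treat the $k=n$ term of the $\lambda_1$-sum by the integrable singularity and the remaining terms by commuting $\cA$ onto $\exp{(n-k)\tau\cA}$ to get the harmonic sum and hence the $\vert\log\tau\vert$ factor, then conclude by discrete Gr\"onwall. The only (harmless) deviation is that you add and subtract $L_3(P(n\tau))$ and invoke a Lipschitz bound for $L_3$, whereas the paper simply estimates $L_3(\cLtau^k P_0)$ directly as $O(\tau^2)$ via Lemma~\ref{lemma:NlinFlow}, which avoids having to prove that Lipschitz property.
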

\begin{proof}
First observe that the hypothesis implies that the first $n+1$ errors, i.e.\ 
\begin{equation*}
E_k=P(k\tau) - \cLtau^{k}P_0,\quad \text{with }k=0,\ldots,n, 
\end{equation*}
are all elements in $\cL(H)$. By~\eqref{eq:Lie_expansion} and~\eqref{eq:LieScheme} it follows that
\begin{align*}
  E_{n+1}x &= P\bigl((n+1)\tau\bigr)x - \cLtau^{n+1}P_0x \\
            &= L_0\bigl(P(n\tau) - \cLtau^{n} P_0 \bigr)x + \bigl(L_2(P(n\tau)) - L_2(\cLtau^{n}P_0) \bigr)x  \\
       &\quad + R(n\tau,x) - L_3(\cLtau^{n} P_0)x 
\end{align*}
Noting that the first term is $\exp{\tau\cA}E_{n}x$, we can repeatedly apply this equality and obtain the error representation
\begin{equation*}
  \begin{split}
    E_{n+1}x&= L_0(E_0)x + \sum_{k=1}^{n} \exp{(n-k)\tau \cA}  \bigl( L_2(P(k\tau)) - L_2(\cLtau^{k}P_0) \bigr)x \\
    &\quad+ \sum_{k=0}^{n} \exp{(n-k)\tau \cA }\bigl(R(k\tau,x) - L_3(\cLtau^{k} P_0)\bigr)x.
  \end{split}
\end{equation*}
Since $E_0 = 0$, this representation gives the bound 
  \begin{equation}\label{eq:error_recursion_norm}
    \begin{split}
      \norm{ E_{n+1}x}
        &\leq C\sum_{k=1}^{n} \norm[\big]{\bigl( L_2(P(k\tau)) - L_2(\cLtau^k P_0)  \bigr)x} 
        + C\sum_{k=0}^{n} \norm[\big]{L_3(\cLtau^kP_0)x} \\
        &\quad+ \norm[\bigg]{  \sum_{k=0}^{n} \exp{\tau(n-k)\cA } R(k\tau,x)} =: K_1+K_2+K_3.
      \end{split}
  \end{equation}    
As $L_2(P) = \tau\exp{\tau\cA}GP$, the hypothesis and Lemma~\ref{lemma:LipBounds} with $\rho = \gamma+1$ yield the bound
\begin{equation*}
K_1\leq C(\gamma)\, \tau \sum_{k=1}^{n} \normLHH{E_k}\norm{x}.
\end{equation*}
Furthermore, Lemma~\ref{lemma:NlinFlow} with $\rho = \gamma + 1$, the trivial bound $k\tau\leq T$, and the hypothesis give 
\begin{equation*}
K_2\leq C(\gamma)\, \tau \norm{x}.
\end{equation*}
In order to bound $K_3$, we recall Lemma~\ref{lemma:Lie1prelresult} and obtain
\begin{equation*}
K_3 \le K_{31} + K_{32},
\end{equation*}
where
\begin{equation*}
  K_{3j} = \norm[\bigg]{ \sum_{k=0}^{n} \exp{\tau(n-k)\cA } \int_{0}^{\tau} \int_{0}^{s} \lambda_j(k\tau,r)x   \, \dr \ds }, \quad j = 1, 2,
\end{equation*}
with $\lambda_j$ given in Lemma~\ref{lemma:Lie1prelresult}. By~\eqref{eq:I2bound} one has the bound
\begin{equation*}
  K_{32} \le C\sum_{k=0}^{n}\norm[\bigg]{ \int_{0}^{\tau} \int_{0}^{s} \lambda_2(k\tau,r)x \, \dr \ds }\le C(\gamma)\tau \norm{x}.
\end{equation*}
To bound $K_{31}$, we note that
\begin{equation*}
  \norm{\lambda_1(t,r)x} \le \frac{C}{\tau-r} \maxnorm{P}^2\normLHH{S} \norm{x}.
\end{equation*}
Directly using the equality $\int_{0}^{\tau} \int_{0}^{s} \frac{1}{\tau-r} \, \dr \ds = \tau$ on each of the summands does not result in a good enough bound. We only do this for the last summand with $k=n$, i.e.\
  \begin{equation*}
    \norm[\bigg]{\int_{0}^{\tau} \int_{0}^{s} \lambda_1(n\tau,r)x \,\dr \ds} \leq C(\gamma)\tau \norm{x}.
  \end{equation*}
  For the remaining summands, we move the outer exponential into the integrals and interchange it with the one in the integrand by using the relation~$\cA\exp{t\cA} = \exp{t\cA}\cA$. Lemma~\ref{lemma:analyticSG} then gives the bound
    \begin{equation*}
      \begin{split}
        \norm[\big]{\sum_{k=0}^{n-1} &\exp{\tau(n-k)\cA } \int_{0}^{\tau} \int_{0}^{s} \lambda_1(k\tau,r)x \,\dr \ds}\\
        &\leq \sum_{k=0}^ {n-1}  \int_{0}^{\tau} \int_{0}^{s} \norm[\big]{ \exp{(\tau - r)\cA}\cA \exp{\tau(n-k)\cA }  GP(k\tau + r)x} \, \dr \ds \\
    &\leq C \sum_{k=0}^ {n-1}  \int_{0}^{\tau} \int_{0}^{s} \norm[\big]{
       \cA \exp{\tau(n-k)\cA } GP(k\tau + r)x} \, \dr \ds \\
    &\leq C\sum_{k=0}^ {n-1}  \int_{0}^{\tau} \int_{0}^{s}
        \frac{1}{(n-k)\tau} \norm{ GP(k\tau + r)x} \, \dr \ds \\
    &\leq C \maxnorm{GP}\,\tau^2 \sum_{k=0}^{n-1} \frac{1}{(n-k)\tau}\norm{x}\\
    &\leq C(\gamma)\, \tau \bigl(1 + \log n\bigr)\norm{x}.
    \end{split}
  \end{equation*}
In total, 
  \begin{equation*}
  \begin{aligned}
      \normLHH{E_{n+1}}&\leq\sup_{\norm{x}=1}K_1+K_2+K_3\\
      &\leq C(\gamma)\bigl(\tau\sum_{k=0}^{n} \normLHH{E_k} + \tau(1 + \vert \log \tau \vert)\bigr).
    \end{aligned}
\end{equation*}
The sought after error bound then follows by a discrete Grönwall inequality, see e.g.~\cite[Theorem~4.1]{Emmrich1999}.
\end{proof}

\begin{remark}
  We note that the condition $\normLHH{\cLtau^kP_0} \leq \gamma+1$ is an arbitrary choice which could be replaced by $\normLHH{\cLtau^kP_0} \leq \gamma + \rho$ for any $\rho > 0$ in order to tune the error constants.
\end{remark}

\begin{theorem}\label{thm:Lie1Bound}
Let Assumptions~\ref{ass:operators} and~\ref{ass:P0regular} be fulfilled and let $P$ be the solution to~\eqref{eq:DREclassical}.
Then
\begin{equation*}
    \normLHH{P(n\tau) - \cLtau^nP_0} \leq D\tau(1 + \vert \log \tau \vert)\quad\text{for all }\tau \leq \tau^*,
\end{equation*}
where the constants $D, \tau^*$ only depend on the solution $(\maxnorm{P},\maxnorm{\dot{P}})$ and the problem data $(T, \normLHH{S}, \normLHH{Q})$.
\end{theorem}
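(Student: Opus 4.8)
The plan is to combine the conditional consistency estimate of Lemma~\ref{lemma:Lie1Cons} with an \emph{a priori} boundedness argument for the numerical iterates, carried out by induction on $n$. The only gap between Lemma~\ref{lemma:Lie1Cons} and the theorem is the standing hypothesis $\normLHH{\cLtau^k P_0} \le \gamma + 1$ on the iterates, so the entire task is to show that this hypothesis propagates automatically once $\tau$ is small enough. The crucial structural observation is that $\gamma$ is fixed in advance by $\max\{\maxnorm{P},\maxnorm{\dot P}\} \le \gamma$, and Lemma~\ref{lemma:Lie1Cons} was stated with $\rho = \gamma+1$ already built in; hence the constant $D = D(\gamma)$ it produces is independent of both $\tau$ and $n$. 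This independence is exactly what keeps the bootstrapping argument from being circular, and I expect verifying it to be the only real subtlety.

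First I would fix $\tau^*$. Since the map $\tau \mapsto \tau(1 + \vert\log \tau\vert)$ is increasing on $(0,1)$ (its derivative is $-\log\tau > 0$ there) and tends to $0$ as $\tau \to 0^+$, there is a $\tau^* \in (0,1)$ with $D\tau^*(1 + \vert\log\tau^*\vert) \le 1$, and then $D\tau(1+\vert\log\tau\vert) \le 1$ for every $\tau \le \tau^*$. Both $D$ and $\tau^*$ then depend only on $\gamma$, hence on $(\maxnorm{P},\maxnorm{\dot P})$, and on the problem data, as claimed.

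The induction hypothesis at level $n$ is that $\normLHH{\cLtau^k P_0} \le \gamma + 1$ for all $k = 0, \ldots, n$. The base case $n=0$ holds because, using that $t \mapsto P(t)x$ is continuous on $[0,T]$ under Assumption~\ref{ass:P0regular}, one has $\norm{P_0 x} = \lim_{t\to 0^+}\norm{P(t)x} \le \maxnorm{P}\norm{x}$, so $\normLHH{\cLtau^0 P_0} = \normLHH{P_0} \le \maxnorm{P} \le \gamma$. For the inductive step, the hypothesis is precisely what licenses the application of Lemma~\ref{lemma:Lie1Cons}, which gives $\normLHH{P((n+1)\tau) - \cLtau^{n+1}P_0} \le D\tau(1+\vert\log\tau\vert)$. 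A triangle inequality, together with $\normLHH{P((n+1)\tau)} \le \maxnorm{P} \le \gamma$ and the choice of $\tau^*$, then yields
\[
\normLHH{\cLtau^{n+1}P_0} \le \normLHH{P((n+1)\tau)} + D\tau(1+\vert\log\tau\vert) \le \gamma + 1 .
\]
This extends the iterate bound to step $n+1$ and closes the induction.

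Finally, since every index $k$ with $k\tau \le T$ is eventually reached by this induction, the hypothesis of Lemma~\ref{lemma:Lie1Cons} is satisfied at every step for all $\tau \le \tau^*$, and its conclusion $\normLHH{P(n\tau) - \cLtau^nP_0} \le D\tau(1+\vert\log\tau\vert)$ therefore holds for all such $n$ and $\tau$, which is exactly the assertion of the theorem. As noted, the main obstacle is conceptual rather than computational: one must confirm that the error constant does not degrade with the assumed bound on the iterates, and this is guaranteed here by the prior fixing of $\rho = \gamma + 1$; the rest is the standard promotion of a local estimate to a global one via discrete Grönwall, already performed inside Lemma~\ref{lemma:Lie1Cons}.
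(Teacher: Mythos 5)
Your proposal is correct and follows essentially the same bootstrap argument as the paper: fix $\tau^*$ so that $D\tau(1+\vert\log\tau\vert)\le 1$, then induct, using the consistency estimate of Lemma~\ref{lemma:Lie1Cons} together with the triangle inequality to propagate the a priori bound $\normLHH{\cLtau^k P_0}\le\gamma+1$ on the iterates. The only cosmetic difference is that the paper phrases the induction hypothesis in terms of the error bound $\normLHH{E_k}\le D\tau(1+\vert\log\tau\vert)$ rather than the iterate bound, which is logically equivalent.
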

\begin{proof}
With the notation from Lemma~\ref{lemma:Lie1Cons}, there exists an $\tau^*$ such that
\begin{equation*}
D\tau(1 + \vert \log \tau \vert)\leq 1\quad\text{for all }\tau \leq \tau^*.
\end{equation*}
Let $\tau \leq \tau^*$ and assume that $\normLHH{E_{k}}\leq D\tau(1 + \vert \log \tau \vert)$ for $k=0,\ldots,n$. Then 
\begin{equation*}
\normLHH{\cLtau^kP_0} \leq \normLHH{P(k\tau)}+\normLHH{E_{k}}\leq\gamma+1
\end{equation*}
and  Lemma~\ref{lemma:Lie1Cons} implies that $\normLHH{E_{n+1}}\leq D\tau(1 + \vert \log \tau \vert)$. As $\normLHH{E_{0}}=0$, the first-order convergence follows by induction. 
\end{proof}

\subsection{Lie splitting under Assumption~\ref{ass:Sregular}}

We once more consider the Lie splitting scheme, but with Assumption~\ref{ass:Sregular} rather than Assumption~\ref{ass:P0regular}. In this case, the solution $P$ to~\eqref{eq:DREclassical} guaranteed by Lemma~\ref{lemma:regularity} is only in $\CpH{[0,T]}$ rather than  $\CpHk{1}{[0,T]}$. We therefore set choose a $\gamma>0$ such that
\begin{equation*}
\maxnorm{P}\leq \gamma
\end{equation*}
in this section. We start the analysis by showing that $AS + S A^*$ is bounded and that $G$ is invariant under $\domain{\cA}$.
\begin{lemma}\label{lemma:S}
Let Assumptions~\ref{ass:operators} and~\ref{ass:Sregular} be satisfied. Then $AS + S A^* \in \cL(H)$.
\end{lemma}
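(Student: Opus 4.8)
The plan is to exploit the self-adjointness of $S$ together with the boundedness of $AS$ in order to identify $SA^*$ as the Hilbert-space adjoint of $AS$, and thereby realize $AS + SA^*$ as (the restriction of) a sum of two bounded operators.

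First I would collect the basic facts. By Assumption~\ref{ass:operators} and Remark~\ref{rem:basic_operator_properties}, $S = BB^* \in \SHP$ is self-adjoint, and $A$ is closed and densely defined; consequently $A^*$ is itself densely defined, so $\domain{A^*}$ is dense in $H$. By Assumption~\ref{ass:Sregular} we have $AS \in \cL(H)$, which in particular forces $\operatorname{ran}(S) \subseteq \domain{A}$, so that the composition is genuinely defined, and moreover its Hilbert adjoint $(AS)^* \in \cL(H)$ is a well-defined bounded operator.

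The key step is to evaluate $(AS)^*$ against test elements in $\domain{A^*}$. For arbitrary $x \in H$ and $y \in \domain{A^*}$, using $Sx \in \domain{A}$ and the defining relation of the adjoint $A^*$, I would compute
\begin{equation*}
  \iprod{ASx, y} = \iprod{A(Sx), y} = \iprod{Sx, A^*y} = \iprod{x, S A^* y},
\end{equation*}
where the final equality uses $S = S^*$. Comparing this with the identity $\iprod{ASx, y} = \iprod{x, (AS)^* y}$ and letting $x$ range over the dense space $H$ yields $(AS)^* y = S A^* y$ for every $y \in \domain{A^*}$. In other words, $SA^*$, a priori only defined on $\domain{A^*}$, coincides there with the restriction of the bounded operator $(AS)^*$.

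Finally, on the dense subspace $\domain{A^*}$ we then have $(AS + SA^*)y = ASy + (AS)^*y$, which is simply the restriction of the bounded operator $AS + (AS)^* \in \cL(H)$. Since $AS + SA^*$ thus agrees with a bounded operator on a dense set, it extends uniquely to an element of $\cL(H)$, and indeed $AS + SA^* = AS + (AS)^*$. The only point requiring care is the interchange $\iprod{A(Sx), y} = \iprod{Sx, A^*y}$: this is precisely the definition of $A^*$ and is legitimate exactly because $Sx \in \domain{A}$ and $y \in \domain{A^*}$. Hence the unboundedness of $A$ causes no difficulty, provided one pairs against test elements $y \in \domain{A^*}$ and uses the density of $\domain{A^*}$ to pass back to all of $H$.
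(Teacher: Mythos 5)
Your argument is correct and follows essentially the same route as the paper: both identify $SA^*$ with the bounded operator $(AS)^*$ on the dense subspace $\domain{A^*}$ (using $S=S^*$ and the definition of the adjoint) and then extend by density. You merely spell out the adjoint identity via the inner-product pairing, which the paper leaves implicit.
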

\begin{proof}
  Since $AS \in \cL(H)$, so is $(AS)^*$. Furthermore, $S = S^*$ by Assumption~\ref{ass:operators}, and we have $(AS)^*x = S^*A^*x = SA^*x$ for all $x \in \domain{A^*}$.
  But $\domain{A^*}$ is dense in $H$, e.g.\ by \cite[Lemma 1.10.5]{Pazy1983}, and hence $(AS)^* = SA^*$ on $\cL(H)$.
\end{proof}
\begin{remark}
  The conclusion of Lemma~\ref{lemma:S} is what is needed for the analysis, and could be directly assumed instead of the stronger condition given by Assumption~\ref{ass:Sregular}. However, this would be more difficult to verify, and Assumption~\ref{ass:Sregular} more clearly shows that $S$ is expected to be smoothing.
\end{remark}
\begin{lemma}\label{lemma:PSP} 
    Let Assumptions~\ref{ass:operators} and~\ref{ass:Sregular} be satisfied and let $P \in \domain{\cA}$. Then $GP \in \domain{\cA}$.
\end{lemma}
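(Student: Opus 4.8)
The plan is to verify directly that $GP = -PSP$ satisfies the defining criterion for membership in $\domain{\cA}$: namely, that the bilinear form $\phi_{GP}$ extends to a continuous (bounded) form on $H \times H$. First I would note that $GP \in \SH$, since $(PSP)^* = P^* S^* P^* = PSP$ by the self-adjointness of $P$ and $S$, so $GP$ is at least a candidate.

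The crucial consequence of Assumption~\ref{ass:Sregular} that I would exploit is that $AS \in \cL(H)$ forces $S$ to map all of $H$ into $\domain{A}$, with $\normLHH{ASz} \le \normLHH{AS}\norm{z}$ for every $z$. In particular, for any $x \in \domain{A}$ the element $SPx$ again lies in $\domain{A}$, and $A(SPx) = ASPx$ is controlled by $\normLHH{AS}\normLHH{P}\norm{x}$. This smoothing is exactly what allows the otherwise-unbounded $A$-pairings to be absorbed.

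Concretely, for $x, y \in \domain{A}$ I would start from
\begin{equation*}
\phi_{GP}(x,y) = -\iprod{PSPx, Ay} - \iprod{Ax, PSPy},
\end{equation*}
and, writing $u = SPx \in \domain{A}$ and $v = SPy \in \domain{A}$, invoke the definition $\phi_P(u,y) = \iprod{Pu, Ay} + \iprod{Au, Py}$ (and its analogue in $v$) to replace $\iprod{Pu, Ay}$ and $\iprod{Ax, Pv}$ by $\phi_P$ minus an inner product containing $Au = ASPx$ or $Av = ASPy$. This transforms the expression into
\begin{equation*}
\phi_{GP}(x,y) = -\phi_P(SPx, y) + \iprod{ASPx, Py} - \phi_P(x, SPy) + \iprod{Px, ASPy}.
\end{equation*}
Each of the four terms is now manifestly bounded by $C\norm{x}\norm{y}$: the two $\phi_P$ terms by $\normLHH{\cA P}\normLHH{S}\normLHH{P}\norm{x}\norm{y}$, using the continuity of $\phi_P$ that holds precisely because $P \in \domain{\cA}$, and the other two by $\normLHH{AS}\normLHH{P}^2\norm{x}\norm{y}$. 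Thus $\phi_{GP}$ is bounded on $\domain{A}\times\domain{A}$, extends continuously to $H\times H$, and $GP\in\domain{\cA}$.

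The main obstacle, and the step I would handle most carefully, is the ``integration-by-parts'' manoeuvre transferring the derivative off $y$. The raw term $\iprod{PSPx, Ay}$ has $A$ acting on the free argument $y$ and cannot be estimated directly; the resolution is to recognize $SPx$ as a genuine element of $\domain{A}$ (the content of Assumption~\ref{ass:Sregular}) and feed it into $\phi_P$, which moves the unbounded operator onto $SPx$, where $A(SPx)$ is bounded. Once this swap is justified, the remaining estimates are routine applications of the already-established bounds.
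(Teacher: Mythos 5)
Your proposal is correct and follows essentially the same route as the paper: both rewrite $\phi_{PSP}(x,y)$ in terms of $\phi_P(SPx,y)$ and $\phi_P(x,SPy)$ plus correction terms involving $ASPx$ and $ASPy$, then bound the $\phi_P$ terms by the continuity coming from $P\in\domain{\cA}$ and the corrections by $\normLHH{AS}$. The only cosmetic difference is that the paper groups the correction as $-\iprod{(AS+SA^*)Px,Py}$ (via its Lemma on the boundedness of $AS+SA^*$), whereas you keep the two inner products separate.
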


\begin{proof}
The operator $GP$ is in the domain of $\cA$ if $-GP$ is, and this is equivalent to the associated operator $\phi_{PSP}(x,y)$ being continuous with respect to the norm on $H$. By Remark~\ref{rem:basic_operator_properties} it suffices to show that $\phi_{PSP(x,y)}$ is continuous for $x,y \in \domain{A}$. We have
    \begin{align*}
        \phi_{PSP}(x,y) &= (PSP x, Ay) + (Ax, PSPy) \\
        &= (PSP x, Ay) + (Ax, PSPy)  + (ASPx, Py) + (Px, ASPy) \\
                        &\quad - (ASPx, Py) + (Px, ASPy),
    \end{align*}
    since $AS$ is an element of $\cL(H)$ by Assumption~\ref{ass:Sregular}. Now if we let $x' = SPx$ and $y' = SPy$ we find
    \begin{align*}
      \phi_{PSP}(x,y) &= (Px', Ay) + (Ax, Py') + (Px, Ay') + (Ax', Py) \\
                      &\quad- ((AS + SA^*)Px, Py) \\
                      &= \phi_P(x', y) + \phi_P(x,y') - ((AS + SA^*)Px, Py).
    \end{align*}
    Since $P \in \domain{\cA}$ we know that $\vert \phi_P(x,y) \vert \leq C \norm{x} \norm{y}$, which then implies that
    \begin{align*}
      \vert \phi_{PSP}(x,y) \vert  &\leq \vert \phi_P(x',y) \vert  + \vert \phi_P(x,y') \vert  
        + \normLHH{AS + SA^*}\normLHH{P}^2 \norm{x}\norm{y} \\
      &\leq C(\norm{x'}\norm{y} + \norm{x}\norm{y'}) + C\norm{x}\norm{y} \\
      &\leq 2 C \normLHH{P} \normLHH{S} \norm{x}\norm{y} + C\norm{x}\norm{y}.
    \end{align*}
    Thus $\phi_{PSP}$ is continuous with respect to the norm of $H$, so $GP \in \domain{\cA}$.
\end{proof}

Bounding the rest term $R(k\tau,x)$ is the key step in the proof of Lemma~\ref{lemma:Lie1Cons}. The argument used there relies on $\dot{P}(t)x$ being uniformly bounded, which does not necessarily hold under Assumption~\ref{ass:Sregular} instead of~\ref{ass:P0regular}. The next lemma shows that under Assumption~\ref{ass:Sregular} we can express the integrand of $R(k\tau,x)$ in terms of only $P(t)x$ and thereby avoid this issue.

\begin{lemma}\label{lemma:PSPderivative}
  Let Assumptions~\ref{ass:operators} and~\ref{ass:Sregular} be satisfied and let $P\in\CpH{[0,T]}$~be the solution to~\eqref{eq:DREclassical}. Then for $r \in [0, \tau)$ and $t+r \in (0,T]$ the identification 
  \begin{equation*}
    \ddr \bigl(\exp{(\tau-r)\cA} GP(t+r) \bigr) = \exp{(\tau-r)\cA} f(t + r),
  \end{equation*}
  holds, where $f : [0,T] \to \cL(H)$ is defined by
  \begin{equation}\label{eq:f}
    f(s) = -QSP(s) - P(s)SQ + 2P(s) S P(s) S P(s) - P(s)(AS + S A^*)P(s).
  \end{equation}
  As a consequence,
  \begin{equation}\label{eq:limit_PSP_integral}
    \norm[\big]{R(t,x)}_{\cL(H)} \le C\tau^2\norm{x}
  \end{equation}
  where $C = C(\gamma, \normLHH{AS})$.
\end{lemma}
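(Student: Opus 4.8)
The plan is to first prove the pointwise derivative identity and then integrate it to obtain the bound on $R$. The whole point of the identity is that it expresses $\ddr\bigl(\exp{(\tau-r)\cA}GP(t+r)\bigr)$ through $f$, which involves only $P(t+r)$ together with the bounded operators $Q$, $S$ and $AS+SA^*$; the terms involving $\dot P$ that appear in the naive differentiation (cf.\ Lemma~\ref{lemma:Lie1prelresult}), and which are not uniformly bounded under Assumption~\ref{ass:Sregular}, will cancel.

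First I would fix $r \in [0,\tau)$ with $t + r > 0$. By Lemma~\ref{lemma:existence}, $P(t+r) \in \domain{\cA}$, and Lemma~\ref{lemma:PSP} then gives $GP(t+r) \in \domain{\cA}$; this is exactly what permits commuting $\cA$ with the semigroup. Differentiating the product $r \mapsto \exp{(\tau-r)\cA}GP(t+r)$ — which is legitimate since $\tau - r > 0$ keeps us in the smoothing regime and $s \mapsto GP(s)$ is differentiable in $\cL(H)$ for $s > 0$ by Lemma~\ref{lemma:existence} — and using $\cA\exp{(\tau-r)\cA} = \exp{(\tau-r)\cA}\cA$ on $\domain{\cA}$, I obtain
\[
\ddr\bigl(\exp{(\tau-r)\cA}GP(t+r)\bigr) = \exp{(\tau-r)\cA}\Bigl(-\cA GP(s) + \dds GP(s)\Bigr),
\]
where both terms on the right are evaluated at $s = t+r$ and $\dds GP(s) = -\dot{P}(s)SP(s) - P(s)S\dot{P}(s)$.

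The crux is to rewrite $\cA GP(s) = -\cA\bigl(P(s)SP(s)\bigr)$. Reusing the bilinear-form computation from the proof of Lemma~\ref{lemma:PSP} and tracking the adjoints (with $P = P^*$ and $S = S^*$), I would read off the operator identity
\[
\cA(PSP) = (\cA P)SP + PS(\cA P) - P(AS + SA^*)P.
\]
I would then eliminate $\cA P$, and with it $\dot P$, by inserting the differential equation~\eqref{eq:DREclassical} in the form $\cA P = \dot{P} - Q + PSP$. After substituting and combining with $\dds GP(s)$, the two terms proportional to $\dot P$ cancel exactly, leaving $-\cA GP(s) + \dds GP(s) = f(s)$ with $f$ as in~\eqref{eq:f}, which establishes the identity.

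For the consequence I would substitute the identity into the definition of $R(t,x)$, turning the integrand into $\exp{(\tau-r)\cA}f(t+r)x$. Since $\normLHH{P(s)} \le \gamma$ and $Q$, $S$ and $AS+SA^*$ are bounded (the last by Lemma~\ref{lemma:S}), the operator $f(s)$ is uniformly bounded by some $C(\gamma, \normLHH{AS})$; together with the semigroup bound of Lemma~\ref{lemma:analyticSG} and $\int_{0}^{\tau}\int_{0}^{s} \dr\,\ds = \tau^2/2$ this yields $\normLHH{R(t,x)} \le C\tau^2\norm{x}$. The only delicate point is the endpoint $t = r = 0$, where $P_0$ need not lie in $\domain{\cA}$ and the derivative identity can fail; this is a single value of the integration variable and hence does not affect the integral, but to be safe one can argue through the limit $\eps \to 0^+$ exactly as in Lemma~\ref{lemma:Lie1prelresult}. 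I expect the main work to be the bookkeeping in rewriting $\cA(PSP)$ from the bilinear form and verifying the precise cancellation of the $\dot P$ terms.
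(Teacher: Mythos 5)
Your proposal is correct and follows essentially the same route as the paper: both proofs hinge on the operator identity $\cA(PSP) = (\cA P)SP + PS(\cA P) - P(AS+SA^*)P$ (justified via Lemma~\ref{lemma:PSP} and a density argument), use the differential equation to trade between $\dot P$ and $\cA P$ so that the unbounded/unbounded-in-norm terms cancel, and then integrate the uniformly bounded remainder $f$ to get the $\tau^2$ bound. The only difference is cosmetic — you substitute $\cA P = \dot P - Q + PSP$ into the expanded $\cA(PSP)$, whereas the paper substitutes $\dot P = \cA P + Q - PSP$ into the $\dot P$-terms and cancels against $\lambda_1$ — and your explicit remark about the endpoint $t=r=0$ is a welcome (if minor) extra care.
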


\begin{proof}
  By Lemma~\ref{lemma:existence}, the solution $P$ is an element in $\CHk{\infty}{[\eps, T]}$ and $P(t)\in\domain{\cA}$ for all $t\in [\eps, T]$,  with $\eps>0$. Thus, like in the proof of Lemma~\ref{lemma:Lie1prelresult}, for $t+r > 0$ with $0 \le r < \tau$ we have
  \begin{equation*}
    \ddr \bigl(\exp{(\tau-r)\cA} GP(t+r) \bigr) = \lambda_1(t,r) + \lambda_2(t,r),
  \end{equation*}
  where $\lambda_j$, $j=1,2$, are stated in Lemma~\ref{lemma:Lie1prelresult}. In the following, $P$ and $\dot{P}$ are always evaluated at $t+r$, but we do not always write this out for readability reasons. For the same reason, we drop the arguments for the terms $\lambda_j(t,r)$. 
  
By using the differential equation~\eqref{eq:DREclassical} to expand $\dot{P}(t+r)$ and writing one of the operator semigroups in terms of $\exp{\tau A}$ and $\exp{\tau A^*}$, we get
  \begin{align*}
    \lambda_2 
    &= -\exp{(\tau-r)A^*} \bigl((\cA P) S P    + P S(\cA P)\bigr)\exp{(\tau-r)A} \\
    &\quad- \exp{(\tau-r)\cA} \bigl(QSP + PSQ - 2P S P S P\bigr) 
    = \lambda_{21} + \lambda_{22}.
  \end{align*}
  Due to Lemma~\ref{lemma:PSP}, we can expand the operator $\cA$ in $\lambda_{21}$ in terms of $A$ and $A^*$ and then reassemble it again with a different argument, as follows. First let $x \in \domain{A}$. Then also $\exp{(\tau-r)A}x \in \domain{A}$ and
  \begin{align*}
    \lambda_{21}x &= -\exp{(\tau-r)A^*} \bigl( (A^*P + PA) S P    + P S (A^*P + PA)\bigr)\exp{(\tau-r)A}x \\
           &= -\exp{(\tau-r)A^*} ( A^*PSP + PSPA + PASP + PSA^*P )\exp{(\tau-r)A}x \\
           &= -\exp{(\tau-r)A^*} \bigl( \cA (PSP) + P(AS + SA^*)P \bigl)\exp{(\tau-r)A}x \\
           &= -\cA \exp{(\tau-r)\cA} PSP x - \exp{(\tau-r)\cA}P(AS + SA^*)Px.
  \end{align*}
  In the final equality, we have used the commutativity of $\cA$ and $\exp{t\cA}$. Since the operator $\cA \exp{(\tau-r)\cA} PSP - \exp{(\tau-r)\cA}P(AS + SA^*)P$ is well defined in $\cL(H)$ by Lemmas~\ref{lemma:PSP} and~\ref{lemma:S}, and $\domain{A}$ is dense in $H$ (Remark~\ref{rem:basic_operator_properties}), the above sequence of equalities hold for all $x \in H$. We thus have
  \begin{equation*}
    \lambda_{21} = \cA \exp{(\tau-r)\cA} PSP - \exp{(\tau-r)\cA}P(AS + SA^*)P
  \end{equation*}
  in the sense of $\cL(H)$-operators. We now observe that the first term in $\lambda_{21}$ is in fact $\lambda_1$, but with opposite sign. Thus,
  \begin{equation*}
    \ddr \bigl(\exp{(\tau-r)\cA} GP(t+r)\bigr) = \lambda_{22} - \exp{(\tau-r)\cA}P(t+r)(AS + SA^*)P(t+r),
  \end{equation*}
which is precisely $\exp{(\tau-r)\cA} f(t+r)$. This proves the first claim.

Since $\normLHH{AS + SA^*} \le C$ by Lemma~\ref{lemma:S}, we then directly get by Lemma~\ref{lemma:analyticSG} that the integrand in~\eqref{eq:limit_PSP_integral} satisfies
  \begin{equation*}
      \normLHH{\exp{(\tau-r)\cA}f(t+r)} \le C(\gamma, \normLHH{AS}).
      \end{equation*}
  Since this bound is independent of $r$, and $\tau \leq T$,
  \begin{align*}
    \norm{R(t,x)} &= \norm[\bigg]{\int_{0}^{\tau} \int_{0}^{s} \ddr \Big(\exp{(\tau-r)\cA} GP(t + r) x\Big) \, \dr \ds} \\
                  &\le \int_{0}^{\tau} \int_{0}^{s} \norm{\exp{(\tau-r)\cA} f(t+r)x} \, \dr \ds \\
    &\le C(\gamma, \normLHH{AS})\tau^2\norm{x},
  \end{align*}
  which proves the second claim.
\end{proof}

\begin{lemma}\label{lemma:Lie2Cons}
Let Assumptions~\ref{ass:operators} and~\ref{ass:Sregular} be fulfilled and let $P$ be the solution to~\eqref{eq:DREclassical}.
If $\normLHH{\cLtau^kP_0} \leq \gamma + 1$ for all $k=0,\ldots,n$, then 
\begin{equation*}
    \normLHH{P\bigl((n+1)\tau \bigr) - \cLtau^{n+1}P_0} \leq D\tau,
  \end{equation*}
where $D=D(\gamma, \normLHH{AS})$.  
\end{lemma}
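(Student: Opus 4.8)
The plan is to mirror the structure of the proof of Lemma~\ref{lemma:Lie1Cons}, replacing the regularity of $\dot{P}$ (unavailable under Assumption~\ref{ass:Sregular}) by the bound on the rest term $R(t,x)$ furnished by Lemma~\ref{lemma:PSPderivative}. I would begin exactly as before: the hypothesis $\normLHH{\cLtau^kP_0} \le \gamma+1$ guarantees that the errors $E_k = P(k\tau) - \cLtau^k P_0$ lie in $\cL(H)$ for $k=0,\ldots,n$, and combining the exact expansion~\eqref{eq:Lie_expansion} with the scheme expansion~\eqref{eq:LieScheme} yields the one-step relation $E_{n+1}x = \exp{\tau\cA}E_n x + (L_2(P(n\tau)) - L_2(\cLtau^n P_0))x + R(n\tau,x) - L_3(\cLtau^n P_0)x$. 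Unrolling this recursion and using $E_0 = 0$ gives the same error representation as in Lemma~\ref{lemma:Lie1Cons}, and hence the same split $\norm{E_{n+1}x} \le K_1 + K_2 + K_3$ as in~\eqref{eq:error_recursion_norm}.

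The terms $K_1$ and $K_2$ are handled identically to Lemma~\ref{lemma:Lie1Cons}: since $L_2(P) = \tau\exp{\tau\cA}GP$, Lemma~\ref{lemma:LipBounds} with $\rho = \gamma+1$ gives $K_1 \le C(\gamma)\,\tau\sum_{k=1}^n \normLHH{E_k}\norm{x}$, while Lemma~\ref{lemma:NlinFlow} together with $L_3(\cLtau^k P_0) = \int_0^\tau (\tau-s)\exp{\tau\cA}\ddds\exp{sG}(\cLtau^k P_0)\,\ds$ yields $K_2 \le C(\gamma)\,\tau\norm{x}$. The essential difference lies in $K_3 = \norm{\sum_{k=0}^n \exp{\tau(n-k)\cA}R(k\tau,x)}$. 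Here, rather than decomposing $R$ into $\lambda_1,\lambda_2$ and fighting the singular factor $1/(\tau-r)$ (which produced the $\vert\log\tau\vert$ loss), I would invoke Lemma~\ref{lemma:PSPderivative} directly, which states $\norm{R(k\tau,x)} \le C(\gamma,\normLHH{AS})\tau^2\norm{x}$. Summing over the $n+1$ terms and using $\normLHH{\exp{\tau(n-k)\cA}} \le C$ from Lemma~\ref{lemma:analyticSG} gives $K_3 \le C(n+1)\tau^2\norm{x} \le C(\gamma,\normLHH{AS})\tau\norm{x}$, since $(n+1)\tau \le T+\tau$ is bounded.

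Taking the supremum over $\norm{x}=1$ then produces the recursive inequality $\normLHH{E_{n+1}} \le C(\gamma,\normLHH{AS})\bigl(\tau\sum_{k=0}^n\normLHH{E_k} + \tau\bigr)$, and a discrete Grönwall inequality (e.g.~\cite[Theorem~4.1]{Emmrich1999}) closes the argument with the clean first-order bound $\normLHH{E_{n+1}} \le D\tau$, without any logarithmic factor. The main point to get right is simply that Lemma~\ref{lemma:PSPderivative} already absorbed all the work that previously caused the singularity: because the derivative of the integrand of $R$ has been rewritten as $\exp{(\tau-r)\cA}f(t+r)$ with $f$ uniformly bounded in $\cL(H)$ (no surviving $\cA$ acting on an unsmoothed argument), the rest term is genuinely $O(\tau^2)$ rather than $O(\tau^2\vert\log\tau\vert)$. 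I do not anticipate a serious obstacle here; the only care needed is to confirm that the hypothesis $\normLHH{\cLtau^k P_0}\le\gamma+1$ supplies the uniform norm control required by both Lemma~\ref{lemma:PSPderivative} and the Lipschitz estimates, which it does with $\rho=\gamma+1$, and that $\maxnorm{P}\le\gamma$ is the only solution regularity invoked.
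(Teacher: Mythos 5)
Your proposal is correct and matches the paper's proof essentially verbatim: the error recursion and the bounds on $K_1$ and $K_2$ are carried over unchanged from Lemma~\ref{lemma:Lie1Cons}, and $K_3$ is bounded by applying the $O(\tau^2)$ estimate on $R(k\tau,x)$ from Lemma~\ref{lemma:PSPderivative} to each of the $n+1 \le T/\tau + 1$ summands, after which the discrete Grönwall inequality closes the argument. You also correctly identify that only $\maxnorm{P}\le\gamma$ is needed here, which is exactly the choice of $\gamma$ the paper makes in this subsection.
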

\begin{proof}
  By following exactly the same approach as in the proof of Lemma~\ref{lemma:Lie1Cons}, we find that
  \begin{equation}
    \norm{E_{n+1}x} = K_1+K_2+K_3,
  \end{equation}
  where $K_j$ is defined in~\eqref{eq:error_recursion_norm} and where
  \begin{equation*}
    K_1+K_2\leq C(\gamma)\, \tau \bigl(\sum_{k=1}^{n} \normLHH{E_k}+1\bigr)\norm{x}.
  \end{equation*}
Furthermore, by Lemmas~\ref{lemma:analyticSG} and~\ref{lemma:PSPderivative} and the trivial bound $k\tau\leq T$, we obtain 
\begin{equation*}
  K_3 = \norm[\bigg]{  \sum_{k=0}^{n} \exp{\tau(n-k)\cA} R(k\tau,x)} \leq C(\gamma, \normLHH{AS})\, \tau \norm{x}.
\end{equation*}
Summing up yields
  \begin{equation*}
      \normLHH{E_{n+1}}\leq\sup_{\norm{x}=1}K_1+K_2+K_3\leq C\tau \bigl(\sum_{k=0}^{n} \normLHH{E_k} +1\bigr),
  \end{equation*}
where $C=C(\gamma, \normLHH{AS})$, and the sought after error bound again follows by a discrete Grönwall inequality~\cite[Theorem~4.1]{Emmrich1999}.
\end{proof}

Lemma~\ref{lemma:Lie2Cons} together with the proof of Theorem~\ref{thm:Lie1Bound} then give us our second convergence result:
\begin{theorem}\label{thm:Lie2Bound}
Let Assumptions~\ref{ass:operators} and~\ref{ass:Sregular}  be fulfilled and let $P$ be the solution to~\eqref{eq:DREclassical}.
Then
\begin{equation*}
    \normLHH{P(n\tau) - \cLtau^nP_0} \leq D\tau \quad\text{for all }\tau \leq \tau^*,
\end{equation*}
where the constants $D, \tau^*$ only depend on the solution $(\maxnorm{P})$ and the problem data $(T,\normLHH{S},  \normLHH{Q},\normLHH{AS})$.   
\end{theorem}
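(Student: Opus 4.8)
The plan is to bootstrap the conditional consistency estimate of Lemma~\ref{lemma:Lie2Cons} into an unconditional global error bound by an induction on the step index, exactly mirroring the proof of Theorem~\ref{thm:Lie1Bound} but now with the sharper, logarithm-free local bound available under Assumption~\ref{ass:Sregular}. The only hypothesis of Lemma~\ref{lemma:Lie2Cons} that is not automatically satisfied is the a priori boundedness $\normLHH{\cLtau^kP_0} \le \gamma+1$ of the numerical iterates, and the induction is engineered precisely to furnish this as it proceeds.

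Concretely, I would first fix $D = D(\gamma, \normLHH{AS})$ to be the constant supplied by Lemma~\ref{lemma:Lie2Cons}, and then choose $\tau^*$ small enough that $D\tau \le 1$ for all $\tau \le \tau^*$. For such $\tau$ I claim, by induction on $n$, that $\normLHH{E_n} \le D\tau$, where $E_n = P(n\tau) - \cLtau^n P_0$. The base case is immediate, since $E_0 = 0$. For the inductive step I would assume $\normLHH{E_k} \le D\tau$ for all $k = 0, \ldots, n$; the triangle inequality together with $\maxnorm{P} \le \gamma$ and the defining property of $\tau^*$ then gives $\normLHH{\cLtau^k P_0} \le \normLHH{P(k\tau)} + \normLHH{E_k} \le \gamma + D\tau \le \gamma + 1$ for each such $k$. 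This is exactly the hypothesis needed to invoke Lemma~\ref{lemma:Lie2Cons}, which yields $\normLHH{E_{n+1}} \le D\tau$ and closes the induction. Since $D$ and $\tau^*$ are assembled from $\gamma$ (hence from $\maxnorm{P}$), $\normLHH{AS}$, and the standing data $T, \normLHH{S}, \normLHH{Q}$, the asserted dependence of the constants follows.

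I do not expect a genuine analytic obstacle at this stage: the hard work was already discharged upstream, in Lemma~\ref{lemma:PSPderivative}, where Assumption~\ref{ass:Sregular} is exploited to rewrite the rest term $R(t,x)$ so that $\dot P$ is eliminated entirely and the clean estimate $\normLHH{R(t,x)} \le C\tau^2\norm{x}$ is produced, and in the discrete Grönwall step carried out inside Lemma~\ref{lemma:Lie2Cons}. The one point demanding care is the logical architecture of the bootstrap: the boundedness of the iterates and the smallness of the error are mutually dependent, and the induction resolves this apparent circularity by advancing both properties together, one step at a time. Finally, the disappearance of the $\vert\log\tau\vert$ factor present in Theorem~\ref{thm:Lie1Bound} is simply inherited from the sharper rest-term bound that Assumption~\ref{ass:Sregular} makes available, so no modification of the Grönwall argument is required to obtain the full order one.
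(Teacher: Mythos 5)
Your proposal is correct and follows essentially the same route as the paper: the paper proves Theorem~\ref{thm:Lie2Bound} by invoking Lemma~\ref{lemma:Lie2Cons} inside the identical induction/bootstrap argument used for Theorem~\ref{thm:Lie1Bound}, choosing $\tau^*$ so that $D\tau\le 1$ and advancing the iterate bound $\normLHH{\cLtau^kP_0}\le\gamma+1$ together with the error bound. No gaps.
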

 \begin{remark}
 We note that unlike in the proof of Theorem~\ref{thm:Lie1Bound} we did not use the bound on $\cA\exp{t\cA}$. This was possible because the terms can be rearranged such that all direct applications of $A$ occur in $AS + SA^*$, which is bounded under Assumption~\ref{ass:Sregular}.
\end{remark}

\section{Convergence analysis for Strang splitting}\label{sec:analysis_Strang}

In this section, we analyze the Strang splitting scheme for~\eqref{eq:DREclassical} under the Assumptions~\ref{ass:operators} to~\ref{ass:Sregular}. We consider an equidistant grid $t_n = n\tau$ with time step $\tau = T/N$ and approximate $P^n \approx P(t_n)$. In the notation of Section~\ref{sec:setting}, the method is then defined by $P^{n+1} = \cStau P^n$, where the time-stepping operator $\cStau$ is given by
\begin{align*}
  \cStau = \exp{\tau/2 F}\exp{\tau G}\exp{\tau/2 F}.
\end{align*}
A second-order Taylor expansion of the nonlinear flow in $\cStau$ gives the representation
\begin{equation}\label{eq:StrangScheme}
\begin{aligned}
\cStau  P &= 
  \exp{\tau\cA} P + \int_0^{\tau} \exp{(\tau-s)\cA} Q \, \ds + \tau \exp{\tau/2\cA} (G + \tau M)\exp{\tau/2F}P \\
         &\quad+ \exp{\tau/2\cA} \int_0^{\tau} \frac{(\tau-s)^2}{2}  \dddds \exp{sG} \exp{\tau/2 F} P \, \ds \\
    &=: S_0(P) + S_1 + S_2(P) +S_3(P)
\end{aligned}
\end{equation}
in $\cL(H)$, where $M$ is defined in Lemma~\ref{lemma:LipBounds}. Next, we reformulate the exact solution. As for the reformulation in the Lie case, we make use of the fact that the mapping $s \mapsto \exp{(\tau-s)\cA}GP(s)$ is in $\CHk{\infty}{[\eps, \tau - \eps]}$ and obtain the reformulation
\begin{equation*}
  \begin{aligned}
    P(\tau) &= P(\tau - \eps + \eps ) \\
    &= \exp{(\tau-\eps)\cA}P(\eps)
      + \int_{\eps}^{\tau} \exp{(\tau-s)\cA} \bigl( Q + GP(s) \bigr) \, \ds \\
    &= \exp{(\tau-\eps)\cA} P(\eps)
      + \int_{\eps}^{\tau} \exp{(\tau-s)\cA} Q  \, \ds  + \int_{\tau - \eps}^{\tau} \exp{(\tau-s)\cA} GP(s) \, \ds\\
      &\quad + \exp{\tau/2 \cA} GP(\tau/2) \int_{\eps}^{\tau-\eps} 1 \,\ds+ \ddr \Bigl ( \exp{(\tau-r)\cA} GP(r) \Bigr )\Bigl \vert_{r=\tau/2} \int_{\eps}^{\tau-\eps} (s-\tau/2)  \,\ds \\
      &\quad + \int_{\eps}^{\tau-\eps} \int_{\tau/2}^s  (s - r) \dddr \Bigl ( \exp{(\tau-r)\cA} GP(r) \Bigr ) \, \dr \ds \\
    & = I_0(\eps, P_0) + I_1(\eps) + I_2(\eps, P_0) + I_3(\eps, P_0)+ I_4(\eps, P_0)+ I_5(\eps, P_0).
  \end{aligned}
\end{equation*}
From this we find that for any $x \in H$ the first two terms match the expansion of the method, i.e.\
\begin{equation*}
\lim_{\eps \rightarrow 0} I_0(\eps, P_0) x = S_0(P_0)x\quad\text{and}\quad
\lim_{\eps \rightarrow 0} I_1(\eps) x = S_1 x.
\end{equation*}
The integrand of $I_2$  is uniformly bounded on $[\tau - \eps, \tau]$ and the integrand of $I_4$ is symmetric, which imply that
\begin{equation*}
  \lim_{\eps \rightarrow 0} I_j(\eps, P_0)x= 0 \quad \text{for } j = 2,4.
\end{equation*}
Finally, we note that 
\begin{equation*}
  I_3(P_0) = \lim_{\eps \rightarrow 0} I_3(\eps, P_0) = \tau \exp{\tau/2 \cA} GP(\tau/2).
\end{equation*}
Thus we conclude that $\lim_{\eps \rightarrow 0} I_5(\eps, P_0)x $ exists and equals $R(0,x)$ where
\begin{equation*}
  R(t,x) = \int_{0}^{\tau} \int_{\tau/2}^s (s - r) \dddr \Bigl ( \exp{(\tau-s)\cA} GP(t + r) \Bigr ) x \, \dr \ds.
\end{equation*}
This reformulation of the exact solution can be made not only at $t = 0$, but around any $t = k\tau \leq T - \tau$ resulting in
\begin{equation*}
  P((k+1)\tau)x = S_0(P(k\tau))x + S_1x +  I_3(P(k\tau))x + R(k\tau, x).
\end{equation*}
Throughout this section we once more choose a $\gamma>0$ such that 
\begin{equation*}
 \max\{\maxnorm{P}, \maxnorm{\dot{P}}\}\leq\gamma.
\end{equation*}
Before we prove consistency for the Strang splitting, we derive two preliminary results: 
\begin{lemma}\label{lemma:SgSBound}
  Let Assumptions~\ref{ass:operators} and~\ref{ass:Sregular} be fulfilled, then
 \begin{equation*}
    \normLHH{\exp{\tau A} S \exp{\tau A^*} - S} \leq C\tau, 
 \end{equation*}
  where $C = C(\normLHH{AS})$.
\end{lemma}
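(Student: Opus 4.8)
The plan is to reduce the whole estimate to a single operator bound, namely $\normLHH{(\exp{\tau A} - I)S} \le C\tau$, after which both halves of the symmetric difference follow with essentially no extra work. First I would split the difference as
\begin{equation*}
  \exp{\tau A} S \exp{\tau A^*} - S = (\exp{\tau A} - I)S\exp{\tau A^*} + S(\exp{\tau A^*} - I),
\end{equation*}
so that the left factor $(\exp{\tau A}-I)$ is isolated in the first term and the right factor $(\exp{\tau A^*}-I)$ in the second. The point of this particular split is that the second term is, up to taking adjoints, the same object as the naked bound $(\exp{\tau A}-I)S$: since $S = S^*$ by Assumption~\ref{ass:operators} and $\bigl(\exp{\tau A^*}\bigr)^* = \exp{\tau A}$ (established in the proof of Lemma~\ref{lemma:analyticSG}), we have $\bigl(S(\exp{\tau A^*} - I)\bigr)^* = (\exp{\tau A} - I)S$, whence $\normLHH{S(\exp{\tau A^*} - I)} = \normLHH{(\exp{\tau A} - I)S}$. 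The first term is handled by the submultiplicativity of the operator norm together with the uniform bound $\normLHH{\exp{\tau A^*}} \le C$ from Lemma~\ref{lemma:analyticSG}, giving $\normLHH{(\exp{\tau A}-I)S\exp{\tau A^*}} \le C\normLHH{(\exp{\tau A}-I)S}$.

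It therefore remains to prove $\normLHH{(\exp{\tau A}-I)S} \le C\normLHH{AS}\tau$. The key is that Assumption~\ref{ass:Sregular} forces $\operatorname{ran} S \subseteq \domain{A}$ with $AS$ bounded, which lets me apply the fundamental theorem of calculus for the semigroup to $Sx \in \domain{A}$:
\begin{equation*}
  (\exp{\tau A} - I)Sx = \int_0^{\tau} A\exp{sA}Sx \, \ds = \int_0^{\tau} \exp{sA}\,ASx \, \ds, \quad x \in H,
\end{equation*}
using the commutation $A\exp{sA} = \exp{sA}A$ on $\domain{A}$. Estimating under the integral with $\normLHH{\exp{sA}} \le C$ from Lemma~\ref{lemma:analyticSG} then yields $\normLHH{(\exp{\tau A}-I)S} \le C\normLHH{AS}\tau$. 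Combining this with the two reductions above gives the claimed bound with $C = C(\normLHH{AS})$. (As an alternative to the adjoint trick, the second term can be treated directly via $S(\exp{\tau A^*}-I)x = \int_0^{\tau}(AS)^*\exp{sA^*}x\,\ds$, using $SA^* = (AS)^*$ from Lemma~\ref{lemma:S}; I prefer the adjoint route since it avoids handling the adjoint semigroup and the unbounded $SA^*$ explicitly.)

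The only genuinely delicate point is justifying the integral representation of $(\exp{\tau A}-I)S$. This rests on $Sx$ lying in $\domain{A}$ for every $x$, which is exactly the content of $AS \in \cL(H)$ in Assumption~\ref{ass:Sregular}, and on the standard identity $\exp{\tau A}y - y = \int_0^\tau A\exp{sA}y\,\ds$ valid for $y \in \domain{A}$, after which moving the norm inside the integral is routine and the bound is uniform in $x$. Everything else is bookkeeping: the analyticity of the semigroup enters only through the elementary uniform bound $\normLHH{\exp{sA}} \le C$ on $[0,T]$, and no use of the singular estimate on $A\exp{sA}$ is needed, because the one application of $A$ has been absorbed into the bounded operator $AS$.
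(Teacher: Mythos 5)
Your proof is correct and rests on the same mechanism as the paper's: the fundamental theorem of calculus for the semigroup applied to vectors in $\domain{A}$, the commutation $A\exp{sA} = \exp{sA}A$, and the absorption of the single unbounded factor into the bounded operator $AS$. The only difference is bookkeeping: you telescope the difference into two pieces and dispose of the second by taking adjoints, whereas the paper differentiates $s \mapsto \exp{sA}S\exp{sA^*}x$ directly and invokes the boundedness of $SA^* = (AS)^*$ from Lemma~\ref{lemma:S} for the resulting second term.
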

\begin{proof}
  By expressing the difference as an integral,
  \begin{equation*}
  \begin{split}
      \norm{\bigl ( \exp{\tau A} S \exp{\tau A^*} - S \bigr ) x} \quad &=\norm[\bigg]{ \int_0^{\tau} A \exp{s A} S \exp{s A^*}x + \exp{s A} S A^* \exp{s A^*} x \, \ds}  \\
          &=\norm[\bigg]{ \int_0^{\tau} \exp{s A} AS \exp{s A^*}x + \exp{s A} S A^* \exp{s A^*} x \, \ds}\\
          &\leq  C \big ( \normLHH{AS} + \normLHH{SA^*}\big ) \tau \norm{x}.
  \end{split}
  \end{equation*}
  Note that commuting $A$ and $\exp{sA}$ is justified, since $AS$ is a bounded operator.
\end{proof}
\begin{lemma} \label{lemma:PSPderivative2}
Let Assumptions~\ref{ass:operators} to~\ref{ass:Sregular} be satisfied and let $P$ be the solution to~\eqref{eq:DREclassical}. 
Then, with $f$ as in Lemma~\ref{lemma:PSPderivative}, the equality 
\begin{equation*}
    \dddr \bigl(\exp{(\tau-r)\cA} GP(t+r)\bigr) = -\cA\exp{(\tau-r)\cA} f(t+r) + \exp{(\tau-r)\cA} \ddr f(t+r),
\end{equation*}
is valid for $r \in [0, \tau)$ and $t+r \in (0, T]$. Furthermore, 
\begin{equation*}
  \norm[\bigg]{\int_{0}^{\tau} \int_{\tau/2}^s ( s - r)  \exp{(\tau-r)\cA}\ddr f(t+r) x \, \dr \ds } \le C\tau^3\norm{x}
\end{equation*}
with a constant $C=C(\gamma, \normLHH{AS})$.
\end{lemma}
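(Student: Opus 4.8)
The plan is to obtain the second-derivative identity by differentiating once more the first-derivative identity already established in Lemma~\ref{lemma:PSPderivative}, which reads
\begin{equation*}
  \ddr\bigl(\exp{(\tau-r)\cA} GP(t+r)\bigr) = \exp{(\tau-r)\cA} f(t+r)
\end{equation*}
for $r\in[0,\tau)$ and $t+r\in(0,T]$. Writing $g(r)=\exp{(\tau-r)\cA}f(t+r)$ and splitting the increment as
\begin{equation*}
  g(r+h)-g(r) = \bigl[\exp{(\tau-r-h)\cA}-\exp{(\tau-r)\cA}\bigr]f(t+r+h) + \exp{(\tau-r)\cA}\bigl[f(t+r+h)-f(t+r)\bigr],
\end{equation*}
I would divide by $h$ and let $h\to 0$. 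The first bracket converges in $\cL(H)$ to $-\cA\exp{(\tau-r)\cA}f(t+r)$, using the semigroup derivative $\ddr\exp{(\tau-r)\cA}=-\cA\exp{(\tau-r)\cA}$ together with the $\cL(H)$-continuity of $f$, and the second to $\exp{(\tau-r)\cA}\ddr f(t+r)$. These limits are legitimate because $t+r>0$: by Lemma~\ref{lemma:existence}, $P$ and hence $f$ are smooth in the $\cL(H)$-norm on every interval $[\eps,T]$ with $\eps>0$, so $f(t+\cdot)$ is differentiable there; and because $\tau-r>0$, the analyticity of the semigroup generated by $\cA$ guarantees $\exp{(\tau-r)\cA}P\in\domain{\cA}$ for every $P\in\SH$, so the unbounded term $\cA\exp{(\tau-r)\cA}f(t+r)$ is well defined. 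This yields the first claim.

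For the bound, I would compute $\ddr f$ by differentiating~\eqref{eq:f}. Every resulting summand contains exactly one factor $\dot P(t+r)$, with the remaining factors drawn from $P(t+r)$, $Q$, $S$, and $AS+SA^*$. Hence, using $\maxnorm{P}\le\gamma$, $\maxnorm{\dot P}\le\gamma$, the bound $\normLHH{AS+SA^*}\le C$ from Lemma~\ref{lemma:S}, and the fixed norms of $Q$ and $S$, one obtains the uniform estimate $\normLHH{\ddr f(t+r)}\le C(\gamma,\normLHH{AS})$ for all admissible $r$. Lemma~\ref{lemma:analyticSG} then gives $\normLHH{\exp{(\tau-r)\cA}\ddr f(t+r)}\le C(\gamma,\normLHH{AS})$, so the integrand is dominated by $C\norm{x}$ independently of $r$. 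Pulling the norm inside the integral and using $\int_{\tau/2}^{s}|s-r|\,\dr=\tfrac12(s-\tau/2)^2$, the remaining scalar double integral is
\begin{equation*}
  \int_0^\tau \tfrac12(s-\tau/2)^2\,\ds = \tfrac{1}{24}\tau^3,
\end{equation*}
which produces the claimed $C\tau^3\norm{x}$ estimate.

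The main obstacle is not the algebra but the rigorous handling of the unbounded operator $\cA$ in the differentiation step: one must ensure that $\exp{(\tau-r)\cA}f(t+r)$ lies in $\domain{\cA}$ and that the increment quotient genuinely converges in $\cL(H)$-operator norm. This is precisely where the strict inequality $r<\tau$, and with it the analytic smoothing of the semigroup, is indispensable; at $r=\tau$ the term $\cA\exp{(\tau-r)\cA}f(t+r)$ would lose its meaning. By contrast, the explicit computation of $\ddr f$ and the final integration are routine, relying only on the uniform bound $\maxnorm{\dot P}\le\gamma$, which is available here because all three assumptions are in force throughout this section.
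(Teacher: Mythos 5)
Your proof is correct and follows essentially the same route as the paper: differentiate the identity from Lemma~\ref{lemma:PSPderivative}, using smoothness of $f$ in $\cL(H)$ away from $t=0$ and the analytic smoothing of $\exp{t\cA}$ for the $\cA$-term, then bound $\ddr f$ uniformly by $C(\gamma,\normLHH{AS})$ (each summand containing one factor $\dot P$) and integrate. The only cosmetic difference is that the paper explicitly splits the outer integral at $s=\tau/2$ because the inner integral reverses orientation there, whereas you absorb this into $|s-r|$ (strictly, $\int_{\tau/2}^{s}|s-r|\,\dr$ is $-\tfrac12(s-\tau/2)^2$ for $s<\tau/2$, so one should take the absolute value of the inner integral); the resulting $\tau^3$ bound is the same.
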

\begin{proof}
As $0 \le r < \tau$, $0<t+r \leq T$ and Assumption~\ref{ass:Sregular} is fulfilled, Lemma~\ref{lemma:PSPderivative} gives 
  \begin{equation*}
    \ddr \Big(\exp{(\tau-r)\cA} GP(t+r)\Big) = \exp{(\tau-r)\cA} f(t+r).
  \end{equation*}
Since $f$ is an element of $\CHk{\infty}{[\eps, T]}$, as it is a product of elements in $\cL(H)$ and the solution $P\in\CHk{\infty}{[\eps, T]}$, we obtain 
  \begin{equation*}
    \ddr \exp{(\tau-r)\cA} f(r) = -\cA\exp{(\tau-r)\cA} f(r) + \exp{(\tau-r) \cA} \ddr f(r),
  \end{equation*}
i.e.\ the desired equality. Writing out the last term yields the identity 
  \begin{equation*}
    \begin{aligned}
      \ddr f(t+r)=&-QS\dot{P} - \dot{P}SQ + 2 (\dot{P}SPSP + PS\dot{P}SP + PSPS\dot{P} ) \\
                &- \dot{P} (AS + SA^*) P - P(AS + SA^*) \dot{P},
  \end{aligned}
  \end{equation*}
and the bound 
 \begin{equation}\label{eq:ftermbound}
  \sup_{r\in(0,\tau)}\norm[\Big]{ \exp{(\tau-r)\cA} \ddr f(t+r)}_{\cL(H)}\leq C\maxnorm[\Big]{ \ddr f}\leq C(\gamma, \normLHH{AS}).
 \end{equation}
 Hence, 
 \begin{equation*}
    \begin{aligned}
    \norm[\bigg]{\int_{0}^{\tau} 
    &\int_{\tau/2}^s (s - r)  \exp{(\tau-r)\cA}\ddr f(r) x \, \dr \ds} \\ 
    &\leq  \int_{0}^{\tau/2}\int_s^{\tau/2} (s - r)\norm[\Big]{\exp{(\tau-r)\cA} \ddr f(t+r)x} \, \dr \ds \\
    &\quad + \int_{\tau/2}^{\tau}\int_{\tau/2}^s (s - r) \norm[\Big]{\exp{(\tau-r)\cA} \ddr f(t+r)x }\, \dr \ds \\
    &\leq C \maxnorm[\Big]{\ddr f} \int_{0}^{\tau/2}\int_s^{\tau/2} (s - r) \, \dr \ds \norm{x}\\ 
    & \leq C(\gamma,\normLHH{AS}) \tau^3 \norm{x},
    \end{aligned}
 \end{equation*}
  and the desired inequality is shown. We note that the integral was split in two since the direction of integration in the inner integral changes when $s < \tau/2$.
\end{proof}
\begin{remark}
Note that the expression for the second derivative consists of two terms, and we only bound the second term. The first term will be handled separately in the proof of Lemma~\ref{lemma:StrangCons}.
\end{remark}
We can now prove consistency for the Strang splitting:
\begin{lemma}\label{lemma:StrangCons}
 Let Assumptions~\ref{ass:operators} to \ref{ass:Sregular} be fulfilled and let $P$ be the solution to~\eqref{eq:DREclassical}. If $\normLHH{\cStau^k P_0} \leq \gamma + 1$ for all $k = 0, \dots n$, then
 \begin{equation*}
    \normLHH{E_{n+1}} \leq D\tau^2(1 + \vert \log \tau \vert),
 \end{equation*}
  where $D = D(\gamma, \normLHH{AS})$.
\end{lemma}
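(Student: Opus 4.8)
The plan is to mirror the proof of Lemma~\ref{lemma:Lie1Cons}. Since the hypothesis $\normLHH{\cStau^k P_0}\le\gamma+1$ for $k\le n$ guarantees that every error $E_k = P(k\tau)-\cStau^k P_0$ lies in $\cL(H)$, I would subtract the scheme expansion~\eqref{eq:StrangScheme} from the reformulated exact solution. Using that the increment of the affine map $\exp{\tau/2 F}$ is $\exp{\tau/2\cA}E_k$ and that $S_0(E_k)=\exp{\tau\cA}E_k$, I add and subtract $S_2(P(k\tau))$ to split one step into a Lipschitz part $S_2(P(k\tau))-S_2(\cStau^k P_0)$, a middle-term defect $I_3(P(k\tau))-S_2(P(k\tau))$, the scheme remainder $S_3(\cStau^k P_0)$, and the exact remainder $R(k\tau,\cdot)$. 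Unrolling the recursion with $E_0=0$ expresses $E_{n+1}x$ as a sum over $k$ of $\exp{(n-k)\tau\cA}$ applied to these four contributions.

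Three of the four are essentially routine. The Lipschitz part gives the Grönwall term: the local Lipschitz bounds of Lemma~\ref{lemma:LipBounds} for $G$ and $M$, together with the bounded linear part of $\exp{\tau/2 F}$, yield $\normLHH{S_2(P(k\tau))-S_2(\cStau^k P_0)}\le C(\gamma)\tau\normLHH{E_k}$, so this sum is bounded by $C(\gamma)\tau\sum_k\normLHH{E_k}\norm{x}$. The scheme remainder is controlled by Lemma~\ref{lemma:NlinFlow}: the third $s$-derivative of $\exp{sG}$ applied to the bounded argument $\exp{\tau/2 F}\cStau^k P_0$ is bounded, so $\normLHH{S_3(\cStau^k P_0)}\le C\tau^3$ and the sum is $O(\tau^2)$. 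For the exact remainder I would split $R$ via Lemma~\ref{lemma:PSPderivative2}: the term $\exp{(\tau-r)\cA}\ddr f$ is bounded and contributes $O(\tau^3)$ per step by that lemma, hence $O(\tau^2)$ after summation, while the term $-\cA\exp{(\tau-r)\cA}f$ is handled exactly as $K_{31}$ in Lemma~\ref{lemma:Lie1Cons}, by commuting $\cA$ into the outer semigroup, treating the last summand separately, and using $\normLHH{\cA\exp{(n-k)\tau\cA}}\le C/((n-k)\tau)$ from Lemma~\ref{lemma:analyticSG}. Since $f$ is uniformly bounded, this produces a harmonic sum and hence the factor $\tau^2(1+\lvert\log\tau\rvert)$.

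The main obstacle is the middle-term defect, where I must show $\normLHH{I_3(P(k\tau))-S_2(P(k\tau))}\le C\tau^3$, i.e. that the bracket $GP(t_*+\tau/2)-(G+\tau M)\exp{\tau/2 F}P(t_*)$ with $t_*=k\tau$ is $O(\tau^2)$ in operator norm. This is genuine second-order consistency and cannot follow from a pointwise Taylor expansion, since $t\mapsto P(t)$ is not twice differentiable in $\cL(H)$. Instead I would set $Q_0=\exp{\tau/2 F}P(t_*)$ and use the exact Duhamel identity $P(t_*+\tau/2)=Q_0+V$ with $V=\int_0^{\tau/2}\exp{(\tau/2-s)\cA}GP(t_*+s)\,\ds$. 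Because $G$ is exactly quadratic, $GP(t_*+\tau/2)-GQ_0=-Q_0 S V-V S Q_0-V S V$ with no Taylor error, and $VSV=O(\tau^2)$. The crucial step is to evaluate $V$ and $Q_0$ to leading order \emph{in operator norm}: using that $\cA P$ is uniformly bounded under Assumption~\ref{ass:P0regular} (Lemma~\ref{lemma:regularity}) and that $\cA(PSP)$ is bounded by Lemmas~\ref{lemma:S} and~\ref{lemma:PSP}, together with the smoothing estimate of Lemma~\ref{lemma:SgSBound} for the factors adjacent to $S$, one obtains $Q_0=P(t_*)+O(\tau)$ and $V=\tfrac{\tau}{2}GP(t_*)+O(\tau^2)$. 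Substituting and using the algebraic identities $P(t_*)S\,GP(t_*)=GP(t_*)\,S P(t_*)=-MP(t_*)$, which hold since $GP=-PSP$, the surviving first-order terms cancel exactly against $-\tau MQ_0=-\tau MP(t_*)+O(\tau^2)$, leaving the required $O(\tau^2)$.

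Collecting the four estimates yields
\[
\normLHH{E_{n+1}}\le C(\gamma,\normLHH{AS})\Bigl(\tau\sum_{k=0}^{n}\normLHH{E_k}+\tau^2(1+\lvert\log\tau\rvert)\Bigr),
\]
and the discrete Grönwall inequality~\cite[Theorem~4.1]{Emmrich1999} then gives the claimed bound. I expect the bookkeeping in the middle-term defect to be the genuine crux: it is precisely here that both extra hypotheses are needed simultaneously, Assumption~\ref{ass:P0regular} to make the semigroup factors acting on $P(t_*)$ and $GP(t_*)$ norm-close to the identity, and Assumption~\ref{ass:Sregular} to absorb the factors adjacent to $S$, so that operator-norm second-order accuracy is recovered despite the lack of norm-continuity of $\exp{t\cA}$ at $t=0$.
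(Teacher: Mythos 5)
Your proposal is correct, and its overall architecture coincides with the paper's: the same error recursion with the four contributions (Lipschitz part, middle-term defect $I_3-S_2$, exact remainder $R$, scheme remainder $S_3$), the same bounds for $K_1$ and $K_4$, and the identical treatment of $R$ via Lemma~\ref{lemma:PSPderivative2}, with the $\cA\exp{(\tau-r)\cA}f$ part commuted into the outer semigroup, the $k=n$ summand handled separately, and the harmonic sum producing the $\tau^2(1+\vert\log\tau\vert)$ factor. The one place where you genuinely diverge is the middle-term defect. The paper Taylor-expands $s\mapsto G\bigl(\hat P+g(k\tau,s)\bigr)$ to second order: the remainder is controlled by $g_{rr}=\exp{(\tau/2-r)\cA}f$ from Lemma~\ref{lemma:PSPderivative}, and the leading term is cancelled against $2M\hat P$ by comparing $\exp{\tau/2\cA}(PSP)$ with $G\bigl(\exp{\tau/2\cA}P\bigr)$ through the sandwich identity and Lemma~\ref{lemma:SgSBound}; notably, this step never needs $\exp{\tau/2\cA}$ to be norm-close to the identity on $P$ or on $GP$. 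You instead exploit the exact quadraticity of $G$ (no Taylor remainder) and approximate $Q_0=P(t_*)+O(\tau)$ and $V=\tfrac{\tau}{2}GP(t_*)+O(\tau^2)$ in operator norm, which requires the uniform bounds on $\cA P$ (from Assumption~\ref{ass:P0regular} via Lemma~\ref{lemma:regularity}, or equivalently from the equation and $\maxnorm{\dot P}\le\gamma$) and on $\cA(PSP)$ (Lemmas~\ref{lemma:S} and~\ref{lemma:PSP}). Both routes are valid under the standing hypotheses and yield the same $O(\tau^3)$ per-step defect; the paper's version localizes the use of Assumption~\ref{ass:Sregular} in Lemma~\ref{lemma:SgSBound} and keeps all comparisons between propagated quantities, whereas yours leans more heavily on Assumption~\ref{ass:P0regular} to pull everything back to the unpropagated $P(t_*)$, at the cost of tracking a few extra $O(\tau)$ approximations of $V$ and $Q_0$ but with the benefit of avoiding the second-order Taylor remainder term $K_{22}$ altogether.
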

\begin{proof} 
 Comparing the approximation and exact solution we get
  \begin{equation*}
    \begin{aligned}
      E_{n+1}x 
      &= P((n+1)\tau)x - \cStau^{n+1} P_0 x \\
      &= \exp{\tau \cA} \bigl ( P(n\tau)  -  \cStau^n P_0 \bigr ) x\\
      &\quad+ \bigl ( S_2 (P(n\tau)) - S_2(\cStau^n P_0) \bigr )x \\
      &\quad + \bigl (I_3- S_2\bigr)(P(n\tau))x \\
      &\quad + R(n\tau,x) - S_3(\cStau^n(P_0))x.
    \end{aligned}
  \end{equation*}
Again we see that the first term is $\exp{\tau \cA} E_n x$  so this term can repeatedly be rewritten using the above equality, resulting in
the bound
 \begin{equation*}
    \begin{aligned}
    \norm{E_{n+1}x}
    &\leq C \sum_{k=1}^{n} \norm{ \bigl( S_2(P(k\tau)) - S_2(\cStau^{k}P_0) \bigr)x} \\
    &\quad + C \sum_{k=0}^{n} \norm{(I_3-S_2)(P(k\tau))x} \\
    &\quad + \norm[\bigg]{\sum_{k=0}^{n} \exp{(n-k)\tau \cA } R(k\tau,x) }+ C\sum_{k=0}^{n} \norm{S_3(\cStau^{k} P_0)x} \\
    & =: K_1 + K_2 + K_3 + K_4.
 \end{aligned}
  \end{equation*}
Applying Lemmas~\ref{lemma:analyticSG} and~\ref{lemma:LipBounds} yields
\begin{equation*}
K_1 \leq C(\gamma) \tau(1+ \tau)\sum_{k=1}^n\normLHH{E_n}\norm{x}.
\end{equation*}
The term $K_4$ is similarly straightforward to bound, since the hypothesis implies that 
\begin{equation*}
\normLHH{\exp{\tau/2 F} \cStau^k P_0  x} \leq C(\gamma)\quad\text{for }k = 0, \dots, n,
\end{equation*}
and Lemma~\ref{lemma:NlinFlow} then gives the bound
\begin{equation*}
    K_4 \leq C\sum_{k=0}^n  \maxnorm[\Big]{s \mapsto \dddds \exp{sG} \exp{\tau/2 F}\cStau^k P_0}\int_0^{\tau} \frac{(\tau-s)^2}{2}\,\ds\norm{x} 
           \leq  C(\gamma) \tau^2\norm{x}.
  \end{equation*}
Next, we bound the $K_2$ term, starting by only considering a single summand. With the shorthand 
\begin{equation*}
\hat{P}=\exp{\tau/2 F}P(k\tau)
\end{equation*}
we obtain
\begin{equation}\label{eq:strangdiff}
    \norm{(I_3-S_2)(P(k\tau))x}\leq C \tau \norm{ \bigl(GP((k+1/2)\tau)-G\hat{P}\bigr)x + \tau M\hat{P}x}.
\end{equation}
Introduce the function $g$ defined by
\begin{equation*}
    g(t,s) = \int_0^s \exp{(\tau/2-r)\cA} GP(t + r) \, \dr,
  \end{equation*}
  where $r \mapsto g(t,r) \in \CHk{\infty}{[\eps, \tau/2-\eps]}$ for any $\eps > 0$, with its first two derivatives being uniformly bounded by Lemma~\ref{lemma:PSPderivative}. This gives the representation $P((k+1/2)\tau) =\hat{P} + g(k\tau, \tau/2) $ and the equality
\begin{equation*}
  \begin{aligned}
&GP((k+1/2)\tau)x - G\hat{P}x =  \int_0^{\tau/2} \dds G(\hat{P} + g(k\tau, s) )x  \, \ds  \\
    &\quad= -\frac{\tau}{2} \bigl( g_s(k\tau, 0) S  \hat{P} + \hat{P} S g_s(k\tau,0) \bigr)x
    + \int_0^{\tau/2} \int_0^s \dddr G\bigl(\hat{P} + g(k\tau, r) \bigr)x \, \dr \ds,
  \end{aligned}
\end{equation*}
where $g_s=\partial g/\partial s$. Inserting this equality into~\eqref{eq:strangdiff} and summing up yields
\begin{equation*}
  \begin{aligned}
    K_2&\leq C \sum_{k=0}^{n} \tau^2\norm[\Big]{\bigl( g_s(k\tau,0) S  \hat{P} + \hat{P} S g_s(k\tau,0) - 2M\hat{P}\bigr) x} \\
    &\quad +  C \sum_{k=0}^{n} \tau \,\norm[\bigg]{\int_0^{\tau/2} \int_0^s \dddr G\big(\hat{P} + g(k\tau,r) \bigl) x \, \dr \ds}=:K_{21}+K_{22}.
  \end{aligned}
\end{equation*}
Now let $P$ be shorthand for $P(k\tau)$. For the individual $K_{21}$-summands, writing out $\hat{P} = \exp{\tau/2\cA}P + \int_0^{\tau/2} \exp{(\tau/2-s)\cA}Q \, \ds$ results in many terms, most of which contain the integral as a factor. Since all the other factors in these terms are uniformly bounded, they are all bounded by $C(\gamma)\tau$. We thus get
\begin{equation*}
  \begin{aligned}
   &\norm[\Big]{\bigl( g_s(k\tau,0) S  \hat{P}+ \hat{P} S g_s(k\tau,0) - 2M\hat{P}\bigr)x} \\
  &\quad= \norm{ \bigl ( \exp{\tau/2 \cA} (PSP) S \hat{P}x + \hat{P} S \exp{\tau/2 \cA} (PSP) - 2  \hat{P} S \hat{P} S \hat{P} \bigr ) x}\\
        &\quad\leq  \norm{\bigl ( G\big(\exp{\tau/2 \cA} P \big) S \exp{\tau/2 \cA} P
          - \exp{\tau/2 \cA} (GP) S \exp{\tau/2 \cA}P \bigr ) x} \\
        &\quad\quad + \norm{ \Bigl ( \bigl (\exp{\tau/2 \cA} P \bigr ) S G\big(\exp{\tau/2 \cA} P\big) 
        - \bigl (\exp{\tau/2 \cA} P\bigr) S \exp{\tau/2 \cA} (G P) \Bigr )x} + C(\gamma) \tau \\
        &\quad\le \tau C(\gamma, \normLHH{AS})\norm{x},
  \end{aligned}
\end{equation*}
where the last bound follows from Lemma~\ref{lemma:SgSBound}. 

Furthermore, the integrand of each $K_{22}$-summand can be bounded by 
  \begin{equation*}
      \begin{split}
        \norm[\Big]{&\dddr G\big(\hat{P} + g(k\tau,r) \bigl)}_{\cL(H)} \\
          &= \norm[\Big]{\bigl (\hat{P} + g(k\tau,r) \bigr) S \exp{(\tau/2-r)\cA}f(k\tau + r)
            + \exp{(\tau/2-r)\cA}f(k\tau + r) S \bigl ( \hat{P} + g(k\tau,r) \bigr ) \\
          &\qquad + 2 \exp{(\tau/2-r)\cA} \big(PSP\big) S \exp{(\tau/2-r)\cA} \big(PSP\big)}_{\cL(H)} \\
          &\leq 2C \normLHH{S}\bigl(\maxnorm{f} \sup_{r\in(0,\tau/2)}\norm{g(k\tau,r)} + \maxnorm{f}\normLHH{\hat{P}}+ \maxnorm{P}^4 \normLHH{S}^2\bigr),
      \end{split}
  \end{equation*}
since $ \frac{\partial^2g(t,r)}{\partial r^2} = \exp{(\tau/2-r)\cA}f(t + r)$. Applying the derived bounds on the summands of $K_{2j}$ gives
\begin{equation*}
   K_2\leq C(\gamma,\normLHH{AS}) \tau^2 \norm{x}.
\end{equation*}

Finally we bound $K_3$. By Lemma~\ref{lemma:PSPderivative2} we obtain that
\begin{equation*}
   K_3\leq K_{31}+K_{32}
\end{equation*}
where  
\begin{equation*}
    K_{3j} = \norm[\bigg]{\sum_{k=0}^{n} \exp{(n-k)\tau \cA }\int_{0}^{\tau} \int_{\tau/2}^s \lambda_j(k\tau,r) x \, \dr \ds}\quad\text{for} \quad j = 1,2,
\end{equation*}
with
\begin{equation*}
    \begin{aligned}
      \lambda_1(t,r) &= -(s - r)  \cA \exp{(\tau-r)\cA}f(t+r) \\
      \lambda_2(t,r) &= (s - r)  \exp{(\tau-r)\cA}\ddr f(t+r).
    \end{aligned}
\end{equation*}
Note that the improper integral $\int_0^{\tau} \int_{\tau/2}^s \lambda_1(k\tau, r)x \, \dr \ds$ is well defined since~$r\mapsto\lambda_2(t,r)$ is uniformly bounded by~\eqref{eq:ftermbound} and 
 \begin{equation*}
\lim_{\eps \rightarrow 0^+} \int_{\eps}^{\tau-\eps}\int_{\tau/2}^{s} \lambda_1(k\tau,r) x \, \dr \ds = R(t,x)-\int_{0}^{\tau} \int_{\tau/2}^s \lambda_2(k\tau,r) x \, \dr \ds.
\end{equation*}
From the bound in Lemma~\ref{lemma:PSPderivative2} we directly have
 \begin{equation*}
K_{32}\leq C\sum_{k=0}^{n} \norm[\bigg]{\int_{0}^{\tau} \int_{\tau/2}^s \lambda_2(k\tau,r) x \, \dr \ds}\leq C(\gamma, \normLHH{AS})\tau^2.
 \end{equation*}
For $K_{31}$ we proceed as in Section~\ref{subsec:Lie_ass2}, bounding the $k=n$ term separately:
\begin{equation*}
    \begin{aligned}
      \norm[\Big] {\int_{0}^{\tau} 
      &\int_{\tau/2}^s \lambda_1(n\tau,r) x \, \dr \ds} \leq \int_{0}^{\tau/2} \int_s^{\tau/2} \norm{\lambda_1(n\tau,r) x} \, \dr \ds \\
      &\quad + \int_{\tau/2}^{\tau} \int_{\tau/2}^s \norm{\lambda_1(n\tau,r) x} \, \dr \ds \\
      &\leq C \maxnorm{f} \Bigl ( \int_{0}^{\tau/2}  \int_s^{\tau/2} \frac{r-s}{\tau-r} \, \dr \ds 
        + \int_{\tau/2}^{\tau} \int_{\tau/2}^s \frac{s - r}{\tau-r} \, \dr \ds \Bigr ) \norm{x} \\
      &\leq C(\gamma, \normLHH{AS}) \int_{\tau/2}^{\tau} \int_{\tau/2}^s \frac{s-r}{\tau-r} \, \dr \ds \norm{x}= C(\gamma, \normLHH{AS}) \tau^2\norm{x}.
      \end{aligned}
\end{equation*}
For the rest of the sum we have the bound 
\begin{equation*}
    \begin{aligned}
    \norm[\Big]{\sum_{k=0}^{n-1}
    & \exp{(n-k)\tau  \cA} \int_0^{\tau} \int_{\tau/2}^s \lambda_1(k\tau,r) x \, \dr \ds} \\
    & \leq  \sum_{k=0}^{n-1} \int_0^{\tau/2} \int_s^{\tau/2} \norm{(s - r) \exp{(\tau-r)\cA}   \cA \exp{(n-k)\tau \cA}f(t+r) x} \, \dr \ds \\
    &\quad + \sum_{k=0}^{n-1}\int_{\tau/2}^{\tau} \int_{\tau/2}^s \norm{(s - r) \exp{(\tau-r)\cA} \cA \exp{(n-k)\tau \cA}f(t+r) x }\, \dr \ds\\
    &\leq C \maxnorm{f} \Big ( \sum_{k=0}^{n-1} \int_{\tau/2}^{\tau} \int_{\tau/2}^s \frac{s - r}{(n-k)\tau} \, \dr \ds \Big ) \norm{x} \\
    &\leq C(\gamma, \normLHH{AS}) \tau^3 \sum_{k=0}^{n-1} \frac{1}{(n-k)\tau} \norm{x}\\
    & \leq C(\gamma, \normLHH{AS}) \tau^2 (1 + \log n ) \norm{x}.
\end{aligned}
\end{equation*}
Combining the bounds for $K_j$ give \begin{equation*}
    \begin{aligned}
      \normLHH{E_{n+1}} &= \sup_{\norm{x} = 1} K_1 + K_2 + K_3 + K_4  \\
                        &\leq C(\gamma) \tau \sum_{k=1}^n \normLHH{E_k} + C(\gamma,\normLHH{AS}) \tau^2 (1 + \vert \log \tau \vert).
    \end{aligned}
  \end{equation*}
From this the desired bound is found by a discrete Grönwall inequality.
\end{proof}
The convergence analysis of the Strang splitting scheme now follows by an identical argument to the one in the proof of Theorem~\ref{thm:Lie1Bound}, using the bound provided by
Lemma~\ref{lemma:StrangCons} instead of Lemma~\ref{lemma:Lie1Cons}:
\begin{theorem}\label{thm:StrangBound}
  Let Assumptions~\ref{ass:operators} to \ref{ass:Sregular} be fulfilled and let $P$ be the solution to~\eqref{eq:DREclassical}. Then
  \begin{equation*}
    \normLHH{P(n\tau) - \cStau^n(P_0)}  \leq D\tau^2(1 + \vert \log \tau \vert)\quad \text{for all } \tau \leq \tau^*,
  \end{equation*}
  where the constants $D, \tau^*$ depend only on the solution $(\maxnorm{P}, \maxnorm{\dot{P}})$ and problem data $(T,\normLHH{S},  \normLHH{Q},\normLHH{AS})$.
\end{theorem}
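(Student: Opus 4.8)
The plan is to reproduce, almost verbatim, the bootstrapping argument from the proof of Theorem~\ref{thm:Lie1Bound}, with Lemma~\ref{lemma:StrangCons} taking the place of Lemma~\ref{lemma:Lie1Cons}. All of the genuine analytic work—the $\eps$-regularized Taylor expansion of the non-smooth solution near $t=0$, the rearrangement that moves every bare application of $\cA$ onto the bounded operator $AS + SA^*$, and the logarithmic estimate of the $K_{31}$-sum—has already been absorbed into Lemma~\ref{lemma:StrangCons}. The sole remaining task is therefore to remove the standing hypothesis $\normLHH{\cStau^k P_0}\le\gamma+1$, which that lemma assumes along the entire discrete trajectory. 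I would discharge this hypothesis by an induction on the step index $n$ that simultaneously controls the accumulated error and the operator norm of the numerical solution.

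Concretely, let $\gamma$ be chosen as in Section~\ref{sec:analysis_Strang} so that $\max\{\maxnorm{P},\maxnorm{\dot{P}}\}\le\gamma$, and let $D=D(\gamma,\normLHH{AS})$ be the consistency constant furnished by Lemma~\ref{lemma:StrangCons}. First I would fix $\tau^*>0$ small enough that $D\tau^2(1+\vert\log\tau\vert)\le 1$ for all $\tau\le\tau^*$; this is possible because $\tau^2(1+\vert\log\tau\vert)\to 0$ as $\tau\to 0^+$. Fixing such a $\tau$ and writing $E_k = P(k\tau) - \cStau^k P_0$, the induction hypothesis is that $\normLHH{E_k}\le D\tau^2(1+\vert\log\tau\vert)$ for $k=0,\dots,n$. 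The base case is immediate, since $E_0 = P_0 - P_0 = 0$.

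The key step is to check that the hypothesis of Lemma~\ref{lemma:StrangCons} is in force whenever the induction hypothesis holds. Using $\maxnorm{P}\le\gamma$ and the triangle inequality gives
\begin{equation*}
  \normLHH{\cStau^k P_0} \le \normLHH{P(k\tau)} + \normLHH{E_k} \le \gamma + D\tau^2(1+\vert\log\tau\vert) \le \gamma + 1
\end{equation*}
for every $k=0,\dots,n$, by the defining property of $\tau^*$. Lemma~\ref{lemma:StrangCons} is thus applicable and yields $\normLHH{E_{n+1}}\le D\tau^2(1+\vert\log\tau\vert)$, which closes the induction. Specializing to $N\tau = T$ produces the asserted bound for all $n$, and the dependence of $D$ and $\tau^*$ on $(\maxnorm{P},\maxnorm{\dot{P}})$ and on $(T,\normLHH{S},\normLHH{Q},\normLHH{AS})$ is precisely the one already carried through Lemma~\ref{lemma:StrangCons}.

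I do not expect a genuine obstacle at this stage; the argument is structurally identical to Theorem~\ref{thm:Lie1Bound}, and the only difference is the exponent $\tau^2$ in place of $\tau$ in the consistency estimate. The one point meriting mild care is purely a bookkeeping matter: the a priori radius $\gamma+1$ fed into Lemma~\ref{lemma:StrangCons} must be the same constant that determines $\gamma$ and hence $D$, so that the induction is self-consistent. This is automatic here because $\gamma$ bounds the \emph{exact} solution independently of $\tau$, while $D$ depends only on $\gamma$ and $\normLHH{AS}$; consequently shrinking $\tau^*$ never alters the constants appearing in the recursion, and the induction cannot drift.
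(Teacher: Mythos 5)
Your proposal is correct and is essentially identical to the paper's own argument: the paper also obtains Theorem~\ref{thm:StrangBound} by repeating the induction from the proof of Theorem~\ref{thm:Lie1Bound}, choosing $\tau^*$ so that $D\tau^2(1+\vert\log\tau\vert)\le 1$ and using Lemma~\ref{lemma:StrangCons} to both propagate the error bound and keep $\normLHH{\cStau^k P_0}\le\gamma+1$.
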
 
 
\section{Numerical experiments}\label{sec:experiments}
In order to illustrate the theoretical results, we now present several numerical experiments. They are all based on the connection between DREs and the optimal control of LQR problems mentioned in the introduction. We consider the problem of minimizing the cost functional 
\begin{equation}
  \label{eq:controlledHeat_J}
  J(u) = \int_{0}^{T}{\|y(t)\|^2 + \|u(t)\|^2 \, \dt} + \norm{Gx(T)}^2,
  \end{equation}
  subject to
  \begin{equation}  \label{eq:controlledHeat_system}
    \left\{
   \begin{aligned}
      \dot{x} &= A x + Bu  && \text{in } \Omega \\
      y &= Ex,
    \end{aligned}
    \right.
 \end{equation}
 where $\Omega = (0, 1)$. We set  $H = L^2(\Omega)$ and choose $A = \Delta$, the Laplacian with periodic boundary conditions, and $Ex = \int_{\Omega} x$, such that the problem is a controlled heat equation on the unit interval where we observe the mean of the state $x$.
 The control operator $B: \mathbb{R} \rightarrow H$ and the initial condition $P_0 = G^*G: H \rightarrow H$ will be chosen such that different combinations of Assumptions~\ref{ass:P0regular} and~\ref{ass:Sregular} are fulfilled.

A family of examples that fulfill Assumption~\ref{ass:Sregular} can easily be constructed by letting $B: \mathbb{R} \rightarrow H$ be given
by
\begin{equation*}
  Bu = u \rho
\end{equation*}
for any $\rho \in W \subset H$. The  adjoint $B^*: H \rightarrow  \mathbb{R} $ is given by
\begin{equation*}
  B^*v = \iprod{v, \rho}_H,
\end{equation*}
so $S = BB^* : H \rightarrow H$ satisfies
\begin{equation*}
  Sv = BB^*v = \iprod{\rho,v}_H \rho.
\end{equation*}
This means that $S$ actually maps into $W$. By choosing $W = \domain{A} = H^2 \cap H^1_{\text{per}}(\Omega)$, Assumption~\ref{ass:Sregular} is satisfied. Using the same construction but instead choosing $\rho \in H \setminus \domain{A}$ means that the assumption will not be satisfied.
Since $A = A^*$ in our experiments, the above procedure can also be used to construct a $P_0$ that either satisfies Assumption~\ref{ass:P0regular} or not, by simply replacing $B$ with $G^*$.

We generate the functions $\rho_1$ and $\rho_2$ in  $H^2 \cap H^1_{\text{per}}(\Omega)$ and $\xi_1$ and $\xi_2$ in $H^0(\Omega) = H$ by sampling two different Q-Wiener processes, as described in~\cite[Section 10.2]{LordPowellShardlow2014}.
Essentially, this means that we compute weighted sums of eigenfunctions to the Laplacian, where the weights are appropriately scaled random numbers. The functions are initially evaluated on a grid with $N_h = 2^{17}$ grid points and then projected onto each of the coarser grids. We choose different combinations of $\rho_j$ and $\xi_j$, $j = 1,2$, to construct $S$ and $P_0$ as described above, such that all combinations of fulfilling Assumption~\ref{ass:P0regular} and Assumption~\ref{ass:Sregular} are covered. These choices are listed in Table~\ref{tab:experiment_overview}, and the functions themselves are plotted in Figure~\ref{fig:rho_and_xi}.

\begin{remark}
  The operators $S = 0$ and $P_0 = 0$ clearly satisfy the assumptions. However, $S = 0$ is not a good choice for our purposes here, since it completely removes the nonlinearity in the DRE  and thus means that the splitting schemes are exact. Similarly, $P_0 = 0$ leads to an exact solution $P$ which is $\CHk{\infty}{[0,T]}$. This follows from a minor modification to the proof of Theorem IV-1.3.1 in~\cite{Bensoussan2007}. While it has not been rigorously proven in this paper, we expect that such high regularity will lead to optimal convergence orders for both schemes, regardless of any assumption on $S$. That is, Assumption~\ref{ass:Sregular} would not need to be satisfied for the Strang splitting to be of order $2$ when $P_0 = 0$. This is why we do not use this initial condition in the experiments and instead consider a more general one.
\end{remark}
\begin{table}
  \centering
\begin{tabular}{r|cccc}
  Experiment & $S$ & $P_0$ & Assumption~\ref{ass:P0regular} & Assumption~\ref{ass:Sregular} \\
  \hline{}
  1 & $\rho_1$ & $\rho_2$ & Yes & Yes \\
  2 & $\rho_1$ & $\xi_2$ & No & Yes \\
  3 & $\xi_1$ & $\rho_2$ & Yes & No \\
  4 & $\xi_1$ & $\xi_2$ & No & No 
\end{tabular}
\caption{Overview of the experiment setup.}
\label{tab:experiment_overview}
\end{table}

\begin{figure}
  \centering
  \includegraphics[width=\textwidth]{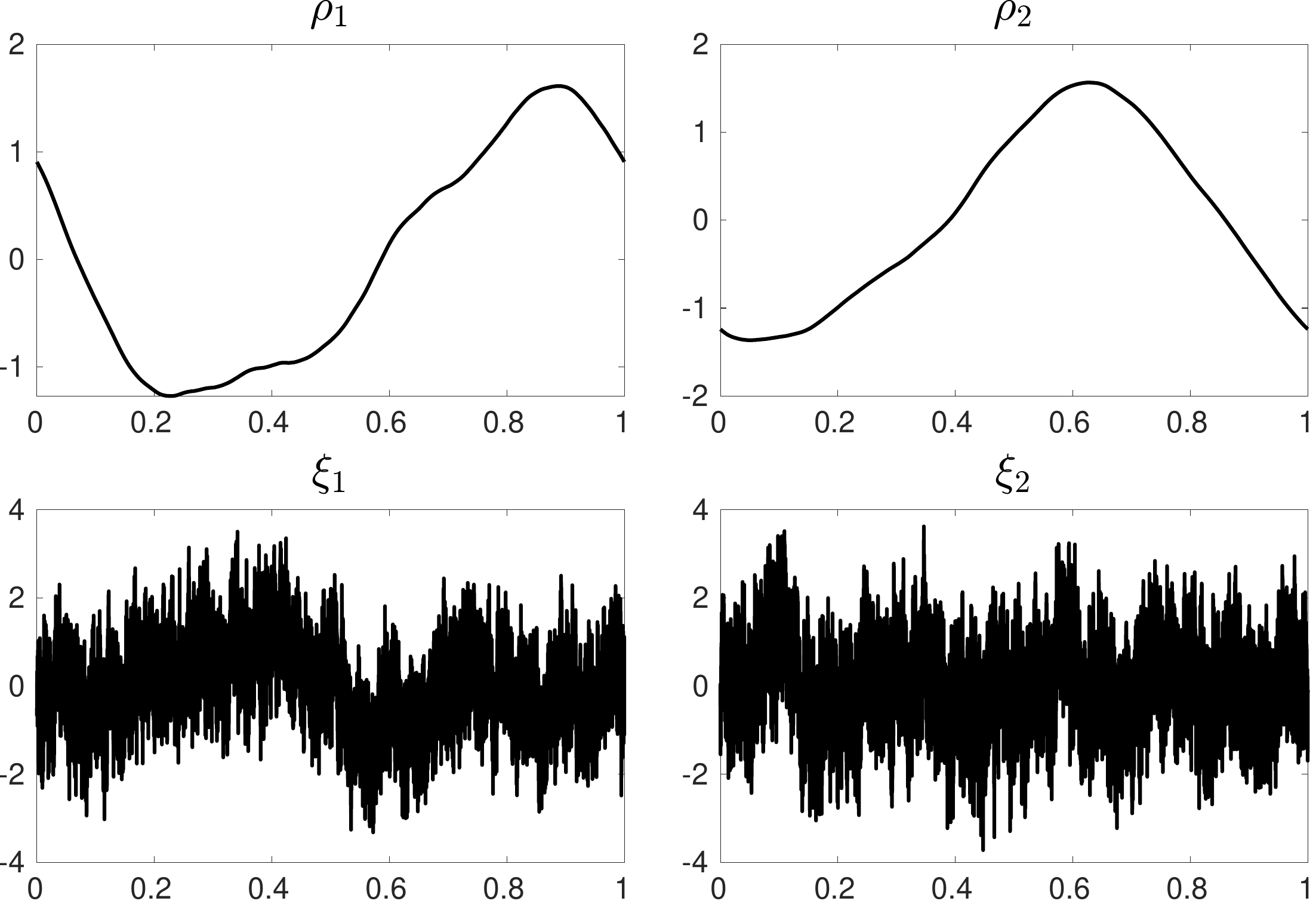}
  \caption{The functions $\rho_j \in H^2 \cap H^1_{\text{per}}(\Omega)$ and $\xi_j \in H$, $j = 1, 2$ used to generate the operators $P_0$ and $S$ in the experiments.}
  \label{fig:rho_and_xi}
\end{figure}

Since we cannot time-step the operator-valued DREs directly, we first discretize
in space using the finite element method (FEM) with piecewise linear basis functions $\phi_j$, $j=1, \ldots, N_h$. We denote the approximation space by $V_h := \operatorname{span}\big(\{\phi_j\}_{j=1}^{N_h}\big)$ and note that $V_h \subset V = H^1_{\text{per}}(\Omega) \subset H$. As described in detail in~\cite{MalqvistPerssonStillfjord2018}, the operator-valued DRE~\eqref{eq:DREclassical} then turns into the matrix-valued DRE
\begin{equation} \label{eq:DRE_matrix}
\Mb \dot{\Pb} \Mb = \Mb \Pb \Ab + \Ab^T \Pb \Mb + \Eb^T \Eb - \Mb \Pb \Bb \Bb^T \Pb \Mb,
\end{equation}
where the matrices $\Ab \in \R^{N_h \times N_h}$, $\Mb \in \R^{N_h \times N_h}$, $\Bb \in \R^{N_h \times 1}$, $\Eb \in \R^{1 \times N_h}$ are given by
$\Ab_{i,j} = \iprod{A\phi_j, \phi_i}$,
$\Mb_{i,j} = \iprod{\phi_j, \phi_i}$,
$\Bb_{i,j} = \iprod{B 1, \phi_i}$,
$\Eb_{i,j} = \iprod{E\phi_j, 1}$.
The matrix $\Pb(t) \in \R^{N_h \times N_h}$ is the matrix representation of the FEM-approximation $P_h(t) : V_h \to V_h$ to $P(t) : H \to H$. If $V_h \ni x = \sum_{j=1}^{N_h}{\xb_j \phi^h_j}$ with $\xb \in \R^{N_h}$ then
\begin{equation} \label{eq:Ph_formula2}
  P_h(t) x = \sum_{i,j=1}^N{\Pb_{i,j}(t)\iprod{x,\phi_j} \phi_i}.
\end{equation}
The initial value $P_h(0)$ is the projection of $P(0)$ onto $\cL(V_h, V_h)$, which satisfies
$P_h(0) = (\Id_h)^* P(0) \Id_h$ where $\Id_h : V_h \to V \subset H$ is the identity operator and $(\Id_h)^* : H \to V_h$ is the $H$-orthogonal projection onto $V_h$.

The natural extension of $P_h(t)$ to an operator on $H$ is given by $\Id_h P_h(t) (\Id_h)^*$, and the $\cL(H)$-norm of this operator is given by
\begin{equation*}
  \normLHH{\Id_h P_h(t) (\Id_h)^*} = \norm{\Lb_{\Mb}^T \Pb(t) \Lb_{\Mb}}_{\R^{N \times N}},
\end{equation*}
where $\Mb = \Lb_{\Mb} \Lb_{\Mb}^T$ is a Cholesky factorization of the mass matrix and $\norm{\cdot}_{\R^{N \times N}}$ denotes the standard spectral matrix norm.

In each experiment, we use several different spatial discretizations, corresponding to $N_h = 2^k$, $k = 2, \ldots, 14$. For each $N_h$, we apply the Lie and Strang splitting methods with $N_t = 2^j$ time steps for $j = 2, \ldots, 12$, corresponding to time steps of size $\tau = 2^{-j}$, until reaching $T = 0.1$. We use the low-rank-factored implementations of these schemes provided in the Matlab package DRESplit\footnote{Available from the corresponding author on request, or from \url{www.tonystillfjord.net}.} with the low-rank tolerance set to $10^{-15}$.

Denote the approximation of $P_h(n\tau)$ by $P_h^n$. The temporal errors for each $h$ are computed as
\begin{equation*}
  \textnormal{err}_{\tau} = \maxnorm{\Id_h (P_h^{\cdot} - P_{\tau,{\text{ref}}}^{\cdot})(\Id_h)^*}
\end{equation*}
where the reference approximation $P_{\tau,{\text{ref}}}$ is given by applying the Strang splitting approximation with time step $\tau_{\text{ref}} = 2^{-13}$ and discarding the intermediate steps which do not coincide with $n\tau$, $n = 1, \ldots, N_t$.

As $N_h$ increases, the spatial discretization gets finer, and the matrix-valued DRE tends to the corresponding operator-valued DRE as $N_h \to \infty$. For each fixed spatial discretization, we expect the time discretizations to converge with order $1$ and $2$, respectively. But when $N_h \to \infty$, this should break down, unless a relevant combination of Assumptions~\ref{ass:P0regular} and~\ref{ass:Sregular} are fulfilled. One way this can manifest in is that the largest $\tau$ for which the ``correct'' orders are observed decreases as $N_h \to \infty$.

\subsection{Experiment 1}
The errors are shown in Figure~\ref{fig:exp1}, and we observe temporal convergence with perfect order $1$ and $2$ for the Lie and Strang splittings, respectively, regardless of the spatial discretization. Since both Assumptions~\ref{ass:P0regular} and~\ref{ass:Sregular} are fulfilled, this is the expected result according to Theorems~\ref{thm:Lie1Bound}, \ref{thm:Lie2Bound} and~\ref{thm:StrangBound}.
\begin{figure}
  \centering
  \includegraphics[width=\textwidth]{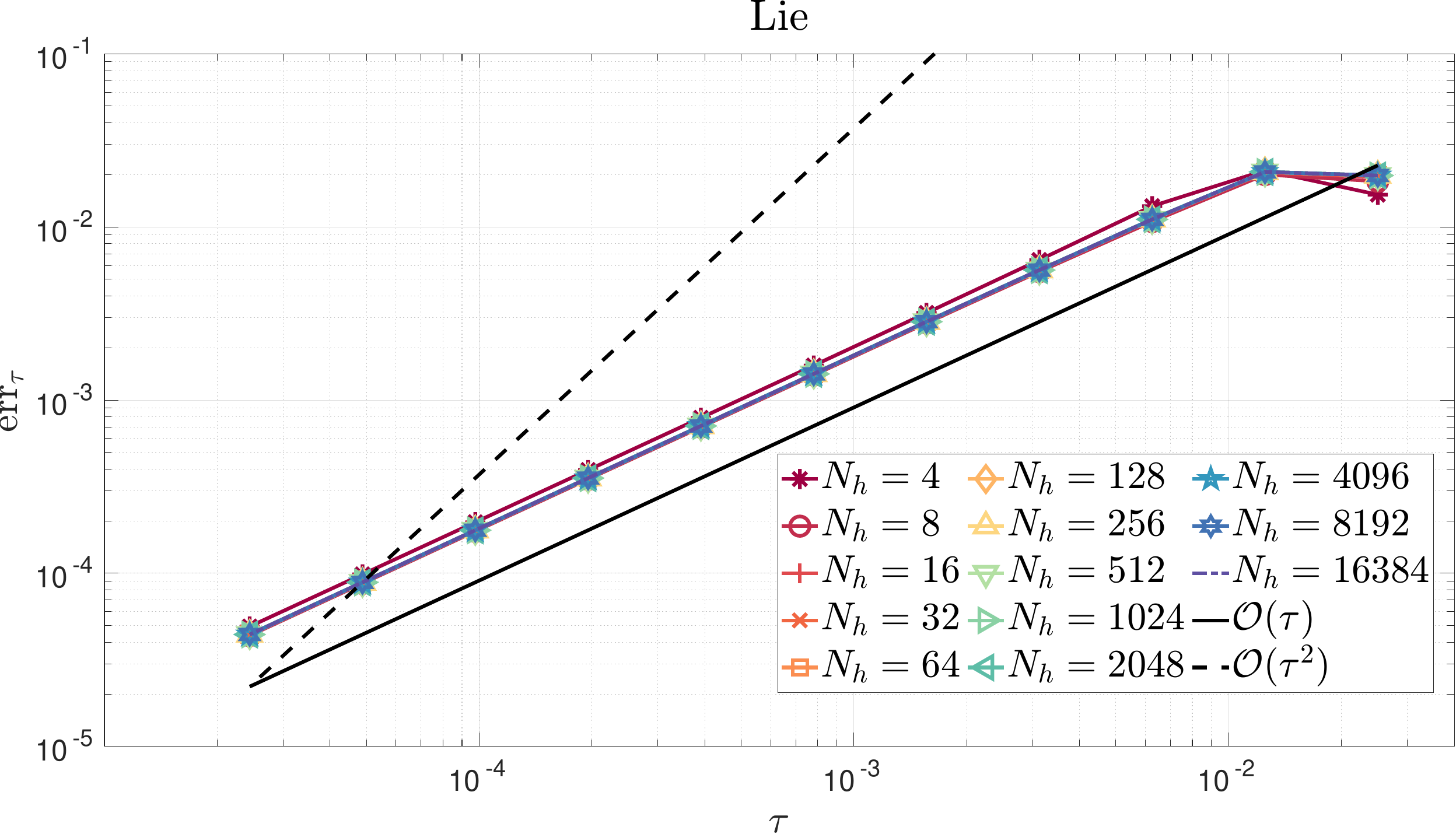}
  \includegraphics[width=\textwidth]{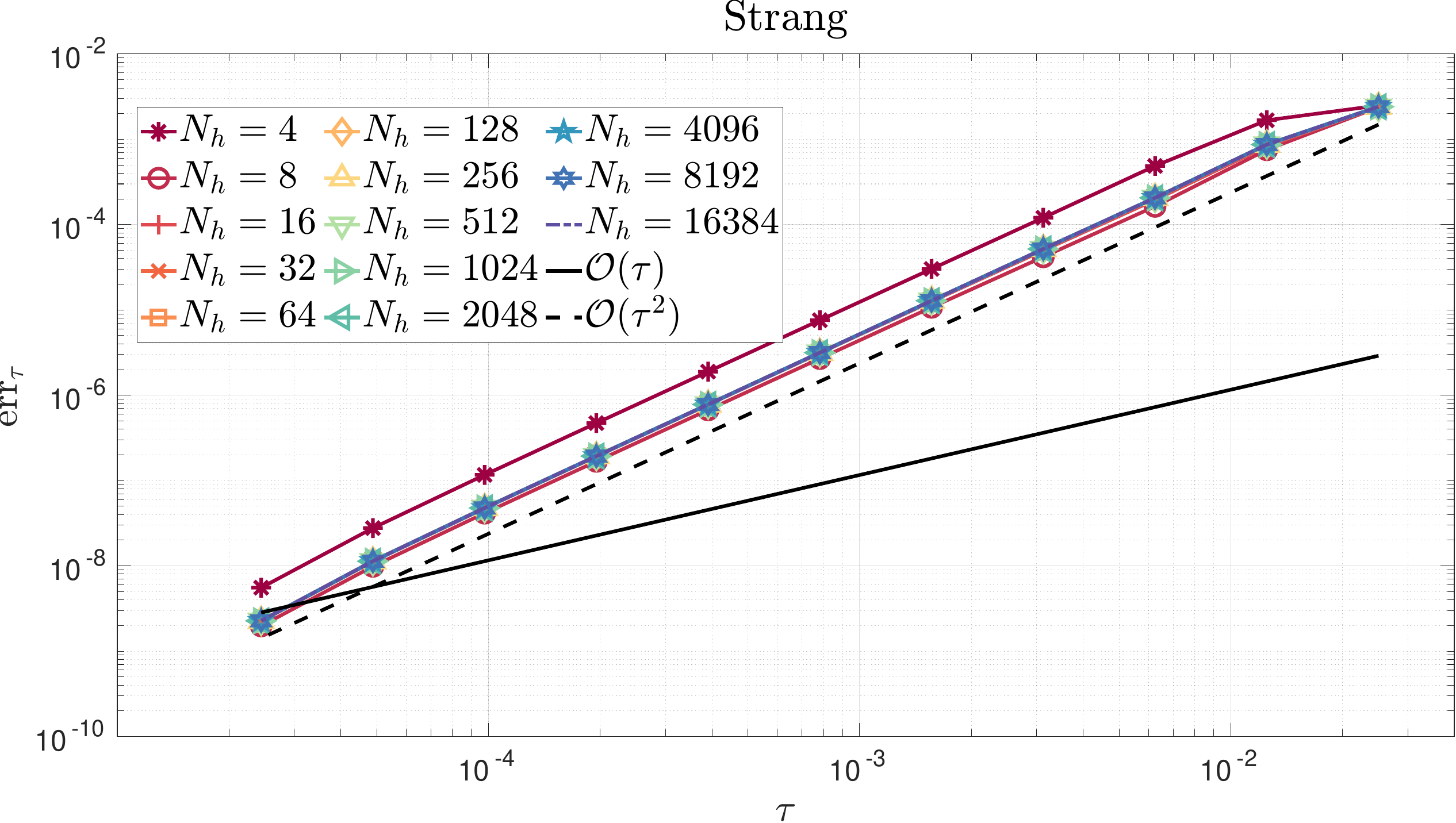}
  \caption{The errors $\textnormal{err}_{\tau}$ for Lie (top) and Strang (bottom) splitting in a setting when Assumptions~\ref{ass:P0regular} and~\ref{ass:Sregular} are both fulfilled. Each curve corresponds to a different spatial discretization with $N_h$ nodes.}
  \label{fig:exp1}
\end{figure}

\subsection{Experiment 2}
The errors are shown in Figure~\ref{fig:exp2}, and we again observe temporal convergence with essentially perfect order $1$ for the Lie splitting, regardless of the spatial discretization. This is in line with Theorem~\ref{thm:Lie2Bound}. For the Strang splitting, however, only the coarsest spatial discretization, corresponding to $N_h = 4$, yields a curve of slope $2$ for the whole interval of time steps. For finer spatial discretizations, the errors behave more like $\cO(\tau)$ until $\tau$ is small enough. Since only one of the assumptions required for Theorem~\ref{thm:StrangBound} is satisfied, this is the expected result. We also note that, in contrast to Experiment 1, the errors initially increase as $N_h$ increases. It is unclear why this increase stops around $N_h = 128$, but we conjecture that is is related to the fact that each spatial discretization employs its own reference approximation.

\begin{figure}
  \centering
  \includegraphics[width=\textwidth]{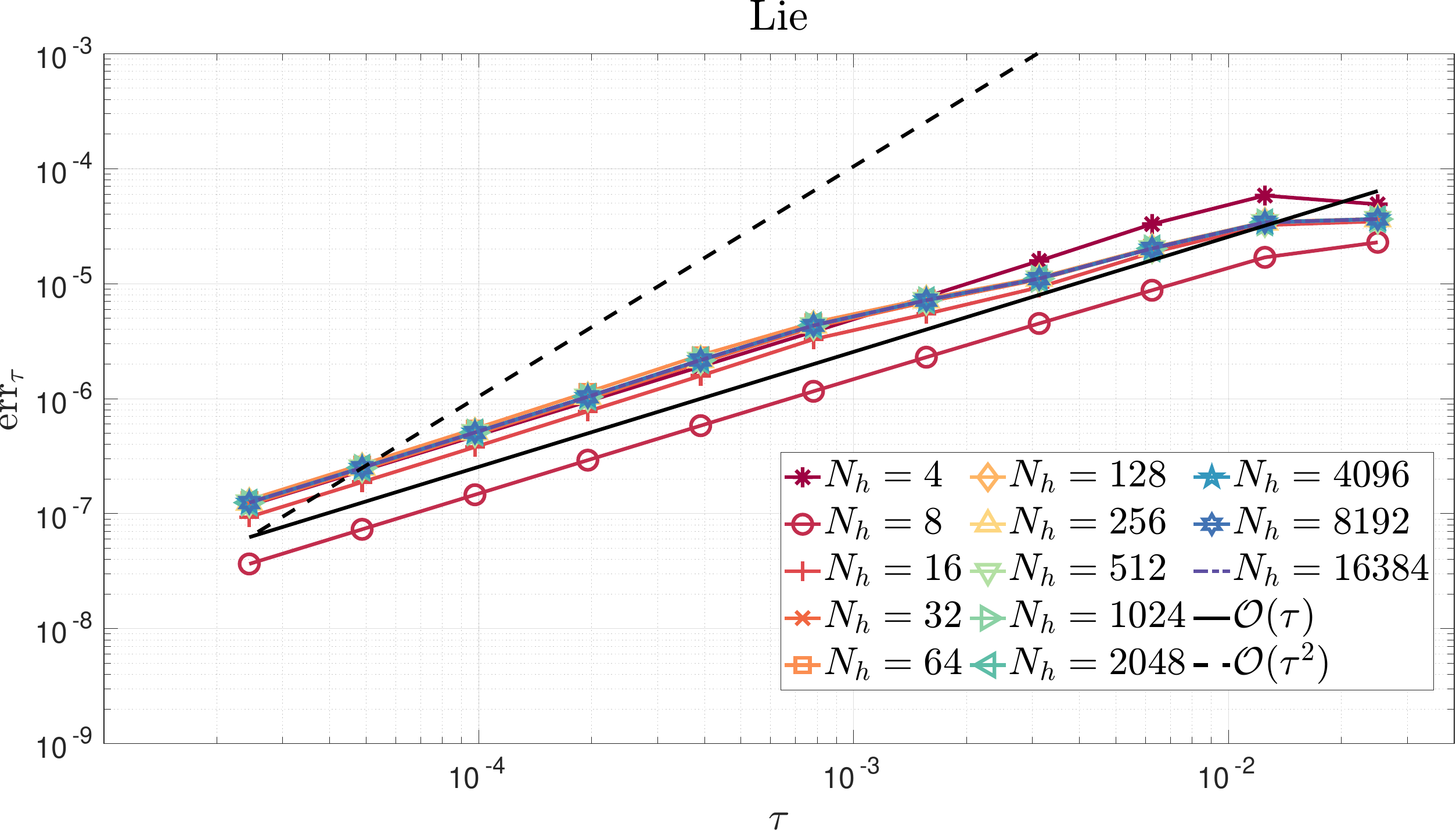}
  \includegraphics[width=\textwidth]{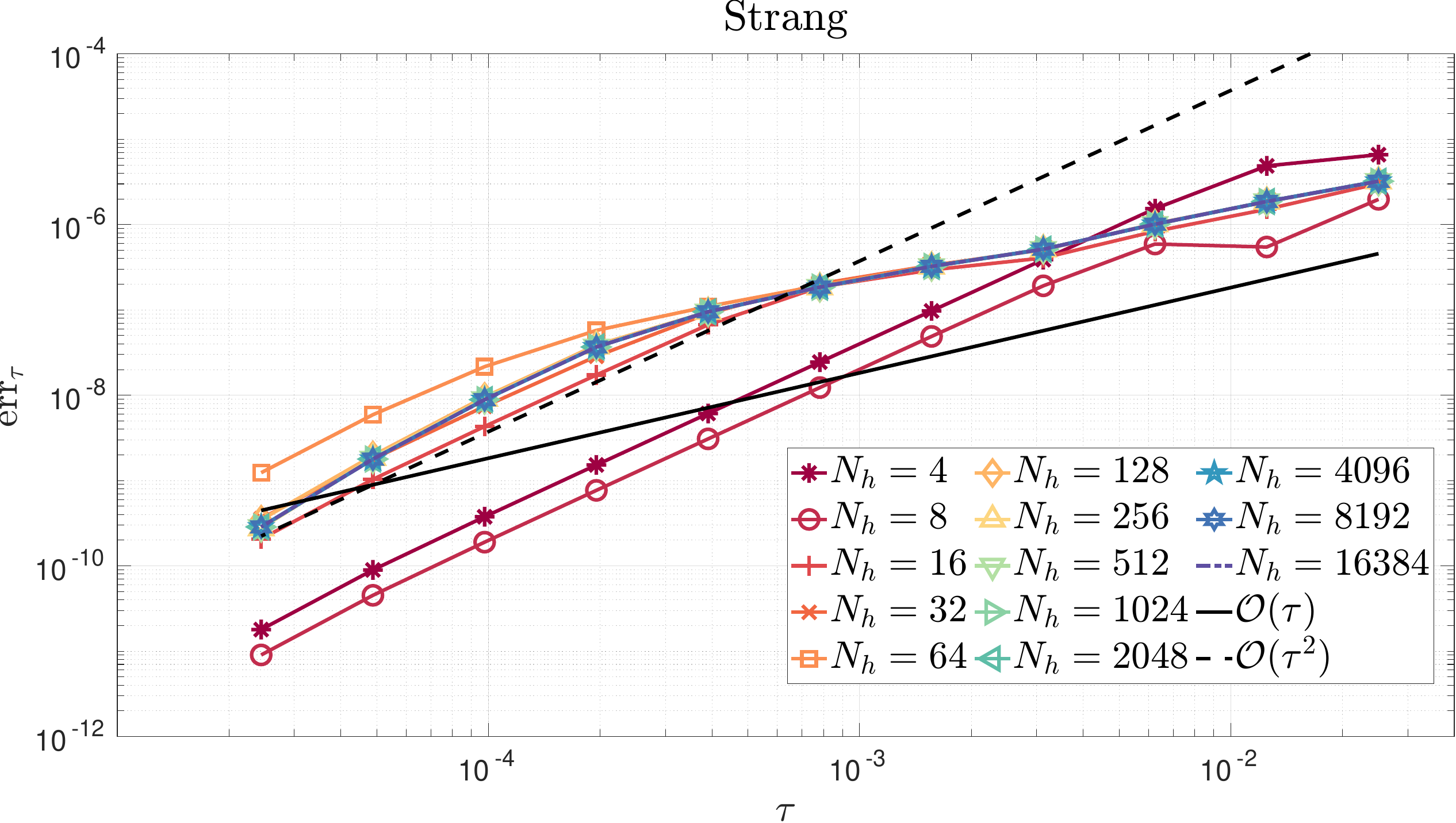}
  \caption{The errors $\textnormal{err}_{\tau}$ for Lie (top) and Strang (bottom) splitting in a setting when Assumption~\ref{ass:Sregular} is fulfilled but Assumption~\ref{ass:P0regular} is not. Each curve corresponds to a different spatial discretization with $N_h$ nodes.}
  \label{fig:exp2}
\end{figure}

\subsection{Experiment 3}
The errors in this experiment are shown in Figure~\ref{fig:exp3}, and they are quite similar to those in Experiment 1. Since Assumption~\ref{ass:P0regular} is satisfied, Theorem~\ref{thm:Lie1Bound} explains the Lie results. However, as Assumption~\ref{ass:Sregular} is not fulfilled, Theorem~\ref{thm:StrangBound} cannot be used to argue for why the Strang splitting performs so well. With this said, we have not proven that the given assumptions are necessary conditions, only sufficient. Better than expected convergence properties can occur in some cases, but this cannot be relied upon to always happen.

\begin{figure}
  \centering
  \includegraphics[width=\textwidth]{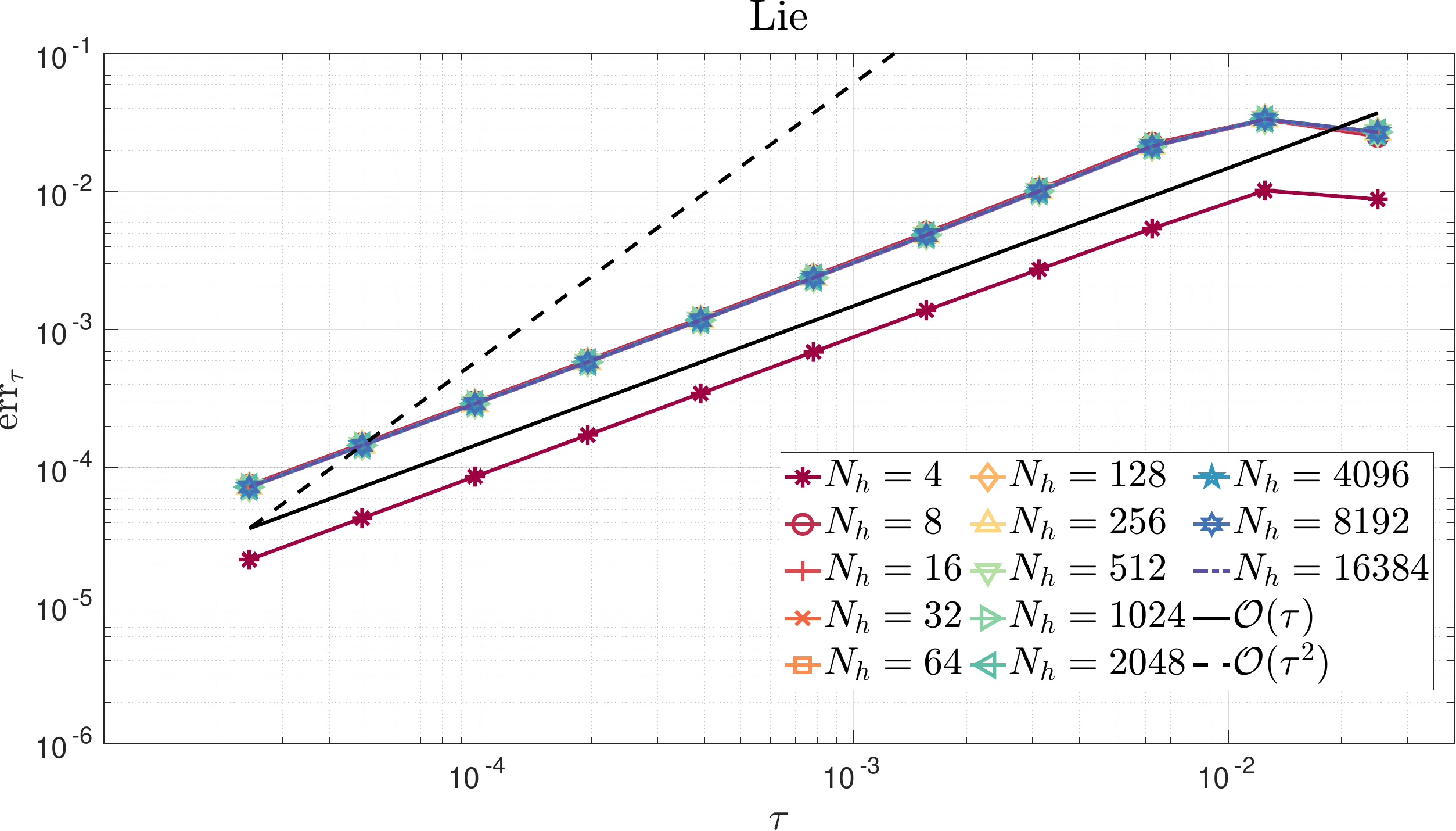}
  \includegraphics[width=\textwidth]{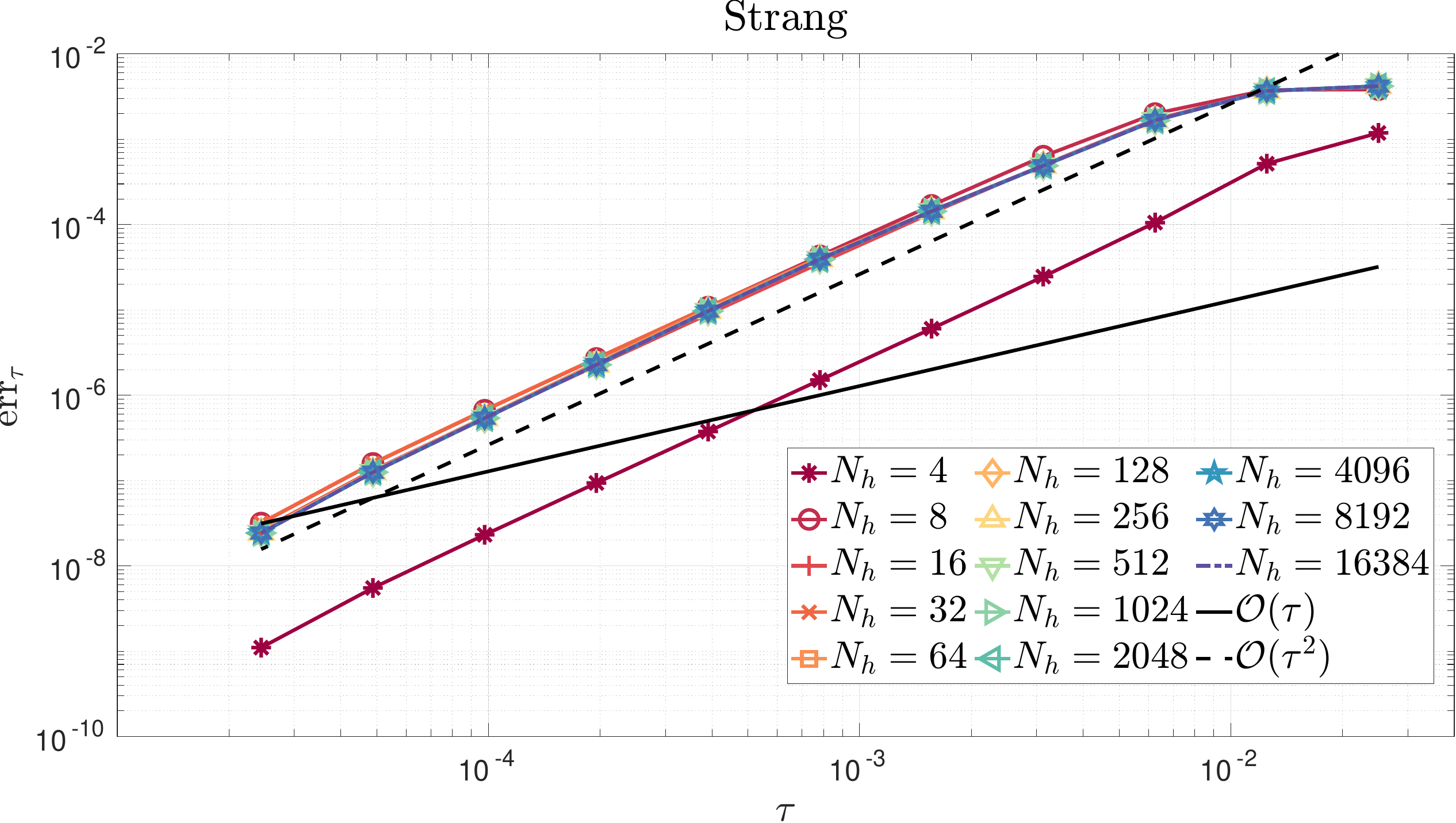}
    \caption{The errors $\textnormal{err}_{\tau}$ for Lie (top) and Strang (bottom) splitting in a setting when Assumption~\ref{ass:P0regular} is fulfilled but Assumption~\ref{ass:Sregular} is not. Each curve corresponds to a different spatial discretization with $N_h$ nodes.}
    \label{fig:exp3}
  \end{figure}

\subsection{Experiment 4}
In this final experiment, none of the assumptions are fulfilled, and the errors shown in Figure~\ref{fig:exp4} accordingly behave more erratically than in the other experiments. For Lie splitting, we observe convergence of order 1 for the coarsest spatial discretizations, but as $N_h$ increases this decays and the errors increase. The effect is seen even more clearly for Strang splitting, where convergence of order 2 is initially observed, but for each increase of $N_h$, the time step size after which this is observed becomes smaller. We also see more clearly how the errors increase as $N_h$ increases.

\begin{figure}
  \centering
  \includegraphics[width=\textwidth]{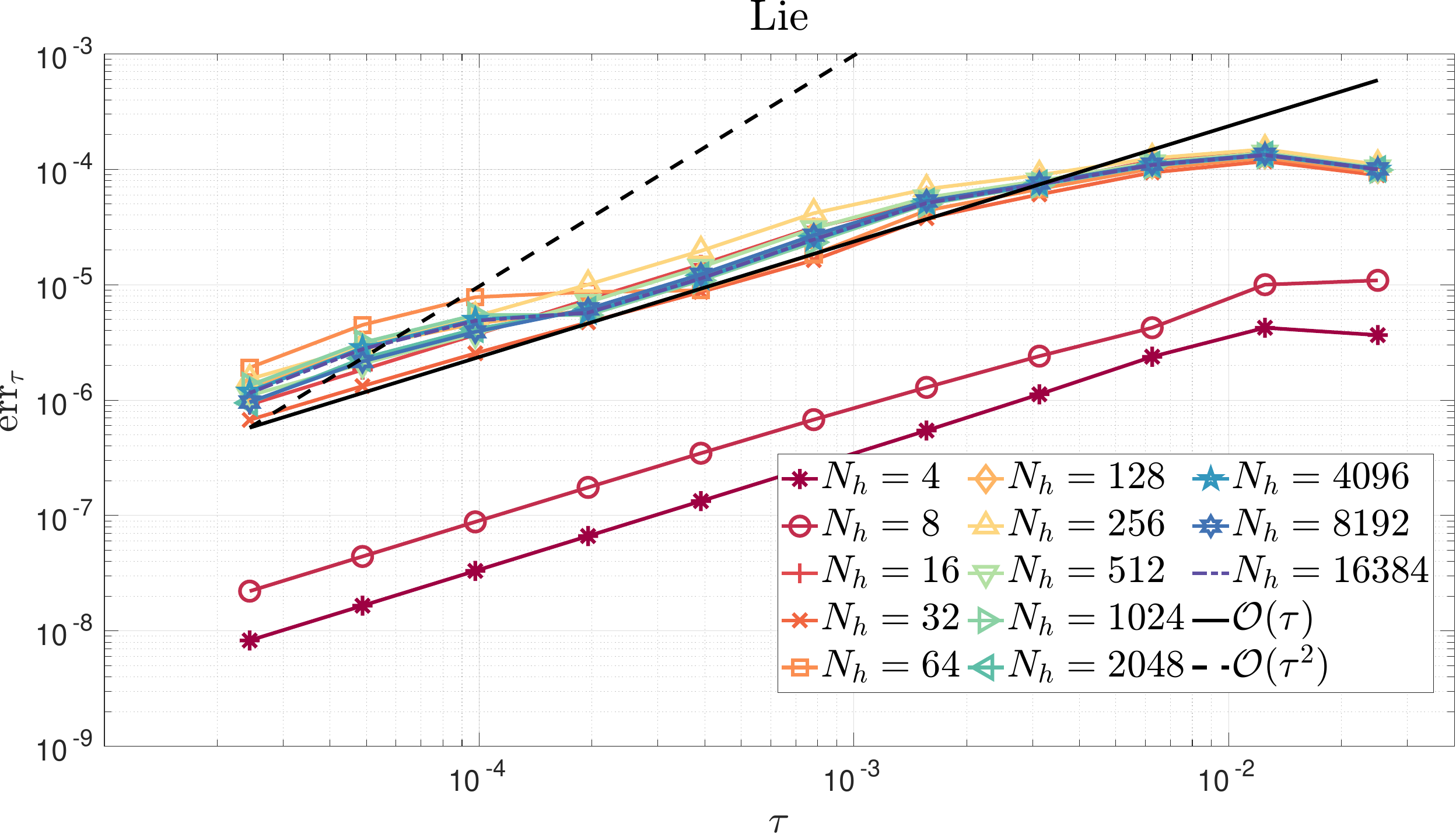}
  \includegraphics[width=\textwidth]{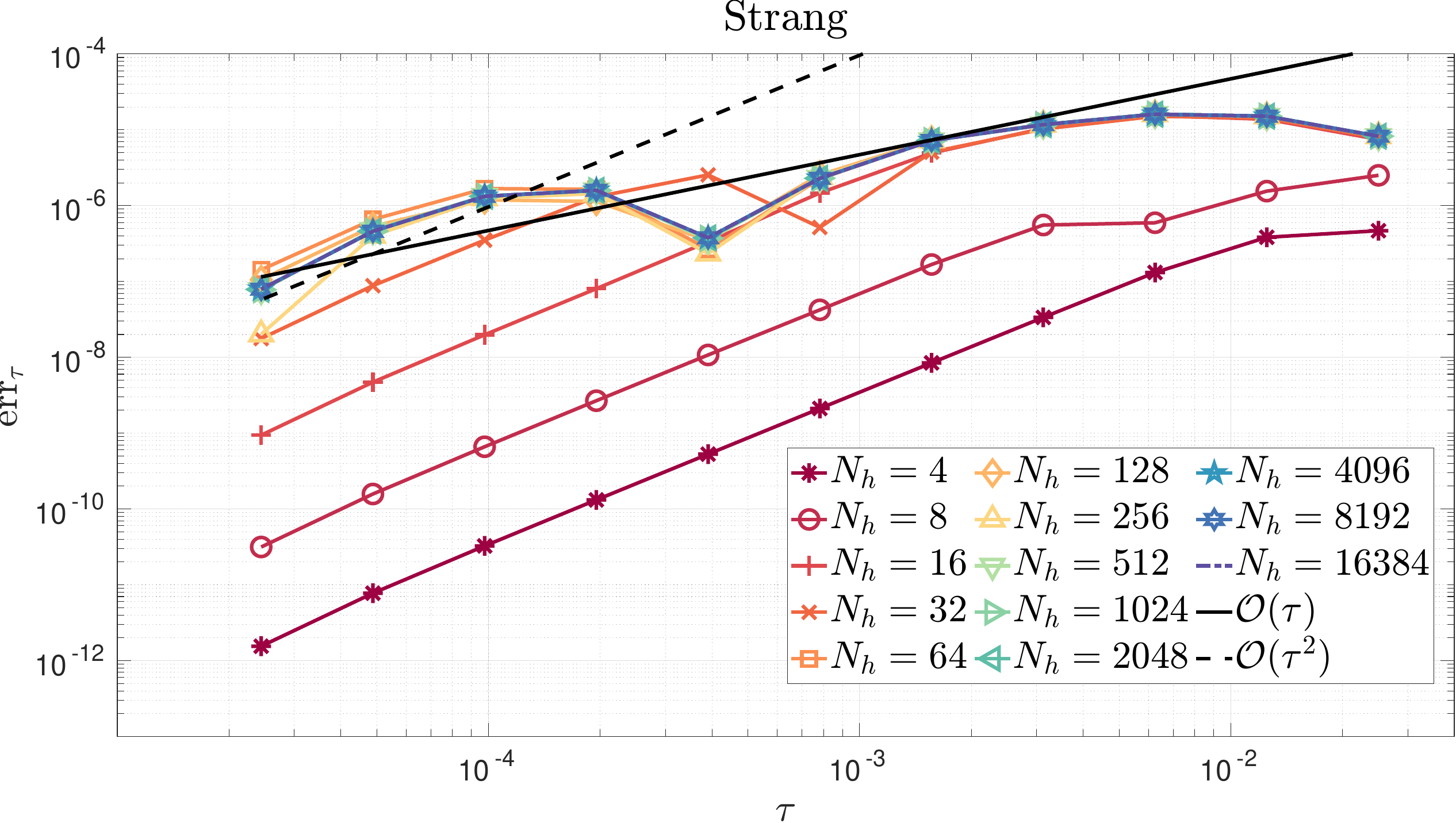}
  \caption{The errors $\textnormal{err}_{\tau}$ for Lie (top) and Strang (bottom) splitting in a setting when neither Assumption~\ref{ass:P0regular} nor Assumption~\ref{ass:Sregular} is fulfilled. Each curve corresponds to a different spatial discretization with $N_h$ nodes.}
  \label{fig:exp4}
  \end{figure}

\section{Conclusions}\label{sec:conclusions}
We have rigorously proved first- and second-order convergence of Lie and Strang splitting when applied to operator-valued DRE. These are the classical orders, which are to be expected for matrix-valued DREs under essentially no assumptions, since those are equivalent to systems of ODEs. However, as confirmed by the numerical experiments, if those matrix-valued equations are spatial discretizations of operator-valued equations then additional assumptions are necessary to avoid order reduction when the discretizations are refined. Our analysis relies on Assumption~\ref{ass:P0regular}  or Assumption~\ref{ass:Sregular}, either of which yield the requisite regularity of the exact solution. While we have not proven this, it seems unlikely that weaker assumptions would result in the full classical orders. However, we conjecture that assuming, e.g., $P_0 \in \domain{A^r}$ with $0 < r < 1$ should lead to a solution $P \in \CpHk{r}{[0,T]}$ and thence to convergence with order $r$ for Lie splitting. Investigating such modifications of the error analysis, as well as extensions to the case when $B$ and $E$ in the corresponding LQR problem are relatively unbounded operators, are topics for future work.

\section*{Acknowledgments}
The authors were partially supported by the Swedish Research Council under grants 2023-03982 (EH, TS, TÅ) and 2023-04862 (EH, TS).

\bibliography{refs}
\bibliographystyle{abbrvnat}

\end{document}